\pgfplotsset{compat=newest}
\definecolor{teal}{rgb}{0.0, 0.5, 0.5}
\newcounter{mnotecount}[section]
\newcommand{\rmnote}[1]{}
\DeclareFontFamily{U}{mathb}{\hyphenchar\font45}
\DeclareFontShape{U}{mathb}{m}{n}{
      <5> <6> <7> <8> <9> <10> gen * mathb
      <10.95> mathb10 <12> <14.4> <17.28> <20.74> <24.88> mathb12
      }{}
\DeclareSymbolFont{mathb}{U}{mathb}{m}{n}
\let\dot\relax
\DeclareMathAccent{\dot}{0}{mathb}{"39}
\let\ddot\relax
\DeclareMathAccent{\ddot}{0}{mathb}{"3A}
\let\dddot\relax
\DeclareMathAccent{\dddot}{0}{mathb}{"3B}
\let\ddddot\relax
\DeclareMathAccent{\ddddot}{0}{mathb}{"3C}
\theoremstyle{plain}
\newtheorem*{theorem*}{Theorem}
\newtheorem{theorem}{Theorem}[section]
\newtheorem*{lemma*}{Lemma}
\newtheorem{lemma}[theorem]{Lemma}
\newtheorem*{assumption*}{Assumption}
\newtheorem*{proposition*}{Proposition}
\newtheorem{proposition}[theorem]{Proposition}
\newtheorem*{corollary*}{Corollary}
\newtheorem{corollary}[theorem]{Corollary}
\newtheorem*{claim*}{Claim}
\newtheorem*{conjecture*}{Conjecture}
\newtheorem*{question*}{Question}
\newtheorem*{result*}{Result}
\theoremstyle{definition}
\newtheorem*{definition*}{Definition}
\newtheorem{definition}[theorem]{Definition}
\newtheorem*{example*}{Example}
\newtheorem*{algorithm*}{Algorithm}
\newtheorem*{remark*}{Remark}
\newtheorem*{remarks*}{Remarks}
\newtheorem{remark}[theorem]{Remark}
\newtheorem*{convention*}{Convention}
\newenvironment{hypo}[1]
{\par\smallskip\noindent\textbf{#1}\begin{itshape}}
{\end{itshape}\par\smallskip}
\numberwithin{equation}{section}
\def\al{\alpha}
\def\be{\beta}
\def\ga{\gamma}
\def\de{\delta}
\def\ep{\epsilon}
\def\ka{\kappa}
\def\la{\lambda}
\def\rh{\rho}
\def\si{\sigma}
\def\ta{\tau}
\def\vh{\varphi}
\def\ch{\chi}
\def\ps{\psi}
\def\om{\omega}
\def\Ga{\Gamma}
\def\De{\Delta}
\def\La{\Lambda}
\def\N{\mathbb{N}}
\def\R{\mathbb{R}}
\def\cP{\mathcal{P}}
\def\cT{\mathcal{T}}
\def\sD{\mathscr{D}}
\def\sS{\mathscr{S}}
\def\p{\partial}
\def\<{\langle}
\def\>{\rangle}
\renewcommand{\o}{\circ}
\def\supp{\on{supp}}
\def\ol{\overline}
\let\on=\operatorname
\newcommand{\sr}[1]%
{\ifmmode{}^\dagger\else${}^\dagger$\fi\ifvmode
\vbox to 0pt{\vss
 \hbox to 0pt{\hskip\hsize\hskip1em
 \vbox{\hsize3cm\raggedright\pretolerance10000
 \noindent #1\hfill}\hss}\vss}\else
 \vadjust{\vbox to0pt{\vss%
 \hbox to 0pt{\hskip\hsize\hskip1em%
 \vbox{\hsize3cm\raggedright\pretolerance10000%
 \noindent #1\hfill}\hss}\vss}}\fi%
}
\providecommand{\mapsfrom}{\kern.2em%
\setbox0=\hbox{$\leftarrow$\kern-.10em\rule[0.26mm]{0.1mm}{1.3mm}}\box0%
\kern.3em}
\title[Uniform extension of definable $C^{m,\om}$-Whitney jets]
{Uniform extension of definable $C^{m,\om}$-Whitney jets}
\author[Adam Parusi\'nski and  Armin Rainer]
{Adam Parusi\'nski and Armin Rainer}
\address {Adam Parusi\'nski: Universit\'e C\^ote d'Azur,  CNRS,  LJAD, UMR 7351, 06108 Nice, France}
\email{adam.parusinski@univ-cotedazur.fr}
\address{Armin Rainer: Fakult\"at f\"ur Mathematik, Universit\"at Wien,
Oskar-Morgenstern-Platz~1, A-1090 Wien, Austria}
\email{armin.rainer@univie.ac.at}
\begin{document}

\begin{abstract}
    We show that definable Whitney jets of class $C^{m,\om}$, 
    where $m$ is a nonnegative integer and $\om$ is a modulus of continuity,
    are the restrictions of definable $C^{m,\om}$-functions; ``definable'' refers to 
    an arbitrary given o-minimal expansion of the real field. 
    This is true in a uniform way: any definable bounded family of Whitney jets of class $C^{m,\om}$ 
    extends to a definable bounded family of $C^{m,\om}$-functions.
    We also discuss a uniform $C^m$-version and 
    how the extension depends on the modulus of continuity.
\end{abstract}

\thanks{This research was funded in whole or in part by the Austrian Science Fund (FWF) DOI 10.55776/P32905.
For open access purposes, the author has applied a CC BY public copyright license to any author-accepted manuscript version arising from this submission.
Additionally, the research was supported by Oberwolfach Research Fellows (OWRF) ID 2244p.}
\keywords{O-minimal structures, Whitney extension theorem, $C^{m,\om}$-extension of Whitney jets, uniform boundedness of the extension}
\subjclass[2020]{
    03C64,      
    14P10,      
    26B35,      
    26E25,      
    32B20,      
    46E15}      
\date{\today}

\maketitle


\section{Introduction}

Let an o-minimal expansion of the real field be fixed. 
Throughout the paper, a set $X \subseteq \R^n$ is called \emph{definable} if it is definable in this fixed o-minimal structure. 
A map $\vh : X \to \R^m$ is definable if its graph $\Ga(\vh):=\{(x,\vh(x)) : x \in X\}$ is a definable subset of $\R^n \times \R^m \cong \R^{n+m}$ (this natural identification is used throughout the paper). 
We assume familiarity with the basics of o-minimal structures; see \ \cite{vandenDriesMiller96} and \cite{vandenDries98}.

Due to Kurdyka and Paw{\l}ucki \cite{Kurdyka:1997ab,Kurdyka:2014aa} and 
Thamrongthanyalak \cite{Thamrongthanyalak:2017aa}
we have the definable $C^m$ Whitney extension theorem:

\begin{theorem} \label{thm:Cm}
    Let $0\le m\le p$ be integers. Let $E\subseteq \R^n$ be a definable closed set. 
    Any definable Whitney jet of class $C^m$ on $E$ extends to a definable $C^m$-function on $\R^n$
    which is of class $C^p$ outside $E$.  
\end{theorem}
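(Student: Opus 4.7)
The plan is to emulate the classical Whitney extension construction, replacing its non-definable ingredients by definable analogues. Classically one covers $\R^n\setminus E$ by a Whitney family of cubes $\{Q_k\}$ with diameters comparable to $\dist(Q_k,E)$, picks a nearest point $a_k\in E$ for each cube, and glues the Taylor polynomials $T^m_{a_k}F$ of the jet $F$ by a smooth partition of unity subordinate to the cover. Neither the infinite Whitney cover nor the standard bump functions are definable, so the strategy is to reorganise the construction around a \emph{finite} definable stratification of $\R^n\setminus E$ that plays the same geometric role.

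The central tool is a $\La$-regular definable $C^p$-cell decomposition of $\R^n$ compatible with $E$, in the sense of Kurdyka and Paw{\l}ucki, and of Thamrongthanyalak in the setting of an arbitrary o-minimal expansion of $\R$. Each open cell $C\subset \R^n\setminus E$ is a definable $C^p$-cell on which the distance to $E$ is well-behaved and any two points can be joined by a definable path of length comparable to their Euclidean distance. This yields a finite, definable substitute for the Whitney cover.

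On each such cell $C$ I would produce a definable $C^p$-selection $a_C\colon C\to E$ with $|x-a_C(x)|$ comparable to $\dist(x,E)$ (existence via definable choice together with a further cell refinement), and define the extension by $\tilde F(x):=T^m_{a_C(x)}F(x)$ for $x\in C$ and $\tilde F=F^0$ on $E$. The $\La$-regularity of $C$, combined with the Whitney jet condition on $F$, delivers the standard remainder estimate, so the partial derivatives $\d^\alpha \tilde F(x)$ with $|\alpha|\le m$ converge to $F^\alpha(a)$ as $x\to a\in \overline{C}\cap E$. To glue the cellwise extensions across cell boundaries one uses a definable $C^p$ partition of unity subordinate to a slight shrinking of the cells; such bump functions exist in an arbitrary o-minimal expansion by composing the definable distance to a cell boundary with a one-variable definable $C^p$ plateau function, itself produced by the $C^p$-cell decomposition theorem.

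The principal obstacle is to verify that the patched function is globally of class $C^m$ with the prescribed $m$-jet on $E$. This requires uniform control on how $a_C(x)$ and the polynomial $T^m_{a_C(x)}F$ oscillate as $x$ approaches $E$ through different cells, and this is precisely where $\La$-regularity is indispensable: it converts pointwise Whitney estimates into uniform ones by bounding the variation of definable functions on a cell by the Euclidean variation of the argument. Once the $C^m$-regularity across $E$ is secured, the additional $C^p$-smoothness on $\R^n\setminus E$ is automatic, as every ingredient---distances to cell boundaries, plateau functions, and the selections $a_C$---is of class $C^p$ by construction.
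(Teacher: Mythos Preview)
There is a genuine gap. On each cell $C\subseteq\R^n\setminus E$ you set $\tilde F(x)=T^m_{a_C(x)}F(x)=\sum_{|\alpha|\le m}\tfrac{1}{\alpha!}F^\alpha(a_C(x))\,(x-a_C(x))^\alpha$ with a \emph{varying} centre $a_C(x)$. For this to be of class $C^p$ on $C$ you need each composition $F^\alpha\circ a_C$ to be $C^p$; but the jet components $F^\alpha$ are only continuous functions on $E$ (for $|\alpha|=m$ no differentiability whatsoever is assumed), so $F^\alpha\circ a_C$ is merely continuous regardless of how smooth the selection $a_C$ is. In Whitney's classical construction the centre $a_k$ is \emph{constant} on each cube and $T^m_{a_k}F$ is a genuine polynomial; once the centre is allowed to move, the low regularity of the jet infects the extension. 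A second, related difficulty: your partition of unity is subordinate to a decomposition of $\R^n\setminus E$, so its derivatives blow up like $d(x,\p C)^{-|\be|}$ at cell boundaries, whereas the Whitney remainder estimate needed to patch $T^m_{a_C(x)}F$ and $T^m_{a_{C'}(x)}F$ across a common interface requires control by $d(x,E)^{-|\be|}$; cell boundaries of a finite decomposition can lie arbitrarily far from $E$.

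In the paper this theorem is quoted as a result of Kurdyka--Paw{\l}ucki and Thamrongthanyalak, and the paper's own proof of the $C^{m,\om}$-version (\Cref{thm:mainuniform}, specialising to \Cref{thm:main}) follows their scheme. The key difference from your plan is that one stratifies $E$ itself, not its complement, and runs an induction on $\dim E$. Using the inductive hypothesis one reduces to the case where the jet is supported on the closure of a single $k$-dimensional $\La_p$-cell $S=\Ga(\vh)$, the graph of a $\La_p$-regular map $\vh:T\to\R^{n-k}$, and is flat on $\p S$. After straightening $S$ to $T\times\{0\}$ the extension is the explicit Taylor expansion $(u,w)\mapsto\sum_\be\tfrac{1}{\be!}F^{(0,\be)}(u,0)\,w^\be$ in the normal variable $w$, cut off at scale $\min\{1,d(u,\p T)\}$. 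The required $C^p$-smoothness of $u\mapsto F^{(\al,\be)}(u,0)$ is then obtained on open substrata of $T$ via a further $\La_p$-stratification together with Gromov's inequality, and the lower-dimensional remainder is absorbed by induction. This reduction-to-a-graph by induction on $\dim E$ is precisely the mechanism your outline lacks.
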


We prove a $C^{m,\om}$-version of this result.

\begin{theorem} \label{thm:main}
    Let $0 \le m\le p$ be integers. Let $\om$ be a modulus of continuity.
    Let $E\subseteq \R^n$ be a definable closed set.
    Any definable Whitney jet of class $C^{m,\om}$ on $E$ extends to a definable $C^{m,\om}$-function on $\R^n$ 
    which is of class $C^p$ outside $E$.
\end{theorem}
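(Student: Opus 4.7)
The plan is to carry out the classical Whitney $C^{m,\om}$-extension construction, using definable replacements for each of its ingredients. Writing $U := \R^n \setminus E$, the starting point is a definable Whitney-type decomposition of $U$ into cells $Q_i$ of diameter comparable to $\dist(Q_i, E)$ (for instance, via a $\Lambda$-regular definable cell decomposition), together with a definable $C^p$-partition of unity $\{\psi_i\}$ subordinate to it, satisfying the standard estimates $|\partial^\alpha \psi_i(x)| \le C_\alpha\, \dist(x,E)^{-|\alpha|}$ for all $|\alpha| \le p$, and with only a bounded number of $\psi_i$ nonzero at any given point. To each $Q_i$ I would definably assign a point $a_i \in E$ with $|x - a_i| \sim \dist(x,E)$ for $x \in Q_i$.

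Given the Whitney jet $f = (f^\alpha)_{|\alpha|\le m}$ on $E$, define
\[
F(x) =
\begin{cases}
f^0(x), & x \in E,\\
\sum_i \psi_i(x)\, T^m_{a_i} f(x), & x \in U,
\end{cases}
\]
where $T^m_{a} f(x) := \sum_{|\alpha|\le m} \frac{f^\alpha(a)}{\alpha!}(x-a)^\alpha$. Since every ingredient is definable, so is $F$; and $F|_U$ is $C^p$ because each $\psi_i$ is.

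The remaining task is to verify that $F$ is of class $C^{m,\om}$ on $\R^n$ and that $\partial^\alpha F|_E = f^\alpha$. The latter, together with continuity of derivatives up to order $m$ across $E$, is obtained from the $C^m$-Whitney hypothesis exactly as in the classical Whitney extension (using $\sum_i \partial^\beta \psi_i = 0$ for $|\beta| \ge 1$ to cancel terms arising from derivatives landing on the $\psi_i$). The additional estimate
\[
|\partial^\alpha F(x) - \partial^\alpha F(y)| \le C\, \om(|x-y|) \qquad (|\alpha|=m)
\]
splits into three cases: $x,y \in E$, immediate from the $C^{m,\om}$-Whitney bound; $x,y \in U$, where, after differentiating the sum, the cross-terms reduce to Whitney remainders $R^\beta(a_i,a_j)$ controlled by the $C^{m,\om}$-Whitney bound together with $|a_i - a_j| \lesssim \dist(x,E) + |x-y|$ and the derivative estimates on $\psi_i$; and the mixed case $x \in E$, $y \in U$, handled by comparing $y$ to a nearest point of $E$ and invoking the two previous cases.

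The principal obstacle is securing the definable ingredients, most notably the definable $C^p$-partition of unity with the stated Whitney derivative estimates; this is precisely the technology underlying the proof of Theorem \ref{thm:Cm} in the work of Kurdyka--Paw\l{}ucki and Thamrongthanyalak. Once these definable tools are available, the passage from $C^m$ to $C^{m,\om}$ is a formal modification of Whitney's original calculation, independent of the specific o-minimal structure.
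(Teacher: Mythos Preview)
Your proposal has a genuine gap at its foundation. The classical Whitney construction requires a decomposition of $U=\R^n\setminus E$ into pieces $Q_i$ whose diameters are comparable to $d(Q_i,E)$; such a decomposition is necessarily infinite (the pieces must shrink as one approaches $E$), and an o-minimal structure does not admit a countably infinite discrete definable family of this kind. A \emph{finite} definable cell decomposition of $U$ exists, but its cells accumulate on $E$ and therefore cannot satisfy $\on{diam}(Q_i)\approx d(Q_i,E)$, so the Whitney remainder estimates you need (which rely on $|a_i-a_j|\lesssim d(x,E)+|x-y|$) break down. In short, there is no definable $C^p$-partition of unity subordinate to a Whitney-type cover with the estimates $|\p^\al\psi_i|\lesssim d(\cdot,E)^{-|\al|}$ that would make the infinite sum $\sum_i\psi_i\,T^m_{a_i}f$ definable. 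Your final paragraph misidentifies the content of the Kurdyka--Paw{\l}ucki and Thamrongthanyalak work: their proof of \Cref{thm:Cm} does \emph{not} produce such a partition of unity; it proceeds by $\La_p$-stratification of $E$ itself and induction on $\dim E$, an approach the paper explicitly describes as ``very different from Whitney's classical method''.

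The paper's proof of \Cref{thm:main} follows this stratification/induction route (in fact it proves the stronger uniform statement \Cref{thm:mainuniform}, from which \Cref{thm:main} is a special case). One stratifies $E$ into finitely many $\La_p$-cells, reduces via the induction hypothesis to the case where the jet is supported on the closure of a single cell $S=\Ga(\vh)$ and flat on $\p S$, straightens $S$ by the $\La_p$-regular map $\vh$, and then extends by an explicit formula involving a \emph{finite} product of cutoff functions built from the associated functions $\rh_j$ of the cell. The passage from $C^m$ to $C^{m,\om}$ is not the formal afterthought you suggest: controlling the modulus of continuity through these steps requires a $C^{m,\om}$-version of Gromov's inequality (\Cref{lem:Gromov2}) and careful estimates (Lemmas \ref{lem:uniformStep1.2} and \ref{lem:uniformStep1.1}) replacing the convolution operators used in Paw{\l}ucki's linear (but non-definable) extension.
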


By a \emph{modulus of continuity} we always mean a positive, continuous, increasing, 
and concave function $\om : (0,\infty) \to (0,\infty)$ 
such that $\om(t) \to 0$ as $t \to 0$.
We say that \emph{$\om$ is a modulus of continuity for a function $f : S \to \R$}, defined on a subset $S \subseteq \R^n$,
if there exists a constant $C>0$ such that  
\begin{equation} \label{eq:omHoelder}
    |f(x)-f(y)| \le C \, \om(|x-y|) \quad \text{ for all } x,y \in S.
\end{equation}
The class $C^{m,\om}$ consists of $C^m$-functions that are globally bounded together with its partial derivatives up to order $m$ 
and whose partial derivatives of order $m$ satisfy a global $\om$-H\"older condition of the type \eqref{eq:omHoelder}.
See \Cref{sec:jets}.

We use \Cref{thm:main} in \cite{ParusinskiRainer:2023ab} to show that a definable function 
$f : E \to \R$ on a definable closed 
set $E \subseteq \R^n$ that has a $C^{1,\om}$-extension to $\R^n$ also has a definable $C^{1,\om}$-extension. 
(In \cite{ParusinskiRainer:2023ab} we assume that $\om$ is definable, but not in the present paper.)
In fact, this application was one of our main motivations for proving \Cref{thm:main}.

We will show that 
the definable extension of Whitney jets of class $C^{m,\om}$ can be done in a bounded way:

\begin{theorem} \label{thm:mainuniform}
    Let $0 \le m\le p$ be integers. Let $\om$ be a modulus of continuity.
    Let $(E_a)_{a \in A}$ be a definable family of closed subsets of $\R^n$.
    For any definable bounded family $(F_a)_{a \in A}$ of Whitney jets of class $C^{m,\om}$ on $(E_a)_{a \in A}$ 
    there exists a definable bounded family $(f_a)_{a \in A}$ of $C^{m,\om}$-extensions to $\R^n$ of $(F_a)_{a \in A}$
    such that $f_a$ is of class $C^p$ outside $E_a$ for all $a \in A$.
\end{theorem}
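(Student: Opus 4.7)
The plan is to obtain Theorem~\ref{thm:mainuniform} by running the proof of Theorem~\ref{thm:main} parametrically in $a$. Set $\mathbb{E} := \{(x,a) \in \R^n \times A : x \in E_a\}$, a definable closed subset of $\R^n \times A$, and view $(F_a)_{a \in A}$ as a single definable object --- a Whitney jet on $\mathbb{E}$ in the $x$-direction, with $a$ as parameter. By hypothesis there is a constant $M>0$ such that $\|F_a\|_{C^{m,\om}(E_a)} \le M$ for every $a \in A$.

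Starting from a definable cell decomposition of $\R^n \times A$ compatible with $\mathbb{E}$ and with the projection onto $A$, and refining it via the Kurdyka--Paw{\l}ucki procedure, one obtains a $\Lambda_p$-regular decomposition of $(\R^n \times A) \setminus \mathbb{E}$ whose fibers over each $a$ give $\Lambda_p$-regular decompositions of $\R^n \setminus E_a$. A definable partition of unity subordinate to this parametric decomposition, inserted into the explicit interpolation formula used in the proof of Theorem~\ref{thm:main}, produces a definable function $f : \R^n \times A \to \R$. Its slices $f_a := f(\cdot, a)$ then form a definable family of extensions of $F_a$ which are of class $C^p$ off $E_a$. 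Definability is inherited slice-by-slice from the fact that every ingredient of the construction (the cells, the bumps, and the Taylor polynomials of $F_a$) depends definably on $(x,a)$.

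The main obstacle is the uniform $C^{m,\om}$-boundedness of $(f_a)$. The quantitative form of the extension construction yields estimates of the shape $\|f_a\|_{C^{m,\om}(\R^n)} \le C_a \cdot \|F_a\|_{C^{m,\om}(E_a)}$, where $C_a$ depends on the $\Lambda_p$-regularity constant of the fiber decomposition and on the $C^p$-norms of the bumps in the partition of unity. A priori these are only definable functions of $a$, which on a non-definably-compact $A$ need not be bounded. The crucial step is therefore to build the partition of unity so that its $C^p$-norms are governed only by the combinatorial type of the parametric cell decomposition of $\R^n \times A$, which takes only finitely many values across the family. Once this, together with uniform control of the $\Lambda_p$-regularity constant (obtained by further refining the cells if necessary so that the constant is determined by the combinatorial data), has been arranged, $C_a$ is bounded by a quantity $C = C(n,m,p,\om)$ independent of $a$, and the uniform bound $\|f_a\|_{C^{m,\om}(\R^n)} \le C \cdot M$ follows.
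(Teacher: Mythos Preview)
Your proposal misidentifies the structure of the proof of Theorem~\ref{thm:main}. You describe a $\Lambda_p$-regular decomposition of the \emph{complement} $(\R^n \times A)\setminus\mathbb{E}$, a subordinate partition of unity, and an ``interpolation formula'' --- this is the shape of Whitney's classical construction, which the paper explicitly avoids (see the introduction). The Kurdyka--Paw{\l}ucki method instead stratifies $E_a$ itself into $\Lambda_p$-cells and proceeds by induction on the dimension of the support of $F_a$; the extension from a single $k$-dimensional cell $\Ga(\vh_a)$ is built by explicit formulas involving the defining map $\vh_a$ and the associated functions $\rh_{a,j}$ of the cell (see \eqref{eq:cutoff}), not by interpolating Taylor polynomials over a covering of the complement. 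A Whitney-cube partition of unity on $\R^n\setminus E_a$ is in general infinite, and the resulting sum need not be definable.

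More seriously, the step you flag as ``crucial'' --- making the $\Lambda_p$-regularity constants uniform in $a$ --- is not a matter of ``further refining the cells if necessary so that the constant is determined by the combinatorial data.'' It is the content of a separate result, the uniform $\Lambda_p$-stratification (Theorem~\ref{thm:uniformLapstrat}), whose proof combines the uniform $L$-regular decomposition of \cite{KurdykaParusinski06} with a parametric Gromov-type inequality (Corollary~\ref{cor:La1top}) to upgrade $\Lambda_1$ to $\Lambda_p$ with constants independent of $a$. Once that is available, the induction on $\max_a\dim E'_a$ still requires a three-step reduction (Proposition~\ref{prop:afterred}: flat graph with $E'_a=E_a$; flat graph with $E'_a\subsetneq E_a$; general graph via pullback along $\vh_{a,+}$), and each step needs the $C^{m,\om}$-variant of Gromov's inequality (Lemma~\ref{lem:Gromov2}, Proposition~\ref{prop:uniformGromov4}) together with the cutoff Proposition~\ref{prop:uniformPawlucki39} to control the modulus $\om$. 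Your intuition that the constants should depend only on finite combinatorial data is correct, but the outline does not engage with any of the mechanisms that actually secure it.
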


Clearly, boundedness is understood with respect to the natural norms; see \Cref{sec:jets} for precise definitions.
\Cref{thm:main} follows as a special case from \Cref{thm:mainuniform}.
And already the case that $(F_a)_{a \in A}$ is a definable bounded family of Whitney jets of class $C^{m,\om}$ 
on a \emph{fixed set} $E=E_a$, for all $a \in A$, is very interesting.
However, the method of proof (by induction on dimension) 
necessitates to consider the general case that the families of Whitney jets are defined on 
definable families of sets $(E_a)_{a \in A}$.

The construction of the extension in \Cref{thm:mainuniform} depends on $\om$ only in a weak sense. 
We may, for instance, let the modulus of continuity $\om_a$ depend as well on $a \in A$ if we impose 
that there is a constant $C>0$ such that $C^{-1} \le \om_a(1) \le C$ for all $a \in A$.
This will be discussed in detail in \Cref{sec:omdep} in which we present a more general version of \Cref{thm:mainuniform}. 
As a consequence, we deduce from \Cref{thm:mainuniform} a uniform version of the $C^m$-result \Cref{thm:Cm} on compact sets:

\begin{theorem} \label{thm:Cmbd}
    Let $0 \le m\le p$ be integers. 
    Let $(E_a)_{a \in A}$ be a definable family of compact subsets of $\R^n$.
    For any definable bounded family $(F_a)_{a \in A}$ of Whitney jets of class $C^{m}$ on $(E_a)_{a \in A}$ 
    there exists a definable bounded family $(f_a)_{a \in A}$ of $C^{m}$-extensions to $\R^n$ of $(F_a)_{a \in A}$
    such that $f_a$ is of class $C^p$ outside $E_a$ for all $a \in A$.
\end{theorem}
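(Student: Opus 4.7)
The plan is to deduce \Cref{thm:Cmbd} from \Cref{thm:mainuniform}, in the more general form discussed in \Cref{sec:omdep} which permits the modulus of continuity $\om_a$ to depend on the parameter $a \in A$. The key intermediate step is to produce, starting from a definable bounded family of $C^m$-Whitney jets on a definable family of \emph{compact} sets, a definable family of moduli of continuity $(\om_a)_{a\in A}$ with respect to which the given family becomes a definable bounded family of $C^{m,\om_a}$-Whitney jets, satisfying the normalization $C^{-1} \le \om_a(1) \le C$ uniformly in $a$.

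First I would construct these moduli. For each $a$, compactness of $E_a$ together with the defining condition of a $C^m$-Whitney jet (the Taylor remainders $R_\al^{F_a}(x,y)$ are $o(|x-y|^{m-|\al|})$ as $|x-y|\to 0$, uniformly on $E_a$) imply that the definable function
\[
    \eta_a(t) := \max_{|\al| \le m}\,\sup\Big\{\tfrac{|R_\al^{F_a}(x,y)|}{|x-y|^{m-|\al|}} : x,y \in E_a,\; 0<|x-y|\le t\Big\}
\]
tends to $0$ as $t \to 0^+$, while the assumed $C^m$-boundedness of the family yields $\sup_{a \in A} \eta_a(t) < \infty$ for every fixed $t > 0$. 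Taking $\om_a$ to be the least concave majorant of $t \mapsto \eta_a(t) + t$ (capped by a fixed linear function at infinity if needed) produces a modulus of continuity in the sense of the paper, with $\om_a(1)$ bounded above and below by positive constants independent of $a$. By o-minimality $\om_a$ depends definably on $a$, so $(\om_a)_{a \in A}$ is a definable family.

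By construction $(F_a)_{a\in A}$ is then a definable bounded family of $C^{m,\om_a}$-Whitney jets on $(E_a)_{a\in A}$, and invoking the generalized form of \Cref{thm:mainuniform} from \Cref{sec:omdep} yields a definable bounded family $(f_a)_{a\in A}$ of $C^{m,\om_a}$-extensions to $\R^n$ that are of class $C^p$ outside $E_a$. Since each $f_a$ is in particular of class $C^m$ with its $C^m$-norm controlled by its $C^{m,\om_a}$-norm (using the uniform upper bound on $\om_a(1)$ and the uniform sup-bounds on the $F_a^\al$), the family $(f_a)_{a \in A}$ is the required definable bounded family of $C^m$-extensions.

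The main obstacle I anticipate is the construction and bookkeeping of the family $(\om_a)_{a \in A}$: the pointwise expression for $\eta_a(t)$ does not automatically produce a concave increasing function of $t$, so care is needed in passing to the least concave majorant while maintaining definability in $a$ and the uniform normalization at $t=1$. This is handled by o-minimal cell decomposition applied to the definable function $(a,t)\mapsto \eta_a(t)$, combined with the standard fact that the concave envelope of a bounded definable function is again definable. Note that compactness of the $E_a$ is essential here, as it is what forces each $C^m$-jet to possess a modulus of continuity at all.
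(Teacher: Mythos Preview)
Your approach is correct and essentially the same as the paper's: construct for each $a$ a modulus of continuity $\om_a$ from the remainder function $\eta_a$ (the paper calls it $\si_a$), take its least concave majorant after a normalization, and then invoke the version of \Cref{thm:mainuniform} with $a$-dependent moduli (\Cref{thm:omdep}). The paper achieves the lower bound $\om_a(1)\ge 1$ by replacing $\si_a(t)$ with $\max\{1,\si_a(t)\}$ for $t\ge 1$ rather than by adding $t$, but this is cosmetic.

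One simplification: you do not need definability of $(\om_a)_{a\in A}$. The hypothesis of \Cref{thm:omdep} only asks that $(E_a)_{a\in A}$ and $(F_a)_{a\in A}$ be definable and that the moduli satisfy the uniform normalization \eqref{eq:uniformom}; the family $(\om_a)_{a\in A}$ itself is allowed to be arbitrary. So the entire last paragraph about o-minimal cell decomposition and definability of the concave envelope can be dropped.
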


\Cref{thm:Cmbd} is proved in \Cref{sec:proofCm}. 
We deduce a local version of \Cref{thm:mainuniform} in \Cref{ssec:local}
and apply \Cref{thm:mainuniform} in \Cref{ssec:corrLip} to get a definable version of a correspondence, 
due to Shvartsman \cite{Shvartsman:2008aa},
between Whitney jets of class $C^{m,\om}$ and certain Lipschitz maps.

The proof of \Cref{thm:mainuniform} (which builds upon the one of \Cref{thm:Cm} devised in 
\cite{Kurdyka:1997ab,Kurdyka:2014aa,Thamrongthanyalak:2017aa} and 
also Paw{\l}ucki \cite{Pawlucki08aa} and is very different from Whitney's classical method 
\cite{Whitney34a}) rests on two main cornerstones:
\begin{enumerate}
    \item \emph{Two versions of Gromov's inequality \cite{Gromov87}; one classical, the other   
        incorporating the modulus of continuity.} These are inequalities for the derivatives of a definable function.
        Since the constants that appear in them are universal, it is not difficult to get them uniform for definable families of functions.
        See \Cref{cor:La1top2} and \Cref{prop:uniformGromov4}.
    \item \emph{Uniform $\La_p$-stratification of definable families of sets.} 
        Roughly speaking, definable families of sets admit a stratification into a finite number of 
        cells that are defined by functions satisfying certain estimates (for their derivatives up to order $p$). 
        The constants in these estimates and the number of cells are independent of the 
        parameter of the family. See \Cref{thm:uniformLapstrat}.
        This is essential for the uniform extension theorem \ref{thm:mainuniform}.
        We think that it is also of independent interest.
\end{enumerate}

It is a natural question if there exists even a continuous and/or linear extension operator 
for definable Whitney jets of class $C^{m,\om}$ (or $C^m$) on a definable closed set $E \subseteq \R^n$.
This remains an open problem.
The theorem of Bartle and Graves \cite{BartleGraves52} (see also \cite[Theorem 1.6]{BrudnyiBrudnyi12Vol1}) 
is not applicable since the normed spaces of definable jets and functions (defined in \Cref{sec:jets}) are not complete.

Azagra, Le Gruyer, and Mudarra \cite{Azagra:2018aa} 
give an explicit formula for the extension of Whitney jets of class $C^{1,1}$ 
with an optimal control of the norms; for definable input this explicit formula yields a definable 
$C^{1,1}$-extension.  
See \cite[Sections 4.2--4]{ParusinskiRainer:2023ab}.

Let us point out that Paw{\l}ucki \cite{Pawlucki08aa} presents a continuous linear extension operator 
for (not necessarily definable) Whitney jets on definable closed sets which preserves (up to a multiplicative constant) 
the modulus of continuity. This extension operator is a finite composite of operators that 
preserve definability on the one hand or are defined by integration with respect to a parameter 
(more precisely, convolution)
on the other hand; in general, the latter leads out of the original o-minimal structure.

While \cite{Pawlucki08aa} was a important source of inspiration for handling the modulus of continuity,
the main difficulty (besides getting everything uniformly bounded) was to replace the convolution operators by definable operations 
which at the same time allow for preserving the modulus of continuity.

The paper is organized as follows.
In \Cref{sec:strat} the main geometric tools are prepared: Gromov's inequality and uniform $\La_p$-stratification. 
We present in \Cref{sec:jets} background on definable bounded families of Whitney jets of class $C^{m,\om}$, 
most notably, how they behave under pullback along a definable family of $\La_p$-regular maps.
The proof of \Cref{thm:mainuniform} is carried out in \Cref{sec:proof}.
In the final \Cref{sec:applications} 
we give the mentioned applications, discuss dependence on the modulus of continuity, and prove \Cref{thm:Cmbd}.

\subsection*{Notation}
Let $\N=\{0,1,2,\ldots\}$ be the set of nonnegative integers.
We denote by $d(x,S) := \inf_{y \in S} | x -y|$ the Euclidean distance in $\R^n$ of a point $x$ to a subset $S$ of $\R^n$, 
with the convention $d(x,\emptyset) := +\infty$. 
The open Euclidean ball with center $x \in \R^n$ and radius $r>0$ is denoted by $B(x,r) := \{y \in \R^n : |x-y|<r\}$.
The closure of a set $S$ is denoted by $\ol S$ and the frontier of $S$ by $\p S:= \ol S \setminus S$. 
If $S$ is a subset of $\R^k$, we write $S \times 0$ for the set $\{(u,w) \in \R^k \times \R^\ell : u \in \R^k, w = 0\}$.
The graph of a map $\vh$ is denoted by $\Ga(\vh)$.
For real valued nonnegative functions $f,g$ we write $f \lesssim g$ if $f \le C g$ for some universal constant $C>0$.
In particular, it should be always understood that $C$ is independent of $a \in A$, i.e., the parameter  
which we consistently use in parameterized families of sets and maps.  
We write $f \approx g$ if $f \lesssim g$ and $g \lesssim f$.
We use standard multi-index notation and in this context $(i) \in \N^n$ is the multi-index 
$(0,\ldots,0,1,0,\ldots,0)$ with $1$ in the $i$-th entry.

\section{Uniform $\La_p$-stratifications} \label{sec:strat}

The existence of uniform $\La_p$-stratifications (\Cref{thm:uniformLapstrat}) is
based on an inequality of Gromov \cite{Gromov87}, of which we need two versions, 
and on uniform $L$-regular decomposition due to Kurdyka and Parusi\'nski \cite{KurdykaParusinski06}.

\subsection{Definable families of sets and maps}

Let $A$ be a definable subset of $\R^N$. 
A family $(E_a)_{a \in A}$ of definable sets $E_a \subseteq \R^n$ 
is called a \emph{definable family} if the associated set
\begin{equation} \label{eq:associatedset}
    E := \bigcup_{a \in A} \{a\} \times E_a
\end{equation}
is a definable subset of $\R^N \times \R^n$.
Conversely, any definable subset $E \subseteq \R^N \times \R^n$ defines 
a definable family $(E_a)_{a \in A}$ by setting 
$A := \{a \in \R^N : \exists x \in \R^n, (a,x) \in E \}$
and $E_a := \{x \in \R^n : (a,x) \in E\}$, where $a \in A$.
If we allow $E_a = \emptyset$, we may just take $A = \R^N$.

A family $(E_a')_{a \in A}$ of subsets $E_a' \subseteq E_a$ is said to be a 
\emph{definable subfamily} of $(E_a)_{a \in A}$ if the associated set $E'$ (defined in analogy to \eqref{eq:associatedset}) 
is a definable subset of $E$.

A family $(\vh_a)_{a \in A}$ of definable maps $\vh_a : E_a \to \R^m$ 
is called a \emph{definable family} if the map $\vh : E \to \R^m$, where $E$ is the associated set \eqref{eq:associatedset} 
and 
\begin{equation} \label{eq:associatedfunction}
    \vh(a,u) := \vh_a(u), \quad u \in E_a, 
\end{equation}
is definable. 
This is consistent with the first paragraph, since
\begin{align*}
    \Ga(\vh) &= \{(a,u,\vh(a,u)) \in \R^N \times \R^n \times \R^m : (a,u) \in E \} 
    \\
             &= \bigcup_{a \in A} \{(a,u,\vh_a(u)) \in \R^N \times \R^n \times \R^m : u \in E_a \} 
             \\
             &= \bigcup_{a \in A} \{a\} \times  \{(u,\vh_a(u)) \in \R^n  \times \R^m : u \in E_a \}
             = \bigcup_{a \in A} \{a\} \times  \Ga(\vh_a). 
\end{align*}

\subsection{Gromov's inequality}

We need two versions of an inequality due to Gromov \cite{Gromov87}.
We start with a $C^m$-version.

\begin{lemma}[{\cite[Lemma 2]{Kurdyka:1997ab}}] \label{lem:Gromov2KP}
    Let $m\ge 1$. Let $f : I \to \R$ be a $C^{m+1}$-function, where $I = [t_0-r,t_0+r] \subseteq \R$, $r>0$, is an interval. 
    Suppose that, for all $j = 2,\ldots, m+1$, we have either $f^{(j)}\ge 0$ on $I$ or $f^{(j)}\le 0$ on $I$.   
    Then 
    \begin{equation*}
        |f^{(m)}(t_0)| \le 2^{\binom{m+2}{2}-2} \, \frac{\sup_{t \in I} |f(t)|}{r^m}.
    \end{equation*}
\end{lemma}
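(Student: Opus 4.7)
The plan is to prove the bound by induction on $m$, with the base case $m=1$ obtained from Taylor's theorem and the inductive step from passing to the derivative on a halved interval.

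For the base case $m = 1$, write Taylor expansions
$$f(t_0 \pm r) = f(t_0) \pm f'(t_0) r + \tfrac{1}{2} f''(\xi_\pm) r^2$$
with $\xi_\pm$ between $t_0$ and $t_0 \pm r$. Since $f''$ has constant sign on $I$, both quadratic remainders have the same sign, so one of the two identities yields a one-sided bound for $\pm f'(t_0) r$ in terms of $f(t_0 \pm r) - f(t_0)$. Together with the trivial estimate $|f(t_0 \pm r) - f(t_0)| \le 2\sup_I |f|$, this gives $|f'(t_0)| \le 2\sup_I |f|/r$, which matches $2^{\binom{3}{2}-2}\sup_I|f|/r$.

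For the inductive step, assume the lemma holds at level $m-1$ and apply it to $g := f'$, which is $C^m$ on $I$ and satisfies $g^{(j)} = f^{(j+1)}$ of constant sign for $j = 2, \ldots, m$. I would work on the halved interval $J := [t_0 - r/2, t_0 + r/2]$ to obtain
$$|f^{(m)}(t_0)| = |g^{(m-1)}(t_0)| \le 2^{\binom{m+1}{2}-2}\, \frac{\sup_J |f'|}{(r/2)^{m-1}}.$$
Next, bound $\sup_J |f'|$ by applying the already established base case at each $t \in J$ on the subinterval $[t-r/2, t+r/2] \subseteq I$ (on which $f''$ still has constant sign): this gives $|f'(t)| \le 4\sup_I |f|/r$ uniformly in $t \in J$. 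Chaining the two estimates produces a bound of the form $2^{\binom{m+1}{2}+m-1}\sup_I |f|/r^m$, and the identity $\binom{m+2}{2} - \binom{m+1}{2} = m+1$ shows $\binom{m+1}{2} + m - 1 = \binom{m+2}{2} - 2$, closing the induction.

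The only real obstacle is arithmetic: making sure that the powers of $2$ picked up from rescaling $r \to r/2$ and from the base-case step combine to exactly the stated constant $2^{\binom{m+2}{2}-2}$. The geometric mechanism — trading one order of differentiation for a factor of $2$ in the radius, and exploiting that the sign hypotheses turn Taylor's theorem into a one-sided inequality at every stage — is a standard Markov/Landau-style telescoping and needs no surprises beyond bookkeeping.
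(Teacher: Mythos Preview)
Your proof is correct. The paper does not give its own proof of this lemma but simply cites it from \cite{Kurdyka:1997ab}; your induction with the halved-interval trick is the standard argument (and is exactly the mechanism the paper itself uses when deriving the $\omega$-version in \Cref{lem:Gromov2} from this lemma).
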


We combine \Cref{lem:Gromov2KP} with the following lemma in order to get a $C^{m,\om}$-version in \Cref{lem:Gromov2}.

\begin{lemma} \label{lem:Gromov1}
    Let $f : I \to \R$ be a $C^2$-function, where $I = [t_0-r,t_0+r] \subseteq \R$, $r>0$, is an interval, 
    and let $\om$ be a modulus of continuity for $f$. 
    Suppose that $f''\ge 0$ on $I$ or $f''\le 0$ on $I$.   
    Then 
    \begin{equation*}
        |f'(t_0)| \le \frac{\om(r)}{r}.
    \end{equation*}
\end{lemma}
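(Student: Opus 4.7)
The plan is to exploit the fact that a fixed-sign second derivative forces $f$ to be either convex or concave on $I$, and to use the one-sided tangent estimate that convexity provides at $t_0$.

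First I would reduce to the convex case. Without loss of generality, assume $f'' \ge 0$ on $I$; otherwise, replace $f$ by $-f$, which has the same modulus of continuity $\om$ and satisfies $(-f)'' \ge 0$, while $|(-f)'(t_0)| = |f'(t_0)|$. Under this assumption, $f$ is convex on $I$, so its graph lies above every tangent line:
\begin{equation*}
    f(t) \ge f(t_0) + f'(t_0)(t - t_0) \quad \text{ for all } t \in I.
\end{equation*}

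Next I would evaluate this inequality at the two endpoints $t = t_0 \pm r \in I$, splitting by the sign of $f'(t_0)$. If $f'(t_0) \ge 0$, plugging in $t = t_0 + r$ yields
\begin{equation*}
    f'(t_0)\, r \le f(t_0+r) - f(t_0) \le \om(r),
\end{equation*}
using that $\om$ is a modulus of continuity for $f$ (absorbing the constant $C$ of \eqref{eq:omHoelder} into $\om$, since $C\om$ is again a modulus of continuity). If $f'(t_0) < 0$, plugging in $t = t_0 - r$ gives $-f'(t_0)\, r \le f(t_0-r) - f(t_0) \le \om(r)$. In either case, $|f'(t_0)|\, r \le \om(r)$, which is the desired estimate.

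I do not expect any real obstacle: the argument is a one-line consequence of convexity together with the mean value type bound from the modulus of continuity. The only point worth flagging is the role of the constant in \eqref{eq:omHoelder}; the clean form $|f'(t_0)| \le \om(r)/r$ in the statement should be read modulo replacement of $\om$ by a constant multiple, or with an implicit constant $C$ on the right-hand side. This lemma will then be combined with \Cref{lem:Gromov2KP} in the subsequent \Cref{lem:Gromov2} to produce the $C^{m,\om}$-version of Gromov's inequality, so only the qualitative shape $|f'(t_0)| \lesssim \om(r)/r$ matters for the applications.
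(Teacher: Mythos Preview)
Your proof is correct and essentially the same as the paper's: both reduce to the convex case and bound $|f'(t_0)|$ by the secant slope $\om(r)/r$. The only cosmetic difference is that the paper uses the monotonicity of secant slopes of a convex function and passes to the limit $s\to 0$, whereas you use the equivalent tangent-line (support) inequality directly; your remark about the implicit constant from \eqref{eq:omHoelder} applies equally to the paper's argument.
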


\begin{proof}
    We may assume that $t_0=0$.
    Suppose that $f''\ge 0$ on $I$. 
    Then $f$ is convex and, for $0< s < r$,
    \[
        \frac{f(s) - f(0)}{s} \le \frac{f(r)-f(0)}{r} \le \frac{\om(r)}{r}. 
    \]
    Letting $s \to 0$, we find that $f'(0) \le \om(r)/r$.
    The same reasoning applied to $f(-t)$ shows that also $-f'(0) \le \om(r)/r$ so that the assertion is proved.

    The case $f'' \le 0$ follows from the previous one by considering $-f$.
\end{proof}

\begin{lemma} \label{lem:Gromov2}
    Let $m\ge 1$. Let $f : I \to \R$ be a $C^{m+1}$-function, where $I = [t_0-r,t_0+r] \subseteq \R$, $r>0$, is an interval, 
    and let $\om$ be a modulus of continuity for $f$. 
    Suppose that, for all $j = 2,\ldots, m+1$, we have either $f^{(j)}\ge 0$ on $I$ or $f^{(j)}\le 0$ on $I$.   
    Then 
    \begin{equation*}
        |f^{(m)}(t_0)| \le 2^{\binom{m+1}{2}+m-2} \, \frac{\om(r)}{r^m}.
    \end{equation*}
\end{lemma}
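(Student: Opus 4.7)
The plan is to deduce this from the two preceding lemmas by a single reduction: apply the classical $C^{m}$-Gromov inequality (\Cref{lem:Gromov2KP}) to the derivative $f'$ on a shrunken subinterval, and then use \Cref{lem:Gromov1} to convert the resulting $\sup|f'|$-bound into an $\om$-bound.

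Concretely, I would first dispatch the base case $m=1$: it is nothing but \Cref{lem:Gromov1}, and the constant matches since $2^{\binom{2}{2}+1-2}=1$. For $m\ge 2$, I set $g:=f'$ and let $I':=[t_0-r/2,t_0+r/2]\subseteq I$. The hypothesis that $f^{(j)}$ has constant sign on $I$ for each $j=2,\ldots,m+1$ translates into $g^{(j)}=f^{(j+1)}$ having constant sign on $I'$ for each $j=2,\ldots,m$. Hence \Cref{lem:Gromov2KP}, applied to $g$ with $m$ replaced by $m-1$ and radius $r/2$, yields
\begin{equation*}
    |f^{(m)}(t_0)|=|g^{(m-1)}(t_0)|\le 2^{\binom{m+1}{2}-2}\,\frac{\sup_{t\in I'}|f'(t)|}{(r/2)^{m-1}}.
\end{equation*}

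To control $\sup_{t\in I'}|f'(t)|$, I would apply \Cref{lem:Gromov1} pointwise: for any $t\in I'$, the interval $[t-r/2,t+r/2]$ is contained in $I$, and $f''$ has constant sign there by hypothesis, so
\begin{equation*}
    |f'(t)|\le \frac{\om(r/2)}{r/2}\le \frac{2\,\om(r)}{r},
\end{equation*}
using that $\om$ is increasing. Substituting back gives the factor $2\cdot 2^{m-1}=2^{m}$ on top of the existing $2^{\binom{m+1}{2}-2}$, yielding the stated constant $2^{\binom{m+1}{2}+m-2}$.

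No genuine obstacle is expected: the sign conditions are preserved under taking $f'$ in exactly the right range of indices, and the only arithmetic to watch is the bookkeeping of the exponent ($\binom{m+1}{2}-2$ from \Cref{lem:Gromov2KP} applied to $g$, plus $+1$ from $\om(r/2)\le \om(r)$ with denominator $r/2$, plus $+(m-1)$ from the $(r/2)^{m-1}$), which adds up to the claimed $\binom{m+1}{2}+m-2$. The only mildly delicate point to flag is that one must shrink the interval before invoking \Cref{lem:Gromov1}, since \Cref{lem:Gromov1} needs room on both sides of $t$; the choice of half-radius $r/2$ is what makes all the constants line up.
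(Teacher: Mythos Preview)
Your proof is correct and follows essentially the same route as the paper's: both handle $m=1$ directly via \Cref{lem:Gromov1}, and for $m\ge 2$ apply \Cref{lem:Gromov2KP} to $f'$ on the half-radius interval and then invoke \Cref{lem:Gromov1} pointwise to bound $\sup_{|t-t_0|\le r/2}|f'(t)|$ by $\om(r/2)/(r/2)$. Your explicit verification of the sign hypotheses and the constant arithmetic is a bit more detailed than the paper's terse chain of inequalities, but the argument is identical.
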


\begin{proof}
    If $m=1$, then the statement is immediate from \Cref{lem:Gromov1}.
    If $m\ge 2$, then,  
    by \Cref{lem:Gromov2KP} applied to $f'$ and in turn \Cref{lem:Gromov1}, we 
    have
    \[
        |f^{(m)}(t_0)| \le \frac{2^{\binom{m+1}{2}-2}}{(\frac{r}2)^{m-1}} \sup_{|t-t_0| \le \frac{r}2 } |f'(t)|
        \le \frac{2^{\binom{m+1}{2}-2}}{(\frac{r}2)^{m-1}}\cdot \frac{\om(\tfrac{r}{2})}{\tfrac{r}{2}} 
        \le 2^{\binom{m+1}{2}+m-2} \, \frac{\om(r)}{r^m}
    \]
    as claimed, since $\om$ is increasing.
\end{proof}

\subsection{Uniform bounds for definable families of functions}

\begin{proposition} \label{prop:try}
    Let $(U_a)_{a \in A}$ be a definable family of open sets $U_a \subseteq \R^k$ 
    and let $U \subseteq \R^N \times \R^k$ be the associated definable set (see \eqref{eq:associatedset}).
    Let $(\vh_a)_{a \in A}$ be a definable family of functions $\vh_a : U_a \to \R$ 
    and let $\vh : U \to \R$ be the associated definable function (see \eqref{eq:associatedfunction}).
    Let $\al \in \N^k$ with $|\al| = p$.
    There exists a definable subset $Z \subseteq U$ such that, for all $a \in A$, 
    $Z_a$ is closed in $U_a$, $\dim Z_a < k$,  
    $\vh_a$ is $C^p$ on $U_a \setminus Z_a$, 
    and, for each open ball $B=B(u,r)$, $r>0$, contained in $U_a \setminus Z_a$, we have
    \begin{equation} \label{eq:try}
        |\p^{\al} \vh_a(u)| \le C(k,p)\, \sup_{v \in B}|\vh_a(v)| \, r^{-|\al|}.
    \end{equation}
\end{proposition}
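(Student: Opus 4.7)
The plan is to combine o-minimal cell decomposition---to locate a thin definable bad set $Z$ on which $\vh_a$ is sufficiently smooth and all relevant partial derivatives have constant sign---with an iterated application of the univariate Gromov-type estimate \Cref{lem:Gromov2KP} along axis-parallel line segments through the center of the ball.

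First I would construct $Z$ in two layers. Definable cell decomposition of $U$ carried out compatibly with the graph of $\vh$ and with the projection $(a,x) \mapsto a$ yields a definable $Z_0 \subseteq U$ such that, for every $a \in A$, $(Z_0)_a$ is closed in $U_a$, $\dim (Z_0)_a < k$, and $\vh_a$ is of class $C^{p+1}$ (in fact $C^\infty$) on $U_a \setminus (Z_0)_a$. Next, for every $\ga \in \N^k$ with $|\ga| \le p+1$, the two open definable sets $\{(a,x) \in U \setminus Z_0 : \p^\ga \vh_a(x) > 0\}$ and $\{(a,x) \in U \setminus Z_0 : \p^\ga \vh_a(x) < 0\}$ have frontiers with fibers of dimension strictly less than $k$. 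Taking the union of $Z_0$ with all of these frontiers (closed fiberwise in $U$) produces a definable $Z \subseteq U$ with $Z_a$ closed in $U_a$, $\dim Z_a < k$, $\vh_a \in C^{p+1}(U_a \setminus Z_a)$, and such that each $\p^\ga \vh_a$ with $|\ga| \le p+1$ has constant sign on every connected component of $U_a \setminus Z_a$.

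For the estimate, fix an arbitrary ball $B = B(u,r) \subseteq U_a \setminus Z_a$ and inscribe the box $Q := \prod_{j=1}^k (u_j - \rho, u_j + \rho)$ with $\rho := r/(2\sqrt{k})$, so that $Q \subseteq B$ lies in a single connected component of $U_a \setminus Z_a$. Set $F_0 := \vh_a$ and, for $j = 1, \ldots, k$, define
\[
F_j(x_{j+1}, \ldots, x_k) := (\p_j^{\al_j} F_{j-1})(u_j, x_{j+1}, \ldots, x_k),
\]
so that $F_k = \p^\al \vh_a(u)$. At step $j$, I apply \Cref{lem:Gromov2KP} with $m = \al_j$ to the univariate $C^{\al_j+1}$-function $t \mapsto F_{j-1}(u_j + t, x_{j+1}, \ldots, x_k)$ on the interval $(-\rho, \rho)$. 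Smoothness is provided by $\vh_a \in C^{p+1}(Q)$, and the sign hypothesis holds because each derivative $\p_j^l \p_{j-1}^{\al_{j-1}} \cdots \p_1^{\al_1} \vh_a$ with $l = 2, \ldots, \al_j+1$ is a partial derivative of $\vh_a$ of order at most $p+1$, hence of constant sign on $Q$ by construction of $Z$. Iterating these bounds produces
\[
|\p^\al \vh_a(u)| \le \bigg(\prod_{j=1}^k 2^{\binom{\al_j+2}{2}-2}\bigg) \rho^{-|\al|} \sup_Q |\vh_a| \le C(k,p)\, r^{-|\al|} \sup_B |\vh_a|,
\]
with a constant $C(k,p)$ depending only on $k$ and $p$.

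The main obstacle, and the step where o-minimality plays the decisive role, is the simultaneous construction of $Z$ over the whole parameter space $A$: the statement requires a single definable $Z \subseteq U$ whose fiber $Z_a$ has dimension strictly less than $k$ for \emph{every} $a \in A$ (not just for a generic $a$) and whose complement controls the sign of the finitely many partial derivatives appearing in the iteration. Parameterized cell decomposition compatible with the projection $(a,x) \mapsto a$ delivers exactly such a $Z$. Once it is in place, the iterated Gromov bound is pointwise, and the constants inherited from \Cref{lem:Gromov2KP} are universal, so that $C(k,p)$ is automatically independent of $a \in A$.
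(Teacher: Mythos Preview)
Your argument is correct and follows the same overall strategy as the paper: remove a thin definable set so that the finitely many relevant higher derivatives do not change sign, then invoke \Cref{lem:Gromov2KP}. One small slip: the parenthetical ``(in fact $C^\infty$)'' is false in a general o-minimal expansion of the real field; only $C^{p+1}$ is guaranteed by cell decomposition, and that is all you use.

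The main difference lies in how the one-variable lemma is applied. The paper uses the polarization identity to write $\p^\al$ as a finite linear combination of pure directional derivatives $d_v^p$ for suitably chosen unit directions $v$; the bad set $Z$ is then built from the (non-interior) zero loci of $d_v^j\vh$, $j=2,\ldots,p+1$, and \Cref{lem:Gromov2KP} is applied once along each direction $v$ on the full segment of length $2r$ inside $B$. You instead inscribe a cube of half-width $\rho=r/(2\sqrt{k})$ in $B$, control the signs of all $\p^\ga\vh_a$ with $|\ga|\le p+1$, and iterate \Cref{lem:Gromov2KP} coordinate by coordinate. Your route is slightly longer and produces a larger constant (an extra factor $(2\sqrt{k})^{p}$ from the inscribed cube, plus the product of the one-variable constants), but it is entirely elementary and avoids the polarization trick. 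The paper's route is a bit slicker and needs sign control only for the specific directional derivatives $d_v^j\vh$ rather than for all mixed partials up to order $p+1$. Either way the constant depends only on $k$ and $p$, which is the essential point for the uniform-in-$a$ conclusion.
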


\begin{proof}
    Consider the definable set 
    \begin{align*}
        X &:= \{(b,v) \in U : (a,u) \mapsto \vh(a,u) \text{ is not } C^{p+1} \text{ at } (b,v) \text{ in } u \}
        \\
          &= \{(b,v) \in U : \vh_b \text{ is not } C^{p+1} \text{ at } v \}
    \end{align*}
    and note that 
    \[
        X_a = \{u \in U_a : \vh_a \text{ is not } C^{p+1} \text{ at } u\}
    \]
    is closed in $U_a$ and, by o-minimality, $\dim X_a < k$. 

    The operator $\p^\al$ is a linear combination of directional derivatives $d_v^p$ for 
    a finite collection $V$ of suitably chosen unit directions $v$ in $\R^k$. 
    Let $\vh_1,\ldots,\vh_s$ be an enumeration of all functions $d^j_v \vh : U \setminus X \to \R$, 
    $j = 2,\ldots,p+1$, $v \in V$ (where $d_v^j$ acts only in the $u$-variable:
    $d_v^j \vh(a,u) = \p_t^j|_{t=0} \, \vh(a,u + tv)$).
    For $i =1,\ldots,s$, set 
    \begin{align*}
        Y_i &:=  \{(a,u) \in U \setminus X : \exists \ep>0 \, \forall v \in B(u,\ep), \vh_i(a,v) = 0 \},
        \\
        Z_i &:= \{(a,u) \in U \setminus X : \vh_i(a,u)=0\} \setminus  Y_i, 
        \\ 
        Z &:= X \cup \bigcup_{i=1}^s Z_i. 
    \end{align*}
    Then $Z$ is a definable subset of $U$ and, for all $a \in A$, $Z_a$ is closed in $U_a$ and $\dim Z_a < k$.

    Now \eqref{eq:try} follows easily by applying \Cref{lem:Gromov2KP} to $t \mapsto \vh(a,u+tv)$. 
\end{proof}

\begin{corollary} \label{cor:La1top}
    Let $(U_a)_{a \in A}$ be a definable family of open sets $U_a \subseteq \R^k$ 
    and let $(\vh_a)_{a \in A}$ be a definable family of $C^1$-functions $\vh_a : U_a \to \R$.
    Suppose that there is a constant $M> 0$ such that
    \begin{equation*}
        |\p_j \vh_a(u)| \le M,\quad a \in A,\, u \in U_a,\,  j =1,\ldots,k.
    \end{equation*}
    Let $p$ be a positive integer.
    There exists a definable family $(Z_a)_{a \in A}$ of closed definable sets $Z_a \subseteq U_a$ of dimension $\dim Z_a<k$ 
    such that, for all $a \in A$, $\vh_a$ is of class $C^p$ on $U_a \setminus Z_a$ and 
    \begin{equation*}
        |\p^\ga \vh_a(u)| \le C(k,p)\, M\, d(u,Z_a \cup \p U_a)^{1 -|\ga|}, \quad u \in U_a \setminus Z_a,
        \, 1 \le |\ga| \le p.
    \end{equation*}
\end{corollary}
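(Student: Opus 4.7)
The plan is to apply \Cref{prop:try} not to $\vh_a$ itself but to each first-order partial derivative $\p_j \vh_a$, $j=1,\ldots,k$. The sup-factor $\sup_B |\vh_a|$ on the right-hand side of \Cref{prop:try} is in general uncontrolled, whereas $\sup_{U_a} |\p_j \vh_a| \le M$ by hypothesis; so applying \Cref{prop:try} to $\p_j \vh_a$ converts this factor into the constant $M$. The identity $\p^\ga = \p^\be \p_j$, valid whenever $\ga_j \ge 1$ with $\be := \ga - (j)$ of length $|\ga|-1$, then turns an estimate on $\p^\be \p_j \vh_a$ into precisely the desired estimate on $\p^\ga \vh_a$.

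Concretely, for each $j \in \{1, \ldots, k\}$ and each multi-index $\be \in \N^k$ with $1 \le |\be| \le p-1$, I would apply \Cref{prop:try} to the definable family $(\p_j \vh_a)_{a \in A}$, with the integer and multi-index in that statement taken to be $|\be|$ and $\be$, respectively. This yields a definable set $Z_{j,\be} \subseteq U$ whose fibers $Z_{j,\be,a}$ are closed in $U_a$ and of dimension $< k$, such that $\p_j \vh_a$ is of class $C^{|\be|}$ on $U_a \setminus Z_{j,\be,a}$, and
\[
    |\p^\be \p_j \vh_a(u)| \le C(k, |\be|)\, \sup_{v \in B(u,r)} |\p_j \vh_a(v)|\, r^{-|\be|} \le C(k, |\be|)\, M\, r^{-|\be|}
\]
for every open ball $B(u,r) \subseteq U_a \setminus Z_{j,\be,a}$.

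Taking the finite union $Z := \bigcup_{j,\be} Z_{j,\be}$ gives a definable subset of $U$ whose fibers $Z_a$ are closed in $U_a$ and of dimension $<k$; on $U_a \setminus Z_a$ each $\p_j \vh_a$ is of class $C^{p-1}$, so $\vh_a$ is of class $C^p$. For $u \in U_a \setminus Z_a$ and $\ga$ with $1 \le |\ga| \le p$, I would pick any $j$ with $\ga_j \ge 1$, set $\be := \ga - (j)$, and take $r := d(u, Z_a \cup \p U_a)$. The ball $B(u, r)$ then lies in $U_a \setminus Z_a$, and the preceding estimate yields
\[
    |\p^\ga \vh_a(u)| = |\p^\be \p_j \vh_a(u)| \le C(k, p)\, M\, d(u, Z_a \cup \p U_a)^{1 - |\ga|},
\]
with $C(k,p) := \max_{1 \le q \le p-1} C(k, q)$; the case $|\ga| = 1$ is covered directly by the hypothesis $|\p_j \vh_a| \le M$. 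The only point that requires care is the geometric verification that $B(u, d(u, Z_a \cup \p U_a)) \subseteq U_a \setminus Z_a$, which rests on the standard fact $d(u, \p U_a) = d(u, \R^k \setminus U_a)$ for open $U_a$ and $u \in U_a$. I do not anticipate any more substantial obstacle: the argument is essentially a clean bookkeeping of several applications of \Cref{prop:try}.
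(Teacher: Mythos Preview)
Your proposal is correct and matches the paper's approach exactly: the paper's proof is the single line ``Apply \Cref{prop:try} to $\p_j \vh_a$,'' and you have carefully unpacked precisely this, including the bookkeeping over $j$ and $\be$, the union defining $Z$, and the verification that $B(u,d(u,Z_a\cup\p U_a))\subseteq U_a\setminus Z_a$.
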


\begin{proof}
    Apply \Cref{prop:try} to $\p_j \vh_a$.
\end{proof}

\begin{remark}
    We may assume that $Z_a$ is not empty so that $d(u,Z_a \cup \p U_a)$ is always finite. 
    We will tacitly make this assumption in all subsequent results of this type.
\end{remark}

\begin{proposition} \label{prop:Gromov3}
    Let $(U_a)_{a \in A}$ be a definable family of open sets $U_a \subseteq \R^k$ 
    and $(\vh_a)_{a \in A}$ a definable family of continuous functions $\vh_a : U_a \to \R$.
    Let $p$ be a positive integer.
    Then there exists a definable family $(Z_a)_{a \in A}$ of closed subsets $Z_a \subseteq U_a$ of dimension $\dim Z_a < k$ such that, 
    for all $a \in A$, $\vh_a$ is $C^p$ on $U_a \setminus Z_a$ and
    \begin{equation} \label{eq:Gromov3}
        |\p^\ga \vh_a(x)| \le C(k,p)\, \frac{\om(d(x,Z_a \cup \p U_a))}{d(x,Z_a \cup \p U_a)^{|\ga|}}, \quad x \in U_a \setminus Z_a,
        \, 1 \le |\ga| \le p,
    \end{equation}
    where $\om$ is a modulus of continuity for $\vh_a$.
\end{proposition}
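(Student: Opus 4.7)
The plan is to mimic the proof of \Cref{prop:try}, replacing the classical Gromov inequality \Cref{lem:Gromov2KP} with its $\om$-H\"older counterpart \Cref{lem:Gromov2}, and replacing the arbitrary ambient ball $B$ by one of radius arbitrarily close to $d(x,Z_a \cup \p U_a)$.

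First I would build the exceptional set. Fix a finite collection $V$ of unit vectors in $\R^k$ such that every partial derivative operator $\p^\ga$ with $1 \le |\ga| \le p$ is a linear combination of directional derivatives $d_v^{|\ga|}$ for $v \in V$. Let $X := \{(a,u) \in U : \vh_a \text{ is not } C^{p+1} \text{ at } u\}$; by o-minimality $\dim X_a < k$ and $X_a$ is closed in $U_a$. For each $v \in V$ and each $j \in \{2,\ldots,p+1\}$ the map $d_v^j \vh$ is a definable function on $U \setminus X$. Enumerate these as $\vh_1,\ldots,\vh_s$ and, exactly as in the proof of \Cref{prop:try}, set
\begin{align*}
    Y_i &:= \{(a,u) \in U \setminus X : \exists \ep > 0 \, \forall w \in B(u,\ep), \vh_i(a,w) = 0\},
    \\
    Z_i &:= \{(a,u) \in U \setminus X : \vh_i(a,u) = 0\} \setminus Y_i,
    \\
    Z   &:= X \cup \bigcup_{i=1}^s Z_i.
\end{align*}
Then $Z$ is definable, $Z_a$ is closed in $U_a$, $\dim Z_a < k$, and $\vh_a$ is $C^{p+1}$ on $U_a \setminus Z_a$.

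Next, fix $a \in A$ and $x \in U_a \setminus Z_a$, and write $\de := d(x,Z_a \cup \p U_a)$. For any $r < \de$ the open ball $B(x,r)$ lies in $U_a \setminus Z_a$, is connected, and the restriction of each $d_v^j \vh_a$ (with $v \in V$, $2 \le j \le p+1$) to $B(x,r)$ is continuous and, by construction of $Z_a$, of locally constant sign; hence it has constant sign on all of $B(x,r)$. For $v \in V$ the function $f(t) := \vh_a(x+tv)$ is $C^{p+1}$ on $[-r,r]$, admits $\om$ as a modulus of continuity (since $|v| = 1$), and satisfies $f^{(j)}(t) = d_v^j \vh_a(x+tv)$ of constant sign for $j = 2,\ldots,p+1$. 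Applying \Cref{lem:Gromov2} with $m = |\ga|$ at $t_0=0$ and letting $r \nearrow \de$ (using continuity of $\om$) yields
\[
    |d_v^{|\ga|}\vh_a(x)| \le 2^{\binom{|\ga|+1}{2}+|\ga|-2} \, \frac{\om(\de)}{\de^{|\ga|}}, \quad 1 \le |\ga| \le p.
\]
Writing $\p^\ga$ as a linear combination of $d_v^{|\ga|}$ for $v \in V$, with coefficients depending only on $k$ and $\ga$, delivers \eqref{eq:Gromov3} with a constant $C(k,p)$ absorbing the combinatorial factors.

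The only genuinely new ingredient relative to \Cref{prop:try} is propagating the sign information from a single segment to an entire ball $B(x,r)$; this is clean because $U_a \setminus Z_a$ is open, $B(x,r)$ is connected and contained in it, and each $d_v^j \vh_a$ is continuous there with locally constant sign. The rest of the argument is a direct transcription of the proof of \Cref{prop:try} with \Cref{lem:Gromov2} in place of \Cref{lem:Gromov2KP}.
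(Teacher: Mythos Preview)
Your proposal is correct and follows essentially the same approach as the paper: the paper's proof simply says ``Follow the proof of \Cref{prop:try} and use \Cref{lem:Gromov2},'' and that is exactly what you do, with the added care of spelling out the sign-propagation argument on the connected ball $B(x,r)$ and passing to the limit $r \nearrow \de$. The one remark I would make is that this sign-propagation step is not genuinely new relative to \Cref{prop:try}; it is already implicit there, since applying \Cref{lem:Gromov2KP} to $t \mapsto \vh_a(x+tv)$ requires the same constancy of sign along the segment, obtained in the same way.
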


\begin{proof}
    Follow the proof of  \Cref{prop:try} and use \Cref{lem:Gromov2}.
\end{proof}

\begin{remark}
    We want to emphasize that the construction of $(Z_a)_{a \in A}$ is independent of $\om$. 
\end{remark}

\subsection{$\La_p$-regular mappings}
\label{ssec:regmap}

Let $U \subseteq \R^k$ be an open set. 
Let $p$ be a positive integer.
A $C^p$-mapping $\vh : U  \to \R^n$ 
is said to be \emph{$\La_p$-regular} 
if there exists a constant $C>0$ such that 
\begin{equation} \label{eq:Lapreg}
    |\p^\ga \vh(u)| \le C\, d(u,\p U)^{1-|\ga|}, \quad u \in U,\, 1 \le |\ga| \le p.
\end{equation}

$\La_p$-regular maps behave nicely on quasiconvex sets. 
Let us first recall the definition of quasiconvexity.

\begin{definition}[Quasiconvex sets] \label{def:quasiconvex}
    A set $E \subseteq \R^n$ is called \emph{quasiconvex} 
    if there is a constant $C>0$ such any two points $x,y \in E$ can be joined in $E$ 
    by a rectifiable path of length at most $C \, |x-y|$.
\end{definition}

Let $\vh : U  \to \R^n$ by $\La_p$-regular.
If $E \subseteq U$ is a quasiconvex subset, then
$\vh|_{E}$ is Lipschitz on $E$ and extends continuously to a map $\ol \vh$ on $\ol E$.

\subsection{$\La_p$-cells}

Let $p$ be a positive integer.
We define recursively $\La_p$-cells in $\R^n$:

A definable subset $S \subseteq \R^n$ is an \emph{open $\La_p$-cell in $\R^n$} if,
\begin{itemize}
    \item in the case $n=1$, $S$ is an open interval in $\R$, 
    \item in the case $n>1$, $S$ is of the form
        \[
            S= (\ps_1,\ps_2,T) := \{(x',x_n) : x' \in T,\, \ps_1(x') < x_n < \ps_2(x')\}, 
        \]
        where $T$ is an open $\La_p$-cell in $\R^{n-1}$ and each $\ps_i$, $i=1,2$,
        is either a $\La_p$-regular definable function $T\to \R$ or identically $-\infty$ or $+\infty$, 
        and $\ps_1 < \ps_2$ on $T$. (Here $x' = (x_1,\ldots,x_{n-1})$.) 
\end{itemize}
Note that $S$ is quasiconvex. 
If $\ps_i$ is finite, then it is Lipschitz on $T$ and has a continuous extension $\ol \ps_i$ to $\ol T$.

A definable subset $S$ of $\R^n$ is a \emph{$k$-dimensional $\La_p$-cell in $\R^n$}, where $k =0,\ldots,n-1$, 
if 
\[
    S=\{ (u,w) : u \in T,\, w=\vh(u)\} = \Ga(\vh), 
\]
where $u=(x_1,\ldots,x_k)$, $w=(x_{k+1},\ldots,x_n)$, 
$T$ is an open $\La_p$-cell in $\R^k$, and $\vh: T \to \R^{n-k}$ is a $\La_p$-regular definable map.

\begin{definition}[$\La_p$-cell with constant $C$]
    A \emph{$\La_p$-cell $S$ in $\R^n$} is an open or a $k$-dimensional $\La_p$-cell in $\R^n$. 
    We say that $S$ is a $\La_p$-cell in $\R^n$ \emph{with constant $C$} 
    if all the $\La_p$-regular maps involved in the recursive definition of $S$ 
    satisfy \eqref{eq:Lapreg} with the same constant $C$.
\end{definition}

\subsection{Associated functions} \label{ssec:associated}

We associate with any open $\La_p$-cell $S$ in $\R^n$ a sequence of $2n+1$ definable 
functions $\rh_j : \ol S \to [0,\infty]$, for $j=0,1,2,\ldots,2n$.
We put $\rh_0\equiv 1$ and define $\rh_j$ for $j \ge 1$ as follows:
\begin{description}
    \item[Case $n=1$] If $S=(a_1,a_2)$ we set
        \begin{align*}
            \rh_1(x) &:= 
            \begin{cases}
                x-a_1 & \text{ if } a_1 \in \R, 
                \\
                +\infty & \text{ if  } a_1= -\infty,
            \end{cases}
            \\
            \rh_2(x) &:= 
            \begin{cases}
                a_2-x & \text{ if } a_2 \in \R, 
                \\
                +\infty & \text{ if  } a_2= +\infty.
            \end{cases}
        \end{align*}        
    \item[Case $n>1$] If $S = (\ps_1,\ps_2,T)$ and $\si_j$, $j = 1,\ldots,2n-2$, 
        are the functions associated with $T$,
        we set $\rh_j(x)= \si_j(x')$, for $j=1,\ldots,2n-2$, and
        \begin{align*}
            \rh_{2n-1}(x) &:= 
            \begin{cases}
                x_n-\ol \ps_1(x') & \text{ if } \ps_1 \text{ is finite}, 
                \\
                +\infty & \text{ if  } \ps_1\equiv -\infty,
            \end{cases}
            \\
            \rh_{2n}(x) &:= 
            \begin{cases}
                \ol \ps_2(x')-x_n & \text{ if } \ps_2 \text{ is finite}, 
                \\
                +\infty & \text{ if  } \ps_2 \equiv +\infty.
            \end{cases}
        \end{align*}    
\end{description}

\begin{remark}
    We add the function $\rh_0$ (which is not present in \cite{Kurdyka:1997ab,Kurdyka:2014aa,Thamrongthanyalak:2017aa}) 
    in order to handle the extension from unbounded $\La_p$-cells (see the proof of \Cref{thm:mainuniform}).
\end{remark}

Each of the functions $\rh_j$, that is finite, 
is $\La_p$-regular on $S$ and Lipschitz on $\ol S$,  see \cite[Lemma 4]{Kurdyka:1997ab}.
There is a positive constant $C>0$ such that
\begin{equation} \label{eq:K}
    \frac{1}{C} \min_{j\ge 1} \rh_j(x) \le d(x,\p S) \le \min_{j\ge 1} \rh_j(x),\quad x \in \ol S,
\end{equation}
where $d(x,\emptyset) = +\infty$ by convention; see \cite[Lemma 3]{Kurdyka:1997ab}.
Consequently,
\begin{equation} \label{eq:K0}
    \frac{1}{C} \min_{j\ge 0} \rh_j(x) \le \min\{1,d(x,\p S)\} \le \min_{j\ge 0} \rh_j(x),\quad x \in \ol S.
\end{equation}
If $\rh_j$ for $j\ge 1$ is finite, then there exists a positive constant $C>0$ such that 
\begin{equation} \label{eq:rt}
    \Big|\p^\ga \Big( \frac{1}{\rh_j} \Big)(x)\Big| \le C \, d(x,\p S)^{-|\ga|-1}, \quad x \in S,\, |\ga|\le p; 
\end{equation}
see \cite[Lemma 5]{Kurdyka:1997ab} and \eqref{eq:inv}. It follows that for all finite $\rh_j$, $j \ge 0$, we have 
\begin{equation} \label{eq:rt0}
    \Big|\p^\ga \Big(\frac{1}{\rh_j} \Big)(x)\Big| \le C \, \min\{1,d(x,\p S)\}^{-|\ga|-1}, \quad x \in S,\, |\ga|\le p. 
\end{equation}

\begin{remark} \label{rem:Krt}
    The constants $C$ in \eqref{eq:K}--\eqref{eq:rt0} 
    only depend on the constants of the $\La_p$-regular maps involved in the definition of $S$.
\end{remark}

For later reference, we recall that for a non-vanishing $C^p$-function $r$ we have, for $1\le |\ga| \le p$,
\begin{equation} \label{eq:inv}
    \p^\ga (1/r) = \sum_{j=1}^{|\ga|} \Big( \sum_{\substack{\de_1 + \cdots +\de_j=\ga\\ \de_1\ne0,\ldots,\de_j \ne0}} 
    a^\ga_{\de_1, \ldots, \de_j} \p^{\de_1} r \cdots \p^{\de_j} r \Big) r^{-j-1}, 
\end{equation}
where $a^\ga_{\de_1, \ldots, \de_j}$ are integers that only depend on $\ga$ and $\de_1,\ldots,\de_j$. 

\subsection{$\La_p$-stratification of definable sets}

Recall that a definable \emph{$C^p$-stratification} of a definable set $E \subseteq \R^n$ is a finite decomposition $\sS$
of $E$ into definable $C^p$-submanifolds of $\R^n$, called strata,
such that, for each stratum $S \in \sS$,
the frontier $(\p S) \cap E$ in $E$ is the union of some strata of dimension $< \dim S$. 
A stratification is called \emph{compatible} with a collection of finitely many definable subsets of $E$ 
if each subset is a union of strata.

A definable $C^p$-stratification $\sS$ of $E$ is called a \emph{$\La_p$-stratification} if each stratum $S \in \sS$ 
is a $\La_p$-cell in $\R^n$ in some linear coordinate system.

\begin{theorem}[{\cite[Proposition 4]{Kurdyka:1997ab} and \cite[Theorem 3]{Kurdyka:2014aa}}] \label{thm:Laregdecomp}
    Let $E \subseteq \R^n$ be a definable set and let $E_1,\ldots,E_\ell$ be definable subsets of $E$.
    Then there exists a $\La_p$-stratification $\sS$ of $E$ that is compatible with $E_1,\ldots,E_\ell$.
\end{theorem}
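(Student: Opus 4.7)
The plan is induction on the ambient dimension $n$. For $n=1$ the conclusion is immediate: by o-minimality, every definable subset of $\R$ compatible with $E_1,\ldots,E_\ell$ decomposes into finitely many open intervals (already open $\La_p$-cells) and points (zero-dimensional $\La_p$-cells), and the frontier condition is trivially satisfied.

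For the inductive step, I would first invoke the o-minimal $C^p$-cell decomposition theorem to obtain a finite definable $C^p$-cell decomposition of $\R^n$ compatible with $E$ and with $E_1,\ldots,E_\ell$. Each $C^p$-cell is, up to a permutation of coordinates, the graph of a definable $C^p$-map $\vh\colon T \to \R^{n-k}$, or a ``band'' $(\ps_1,\ps_2,T)$ between two such functions, over a $C^p$-cell $T\subseteq \R^k$ of lower dimension. The task is to refine this decomposition so that (in suitably chosen linear coordinates) all the occurring functions $\vh,\ps_1,\ps_2$ become $\La_p$-regular in the sense of \eqref{eq:Lapreg}, and the base $T$ itself is an open $\La_p$-cell in $\R^k$.

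To achieve $\La_p$-regularity of a defining function $\vh\colon T\to\R$, the first step is to arrange that its first partial derivatives are bounded; this is precisely the content of the uniform $L$-regular decomposition of Kurdyka--Parusi\'nski \cite{KurdykaParusinski06}, which, after a finite partition of $T$ and an orthogonal change of coordinates on each piece, reduces us to the case $|\p_j \vh|\le M$. Once the first derivatives are bounded, \Cref{cor:La1top} (applied to the trivial one-element family) produces a closed definable subset $Z\subseteq T$ with $\dim Z<k$ on whose complement $\vh$ satisfies the $\La_p$-bound with constant depending only on $k$, $p$, and $M$. I would then apply the inductive hypothesis in $\R^k$ to obtain a $\La_p$-stratification of $T$ compatible with $Z$, with the projections to $\R^k$ of all $E_i$ and of all loci where various $\vh,\ps_1,\ps_2$ fail to be $C^p$, and with the sets where $\ps_1=\ps_2$; the strata of $T$ lying outside $Z$ then serve as $\La_p$-bases over which $\vh$ (resp.\ $\ps_1,\ps_2$) is $\La_p$-regular, producing $\La_p$-cells of $\R^n$.

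The main obstacle is twofold. First, one must simultaneously $\La_p$-regularize several functions (the graph maps and the two fence functions of each band, plus the defining functions of all adjacent cells whose frontiers have to fit together) while keeping track of the frontier condition required of a stratification; this is managed by feeding into the inductive application on $T$ a sufficiently rich collection of auxiliary definable subsets (bad loci and equality loci) as compatibility data. Second, the orthogonal changes of coordinates supplied by $L$-regular decomposition vary from piece to piece, so compatibility between neighbouring cells along their common frontier must be checked; this is where the concave estimates \eqref{eq:try} of \Cref{prop:try} are used uniformly to guarantee that the Lipschitz extension $\ol\vh$ of $\vh$ to $\ol T$ behaves well across the boundary, allowing adjacent strata of different coordinate types to be glued into a stratification rather than merely a decomposition.
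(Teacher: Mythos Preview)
The paper does not prove \Cref{thm:Laregdecomp} itself; it is quoted from \cite{Kurdyka:1997ab,Kurdyka:2014aa}. The paper does, however, prove the uniform family version \Cref{thm:uniformLapstrat}, whose argument specialized to a singleton family is the natural point of comparison.

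Your core ingredients are correct and agree with the paper's: the $L$-regular decomposition of \cite{KurdykaParusinski06} (\Cref{thm:quasiconvex}) supplies $\La_1$-cells with bounded first derivatives, and \Cref{cor:La1top} (Gromov's inequality) upgrades the defining functions to $\La_p$-regularity off a lower-dimensional closed set $Z$. Two organizational differences are worth noting. The paper runs the induction on $k=\dim E$ rather than on the ambient $n$: one isolates a finite collection of $k$-dimensional $\La_p$-cells $T^j$ open in $E$, then applies the induction hypothesis to the remainder $E'=E\setminus\bigcup_j T^j$ (still inside $\R^n$), demanding compatibility with the subsets $E_i\cap E'$ and with the frontiers $\p T^j\cap E'$. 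That single compatibility requirement is exactly what delivers the frontier condition. Also, your opening invocation of plain $C^p$-cell decomposition is redundant, since \Cref{thm:quasiconvex} already produces the cell structure together with the Lipschitz bounds you need.

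Your second ``main obstacle'' is a misdiagnosis. The orthogonal coordinate changes may indeed differ from stratum to stratum, but nothing has to be matched across a common frontier: a $\La_p$-stratification only asks that each stratum be a $\La_p$-cell in \emph{some} linear coordinate system and that its frontier be a union of lower strata. The estimate \eqref{eq:try} of \Cref{prop:try} is not a gluing device; it is Gromov's inequality, and its sole role here (through \Cref{cor:La1top}) is the $\La_1\to\La_p$ upgrade already present in your second paragraph. The continuous extension $\ol\vh$ to $\ol T$ likewise plays no part in producing the stratification. If you drop this concern and handle the frontier condition purely through the compatibility data fed into the inductive call (as you correctly do for your first obstacle), the argument goes through.
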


\subsection{Uniform $\La_p$-stratifications of definable families of sets} 

We prove a uniform version of \Cref{thm:Laregdecomp}.
Let us first recall a result on uniform $L$-regular decompositions.

\begin{theorem}[{\cite[Proposition 1.4]{KurdykaParusinski06}}] \label{thm:quasiconvex}
    Let $E^i \subseteq \R^N \times \R^n$, where $i \in I$, be a finite collection 
    of definable sets.
    Then there exist finitely many disjoint definable sets $B^j \subseteq \R^N \times \R^n$, where $j \in J$, 
    and linear orthogonal mappings $\vh^j : \R^n \to \R^n$, where $j \in J$, such that:
    \begin{enumerate}
        \item For every $a \in \R^N$, each $\vh^j(B^j_a)$ is a standard $L$-regular cell in $\R^n$ 
            with constant $C=C(n)$.
        \item For every $a \in \R^N$, the family of $B^j_a$, where $j \in J$, is a stratification of $\R^n$.
        \item For any $i \in I$, there exists $J_i \subseteq J$ such that $E^i_a = \bigcup_{j \in J_i} B^j_a$ 
            for every $a \in \R^N$.
    \end{enumerate}
\end{theorem}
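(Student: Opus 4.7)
The plan is to proceed by induction on $n$, mirroring the structure of the (non-uniform) $L$-regular decomposition theorem, but carefully tracking that all stratification data depend definably on $a \in \R^N$ and that the $L$-regularity constants and the rotations $\vh^j$ are independent of $a$. The base case $n=1$ is immediate: apply o-minimal cell decomposition to the total space $\bigcup_i E^i \subseteq \R^N \times \R$ compatible with each $E^i$; each resulting cell has fibers that are either points or open intervals, and both are standard $L$-regular cells in $\R^1$, with $\vh^j = \mathrm{id}$.

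For the inductive step from $n-1$ to $n$, I would first apply o-minimal cell decomposition to $\R^N \times \R^n$ compatible with the $E^i$, producing cells of the form $\{(a,x',x_n) : (a,x') \in T, \ps_1(a,x') < x_n < \ps_2(a,x')\}$ (and graph cells). Projecting to $\R^N \times \R^{n-1}$ and invoking the inductive hypothesis gives a uniform $L$-regular decomposition of the base, after applying some orthogonal rotations in the $x'$-variables. The remaining task is to ensure that the graph/top functions $\ps_i(a,\cdot)$ are $L$-regular on each base cell with a uniform constant. Since o-minimality guarantees that the set of directions $v \in S^{n-1}$ along which $\ps_i(a,\cdot)$ fails to be Lipschitz (with a prescribed constant) is a proper definable subfamily of $\R^N \times S^{n-1}$, I can, after a further refinement of $T$, find a finite subset of orthogonal rotations of $\R^n$ such that one of them trivializes this bad-direction obstruction on each piece simultaneously for all $a$.

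The central uniformity observation is that the bad-direction subsets $B_a \subsetneq S^{n-1}$ form a definable family over $\R^N$; by o-minimality, the family of their complements has a definable section, and one can extract from this a single \emph{finite} collection of rotations $\vh^j \in O(n)$ that witnesses $L$-regularity fiberwise. The constants governing quasiconvexity and the Lipschitz bounds on $\ol \ps_i \circ (\vh^j)^{-1}$ can then be bounded by a quantity depending only on $n$, by choosing the rotations from a geometric covering of $S^{n-1}$ adapted to the angular tolerance in the definition of an $L$-regular cell. This then yields properties (1)--(3) of the statement.

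The main obstacle I anticipate is the step where one must show that a finite and $a$-independent family of rotations suffices: a priori, the rotation needed to make $\ps_i(a,\cdot)$ $L$-regular could move with $a$. Overcoming this requires exploiting not just fiberwise definability but the uniform structure of the total family $\{(a,v) : v \in B_a\}$, combined with a quantitative reformulation of $L$-regularity as an open condition on $O(n)$; together these allow one to cover $O(n)$ by finitely many open sets of rotations which collectively work for every $a$, and then a further refinement of $T$ absorbs the partition-of-unity choice into the cell structure.
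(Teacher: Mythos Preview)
The paper does not prove this theorem; it is quoted from Kurdyka--Parusi\'nski \cite[Proposition 1.4]{KurdykaParusinski06} as an established result and used as a black box (see the sentence ``Let us first recall a result on uniform $L$-regular decompositions'' preceding the statement). There is therefore no proof in the present paper to compare your attempt against.

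As for your sketch on its own merits: the inductive architecture is the right one, and you have correctly located the crux, namely passing from ``for each $a$ there exists a good rotation'' to ``a single finite set of rotations, independent of $a$, works for all $a$ after a further definable refinement of the base.'' However, the mechanism you offer for this passage is too vague to count as a proof. Saying that $L$-regularity is ``an open condition on $O(n)$'' and that one can ``cover $O(n)$ by finitely many open sets of rotations which collectively work for every $a$'' conflates two different things: openness in the rotation variable for fixed $a$, and uniformity over $a$. What is actually needed (and what the cited source supplies) is a quantitative lemma to the effect that the set of \emph{bad} directions for a definable function has positive codimension in the sphere, with the codimension and the defining data varying definably in $a$; one then chooses once and for all a fixed finite net in $S^{n-1}$ fine enough that no definable subset of the relevant dimension can contain all of it, and refines the base so that on each piece some fixed member of the net is good. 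Your proposal does not isolate this lemma, and the phrase ``a further refinement of $T$ absorbs the partition-of-unity choice into the cell structure'' is not a substitute for it. If you want to write this out, that lemma is where the real work lies.
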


Here a \emph{standard $L$-regular cell in $\R^n$ with constant $C=C(n)$} 
(which is terminology used in \cite{KurdykaParusinski06}) is by definition nothing else than a $\La_1$-cell with constant $C=C(n)$.

\begin{definition}[Uniform $\La_p$-stratification]
    Let $(E_a)_{a \in A}$ be a definable family of sets $E_a \subseteq \R^n$ and let $E \subseteq \R^N \times \R^n$ be the associated 
    definable set (see \eqref{eq:associatedset}).
    Let $p$ be a positive integer.

    A finite collection $\sS = \{S^j\}_{j\in J}$ of disjoint definable sets $S^j \subseteq \R^N \times \R^n$ is called    
    a \emph{uniform $\La_p$-stratification of $(E_a)_{a \in A}$} if 
    \begin{enumerate}
        \item there exist linear orthogonal maps $\vh^j : \R^n \to \R^n$, $j \in J$, such that,
            for each $a \in A$ and each $j \in J$, $\vh^j(S^j_a)$ is a $\La_p$-cell in $\R^n$
            with constant $C$ independent of $a \in A$,
        \item for each $a \in A$, the family $S^j_a$, $j \in J$, is a stratification of $E_a$.
    \end{enumerate}
    For all $a \in A$, let $\sS_a := \{S^j_a\}_{j \in J}$.  
    Abusing notation, we will also say that \emph{$(\sS_a)_{a \in A}$ is a uniform $\La_p$-stratification of $(E_a)_{a \in A}$}.

    Let $I$ be a finite index set and, for each $i \in I$, 
    let $(E^i_a)_{a \in A}$ be a definable subfamily of $(E_a)_{a \in A}$. 
    A uniform $\La_p$-stratification $(\sS_a)_{a \in A}$ of $(E_a)_{a \in A}$ is said to be 
    \emph{compatible with $(E^i_a)_{a \in A}$, $i \in I$,} if additionally
    \begin{enumerate}
        \item[(3)] for each $i \in I$, there exists a subset $J_i \subseteq J$ such that $E^i_a = \bigcup_{j \in J_i} S^j_a$ for each $a \in A$.
    \end{enumerate}
\end{definition}

By \Cref{thm:quasiconvex}, there always exist uniform $\La_1$-stratifications. 
We shall see that there exist uniform $\La_p$-stratifications for all $p\ge 1$.

\begin{theorem} \label{thm:uniformLapstrat}
    Let $(E_a)_{a \in A}$ be a definable family of sets $E_a \subseteq \R^n$ and let 
    $(E^i_a)_{a \in A}$, $i\in I$, be a finite collection of definable subfamilies of $(E_a)_{a \in A}$.
    Let $p$ be a positive integer.
    Then there exists a uniform $\La_p$-stratification $(\sS_a)_{a \in A}$ of $(E_a)_{a \in A}$ 
    compatible with $(E^i_a)_{a \in A}$, $i \in I$.
\end{theorem}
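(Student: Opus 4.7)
The plan is to prove \Cref{thm:uniformLapstrat} by induction on the ambient dimension $n$, adapting Kurdyka's proof of the non-uniform version \Cref{thm:Laregdecomp} and substituting uniform ingredients throughout.

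For $n = 1$, every cell in $\R$ is either an open interval or a point, and the notion of $\La_p$-cell imposes no regularity condition beyond being a cell. Thus, any o-minimal cell decomposition of the associated set $E \subseteq \R^N \times \R$ compatible with each $E^i$ directly produces a uniform $\La_p$-stratification compatible with $(E^i_a)_{i \in I}$.

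For the inductive step ($n \geq 2$), first apply \Cref{thm:quasiconvex} to $E$ and the $E^i$'s to obtain a uniform $\La_1$-stratification $(\sS^{(1)}_a)_{a \in A}$ of $(E_a)$ compatible with $(E^i_a)_{i \in I}$, realized through orthogonal maps $\vh^j$, $j \in J$. Process each cell $S^j$ separately. After applying $\vh^j$, the cell is in standard form over a base $T^j_a \subseteq \R^k$ with $k \leq n-1$, described by finitely many $\La_1$-regular defining functions $\psi^j$ (the pair $\psi_1^j,\psi_2^j$ for open cells or the components of $\vh^j$ for graph cells), with $\La_1$-constants depending only on $n$. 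Their first partial derivatives are therefore uniformly bounded in $a$, so \Cref{cor:La1top} yields a definable closed subfamily $Z^j_a \subseteq T^j_a$ of dimension $<k$ outside of which every $\psi^j$ is $\La_p$-regular with constants independent of $a$. Next invoke the inductive hypothesis in $\R^k$ applied to $(T^j_a)$ with the additional compatibility constraint $(Z^j_a)$; this yields a uniform $\La_p$-stratification of $(T^j_a)$ refining $(Z^j_a)$. For each resulting open stratum $T'$ (which is contained in $T^j_a \setminus Z^j_a$), the restriction $\psi^j|_{T'}$ is $\La_p$-regular on $T'$: indeed $\p T' \supseteq (Z^j_a \cup \p T^j_a) \cap \overline{T'}$, so $d(u,\p T') \leq d(u,Z^j_a \cup \p T^j_a)$, and since $1 - |\ga| \leq 0$ this converts the estimate from \Cref{cor:La1top} into the $\La_p$-regularity bound $|\p^\ga \psi^j(u)| \lesssim d(u,\p T')^{1-|\ga|}$. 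Reconstructing the open or graph cell over $T'$ then produces a $\La_p$-cell in standard form; the overall orthogonal map for this refined piece is the composition of $\vh^j$ with the map delivered by the inductive hypothesis extended by the identity on the $n-k$ transverse coordinates.

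The main obstacle is bookkeeping: keeping all constants uniform in $a$ while composing orthogonal maps and chaining the induction. All constants produced by \Cref{thm:quasiconvex} and \Cref{cor:La1top} depend only on $n$ and $p$, and orthogonal changes of coordinates alter $\La_p$-constants by at most a dimensional factor through the chain rule, so the uniformity in $a$ is preserved at every step. A secondary subtlety is that the inductive hypothesis must be applied simultaneously with multiple compatibility subfamilies (one $Z^j_a$ per initial cell $S^j$) in each base — but this is exactly what the statement being proved allows, so the induction closes cleanly.
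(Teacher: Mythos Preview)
Your induction on the ambient dimension $n$ does not close. After you apply the inductive hypothesis to the base $T^j_a \subseteq \R^k$ of a $\La_1$-cell $S^j_a$ and obtain a uniform $\La_p$-stratification of $T^j_a$, you correctly argue that the part of $S^j_a$ lying over each \emph{open} stratum $T'$ is a $\La_p$-cell. But you never address the parts of $S^j_a$ lying over the lower-dimensional strata $T''$ of the refined base. These leftover pieces are definable subsets of $\R^n$ (of dimension strictly less than $\dim S^j_a$) that must still be decomposed into $\La_p$-cells, and they are not automatically such: if $T'' \subseteq Z^j_a$ the defining map $\psi^j$ need not even be $C^p$ there, and even when $T'' \cap Z^j_a = \emptyset$ the $\La_p$-regularity of the composite $\psi^j \circ (\mu^\ell)^{-1} \circ (\id,\eta^\ell)$ on the parametrizing open cell $V^\ell \subseteq \R^d$ requires a comparison $d(v,\p V^\ell) \lesssim d(u, Z^j_a \cup \p T^j_a)$ that does not hold in general. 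Since your inductive hypothesis concerns ambient dimension $< n$, it cannot be invoked on these leftover pieces, which live in $\R^n$.

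The paper avoids this by inducting instead on $k = \max_{a \in A} \dim E_a$. In the inductive step it uses \Cref{thm:quasiconvex} and \Cref{cor:La1top} (essentially as you do) to extract finitely many $k$-dimensional $\La_p$-cells $T^j_a$ that are open in $E_a$ and compatible with the given subfamilies, with uniform constants; it then applies the inductive hypothesis directly to the complement $E'_a := E_a \setminus \bigcup_j T^j_a$ (which satisfies $\max_a \dim E'_a < k$) together with the additional compatibility subfamilies $E^i_a \cap E'_a$ and $(\p T^j_a) \cap E'_a$. Because the leftover stays in the same ambient $\R^n$ but has strictly smaller dimension, this induction closes. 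Your argument can be repaired by switching to this induction variable, or equivalently by adding an inner induction on $\dim S^j_a$ to handle the leftover pieces; as written, the single induction on $n$ is not enough.
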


\begin{proof}
    Let $k = \max_{a \in A} \dim E_a$. 
    We proceed by induction on $k$.
    If $k=0$, then all $E_a$ are finite and the number of elements of $E_a$ is bounded by a constant independent of $a$. 
    In that case, the assertion is trivially true. 

    Suppose that $k>0$. 
    We claim that 
    there exist a finite collection of disjoint definable sets $T^j \subseteq \R^N \times \R^n$, $j \in J$, 
    and linear orthogonal maps $\vh^j : \R^n \to \R^n$, $j \in J$, 
    such that, for each $a \in A$ and each $j \in J$,
    \begin{itemize}
        \item $T^j_a$ is either empty or open in $E_a$ and compatible with $E^i_a$, $i \in I$,
        \item if $T^j_a \ne \emptyset$ then $\vh^j(T^j_a)$ 
            is a $k$-dimensional $\La_p$-cell in $\R^n$ 
            with constant $C$ independent of $a\in A$, and
        \item $\dim E_a \setminus \bigcup_{j \in J} T^j_a < k$. 
    \end{itemize}
    We allow $T^j_a=\emptyset$ to account for the case $\dim E_a <k$.

    Then we can use the induction hypothesis for the definable family
    $(E'_a)_{a \in A}$, where $E'_a := E_a \setminus \bigcup_{j \in J} T^j_a$, 
    and the definable subfamilies $(E^i_{a} \cap E'_a)_{a \in A}$, $i \in I$, 
    and $((\p T^j_{a}) \cap E'_a)_{a \in A}$, 
    $j \in J$. The statement follows.

    Let us prove the claim.
    \Cref{thm:quasiconvex} implies that the claim holds for $p=1$: 
    let $T^j$, $j \in J$, be the corresponding sets with all the properties as listed in the claim.
    Now we apply \Cref{cor:La1top} and induction on the dimension. 
    In fact, for each fixed $j \in J$, 
    $(T^j_{a})_{a \in A}$ is a definable family of $k$-dimensional $\La_1$-cells $T^j_{a}$
    in $\R^n$
    that are open in $E_a$. 
    After the change of coordinates $\vh^j$, we may assume that $T^j_a$ is a $\La_1$-cell with constant $C$ 
    independent of $a \in A$.
    By \Cref{cor:La1top}, there is a definable family $(Z^j_{a})_{a \in A}$ of closed definable sets $Z^j_{a} \subseteq T^j_{a}$, 
    $\dim Z^j_{a} <k$, such that the functions defining the cell $T^j_{a}$ are $\La_p$-regular with uniform constants independent of $a \in A$
    in the complement of $Z^j_{a}$. 
    Thus there exists a definable family $(S^j_{a})_{a \in A}$ of subsets $S^j_{a} \subseteq T^j_{a}$ 
    such that, for all $a \in A$, $S^j_{a}$ is a finite disjoint union of $k$-dimensional definable $\La_p$-cells $S^{j,\ell}_{a}$ 
    that are open in $E_a$
    with constant $C$ independent of $a \in A$
    and $\dim T^j_{a} \setminus S^j_{a} < k$.
    Thus the number of connected components $S^{j,\ell}_{a}$ of $S^j_{a}$ is uniformly bounded by a constant independent of $a \in A$. 
    This implies the claim.
\end{proof}

\subsection{Consequences}

We may use \Cref{thm:uniformLapstrat} in order to refine \Cref{prop:try}, \Cref{cor:La1top}, and \Cref{prop:Gromov3}.

\begin{corollary} \label{cor:try}
    Let $(U_a)_{a \in A}$ be a definable family of open sets $U_a \subseteq \R^k$.
    Let $(\vh_a)_{a \in A}$ be a definable family of functions $\vh_a : U_a \to \R$.
    Let $p$ be a nonnegative integer.
    There exists a uniform $\La_p$-stratification $(\sS_a)_{a \in A}$ of $(U_a)_{a \in A}$ such that, 
    for all $a \in A$ and each open stratum $S_a \in \sS_a$, $\vh_a$ is $C^p$ on $S_a$ and 
    \[
        |\p^\ga \vh_a(u)| \le C(k,p)\, \frac{\sup\{|\vh_a(v)| : v \in S_a,\, |v-u| < d(u,\p S_a)\}}{d(u,\p S_a)^{|\ga|}}, \quad u \in S_a,
        \, |\ga| \le p.
    \] 
\end{corollary}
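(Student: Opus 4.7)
The plan is to combine \Cref{prop:try} with the uniform $\La_p$-stratification theorem \Cref{thm:uniformLapstrat}, using the former to locate the bad set of $(\vh_a)$ and the latter to promote it to a stratification with the desired geometric properties.

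First, I would apply \Cref{prop:try} once for each multi-index $\al \in \N^k$ with $|\al| \le p$, obtaining a definable set $Z^\al \subseteq U$ (where $U \subseteq \R^N \times \R^k$ is the set associated to $(U_a)_{a\in A}$) whose fibers are closed of dimension $< k$, outside of which $\vh_a$ is $C^p$, and on every open ball $B \subseteq U_a \setminus Z^\al_a$ the pointwise bound from \eqref{eq:try} holds for that $\al$. Setting $Z := \bigcup_{|\al|\le p} Z^\al$, the fibers $Z_a$ are still definable, closed in $U_a$, and of dimension $< k$, and for every ball $B = B(u,r) \subseteq U_a \setminus Z_a$,
\[
    |\p^\ga \vh_a(u)| \le C(k,p) \, \frac{\sup_{v \in B} |\vh_a(v)|}{r^{|\ga|}}, \quad |\ga| \le p.
\]

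Next, I would apply \Cref{thm:uniformLapstrat} to $(U_a)_{a \in A}$, compatible with the definable subfamily $(Z_a)_{a\in A}$, producing a uniform $\La_p$-stratification $(\sS_a)_{a \in A}$ of $(U_a)_{a \in A}$ in which $Z_a$ is a union of strata. Since $\dim Z_a < k$, these strata all have dimension $<k$; consequently each $k$-dimensional (hence open in $\R^k$) stratum $S_a \in \sS_a$ is disjoint from $Z_a$, so $S_a \subseteq U_a \setminus Z_a$ and $\vh_a$ is $C^p$ on $S_a$.

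Finally, for any open stratum $S_a$ and any $u \in S_a$, the fact that $S_a$ is open in $\R^k$ implies that the ball $B := B(u, d(u,\p S_a))$ is entirely contained in $S_a$, and therefore in $U_a \setminus Z_a$. Applying the previous estimate with $r = d(u,\p S_a)$ and observing that $B \cap S_a = B$, so that $\sup_{v \in B} |\vh_a(v)| = \sup\{|\vh_a(v)| : v \in S_a,\, |v-u| < d(u,\p S_a)\}$, yields the claimed inequality. The only delicate point is the geometric observation that $B(u,d(u,\p S_a)) \subseteq S_a$ for open strata; this uses only the openness of $S_a$ in $\R^k$, but it is essential because \Cref{prop:try} only gives estimates on balls lying in $U_a \setminus Z_a$, not on the possibly non-convex set $U_a \setminus Z_a$ itself.
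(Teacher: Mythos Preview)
Your proposal is correct and follows essentially the same route as the paper, which simply says the corollary follows from \Cref{thm:uniformLapstrat} and \Cref{prop:try}; you have spelled out the details of combining these two results (locating the bad set $Z$ via \Cref{prop:try}, then stratifying compatibly with it and noting that open strata avoid $Z$ so the ball $B(u,d(u,\partial S_a))$ lies in $U_a\setminus Z_a$). The only cosmetic remark is that \Cref{prop:try} is literally stated for a single $\al$ with $|\al|=p$, so your union over $|\al|\le p$ implicitly invokes it for each order $\le p$; this is harmless since the case $|\al|=p$ already forces $C^{p}$-regularity off $Z$.
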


\begin{proof}
    This follows from \Cref{thm:uniformLapstrat} and \Cref{prop:try}.
\end{proof}

\begin{corollary} \label{cor:La1top2}
    Let $(U_a)_{a \in A}$ be a definable family of open sets $U_a \subseteq \R^k$.
    Let $(\vh_a)_{a \in A}$ be a definable family of $C^1$-functions $\vh_a : U_a \to \R$.
    Suppose that there is a constant $M> 0$ such that
    \begin{equation*}
        |\p_j \vh_a(u)| \le M,\quad a \in A,\, u \in U_a,\,  j =1,\ldots,k.
    \end{equation*}
    Let $p$ be a positive integer.
    There exists a uniform $\La_p$-stratification $(\sS_a)_{a \in A}$ of $(U_a)_{a \in A}$ such that, 
    for all $a \in A$ and each open stratum $S_a \in \sS_a$, $\vh_a$ is $C^p$ on $S_a$ and 
    \[
        |\p^\ga \vh_a(u)| \le C(k,p)\, M\, d(u,\p S_a)^{1 -|\ga|}, \quad u \in S_a,
        \, 1 \le |\ga| \le p.
    \] 
\end{corollary}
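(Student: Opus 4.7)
The plan is to mirror the derivation of \Cref{cor:try} from \Cref{prop:try}, this time combining \Cref{cor:La1top} with the uniform $\La_p$-stratification theorem \Cref{thm:uniformLapstrat}.

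First I would apply \Cref{cor:La1top} to the family $(\vh_a)_{a \in A}$ to obtain a definable family $(Z_a)_{a \in A}$ of closed subsets $Z_a \subseteq U_a$ with $\dim Z_a < k$ such that $\vh_a$ is of class $C^p$ on $U_a \setminus Z_a$ and
\[
    |\p^\ga \vh_a(u)| \le C(k,p)\, M\, d(u,Z_a \cup \p U_a)^{1-|\ga|}
\]
for $u \in U_a \setminus Z_a$ and $1 \le |\ga| \le p$. Next, I would invoke \Cref{thm:uniformLapstrat} on $(U_a)_{a \in A}$ compatibly with the subfamily $(Z_a)_{a \in A}$, producing a uniform $\La_p$-stratification $(\sS_a)_{a \in A}$ of $(U_a)_{a \in A}$. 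Because $\dim Z_a < k$ and the stratification is compatible with $Z_a$, every $k$-dimensional stratum $S_a \in \sS_a$ is automatically contained in $U_a \setminus Z_a$, so the estimate above is valid at every $u \in S_a$ and $\vh_a$ is of class $C^p$ on each such $S_a$.

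The only remaining step is to replace $d(u, Z_a \cup \p U_a)$ by $d(u, \p S_a)$. For this, I would use a short line-segment argument: for $u \in S_a$ and any $y \in Z_a \cup \p U_a$ (which, by the remark following \Cref{cor:La1top}, we may take to be nonempty), the segment $[u,y]$ begins in the open set $S_a$ and ends outside $S_a$, so it exits $S_a$ at a point $z \in \p S_a$ satisfying $|u-z| \le |u-y|$. Taking the infimum over $y$ yields $d(u, \p S_a) \le d(u, Z_a \cup \p U_a)$. Since $t \mapsto t^{1-|\ga|}$ is nonincreasing for $|\ga| \ge 1$, this turns the estimate supplied by \Cref{cor:La1top} into the required inequality
\[
    |\p^\ga \vh_a(u)| \le C(k,p)\, M\, d(u,\p S_a)^{1-|\ga|}.
\]

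I do not expect a genuine obstacle here; both ingredients are already available and the argument is essentially a compatibility check. The only conceptual point worth highlighting is that the stratification may introduce additional interior boundaries between open strata (so $\p S_a$ need not be contained in $Z_a \cup \p U_a$), but this only shrinks $d(u,\p S_a)$, which \emph{weakens} the bound in exactly the direction we need, so the implication still goes through.
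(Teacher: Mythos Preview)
Your proposal is correct and follows exactly the approach the paper intends: the paper's proof is the single line ``This follows from \Cref{thm:uniformLapstrat} and \Cref{cor:La1top},'' and you have spelled out precisely those two steps together with the routine distance comparison $d(u,\p S_a)\le d(u,Z_a\cup\p U_a)$ that makes the implication go through.
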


\begin{proof}
    This follows from \Cref{thm:uniformLapstrat} and \Cref{cor:La1top}.
\end{proof}

\begin{proposition} \label{prop:uniformGromov4}
    Let $(U_a)_{a \in A}$ be a definable family of open sets $U_a \subseteq \R^k$.
    Let $(\vh_a)_{a \in A}$ be a definable family of continuous functions $\vh_a : U_a \to \R$. 
    Let $p$ be a positive integer.
    There exists 
    a uniform $\La_p$-stratification $(\sS_a)_{a \in A}$  of $(U_a)_{a \in A}$ such that, 
    for all $a \in A$ and each open stratum $S_a \in \sS_a$, 
    $\vh_a$ is $C^p$ on $S_a$ and
    \[
        |\p^\ga \vh_{a}(u)| \le C(k,p)\, \frac{\om(d(u,\p S_a))}{d(u,\p S_a)^{|\ga|}},\quad  u \in S_a, \, 1\le |\ga|\le p,
    \]
    where $\om$ is a modulus of continuity for $\vh_a$.
\end{proposition}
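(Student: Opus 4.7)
The plan is to combine the pointwise Gromov-type bound from \Cref{prop:Gromov3} with the uniform $\La_p$-stratification from \Cref{thm:uniformLapstrat}, following the same template as \Cref{cor:try} and \Cref{cor:La1top2}.

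First I would apply \Cref{prop:Gromov3} to $(\vh_a)_{a \in A}$ to obtain a definable family $(Z_a)_{a \in A}$ of closed subsets with $\dim Z_a < k$ such that $\vh_a$ is $C^p$ on $U_a \setminus Z_a$ and the estimate \eqref{eq:Gromov3} holds. By the remark following \Cref{prop:Gromov3}, the construction of $(Z_a)_{a \in A}$ is independent of $\om$, which is essential here. Next I would apply \Cref{thm:uniformLapstrat} to $(U_a)_{a \in A}$ with the subfamily $(Z_a)_{a \in A}$ to produce a uniform $\La_p$-stratification $(\sS_a)_{a \in A}$ of $(U_a)_{a \in A}$ compatible with $(Z_a)_{a \in A}$. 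Because $Z_a$ is a union of strata of dimension at most $\dim Z_a < k$, every open (i.e., $k$-dimensional) stratum $S_a \in \sS_a$ lies in $U_a \setminus Z_a$, and hence $\vh_a$ is automatically $C^p$ on $S_a$.

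The only remaining task is to replace $d(u, Z_a \cup \p U_a)$ by $d(u, \p S_a)$ in the estimate \eqref{eq:Gromov3}. Since $\sS_a$ refines the decomposition induced by $Z_a$ and $\p U_a$, one has $d(u, \p S_a) \le d(u, Z_a \cup \p U_a)$ for every $u \in S_a$. The key elementary fact is that, for a concave modulus of continuity $\om$ with $\om(0^+) = 0$, the function $t \mapsto \om(t)/t$ is non-increasing on $(0,\infty)$ (being the slope of the secant from $(0,0)$ to $(t, \om(t))$). Writing $d := d(u, \p S_a)$ and $D := d(u, Z_a \cup \p U_a)$, for $1 \le |\ga| \le p$ this gives
\[
\frac{\om(D)}{D^{|\ga|}} = \frac{\om(D)}{D} \cdot \frac{1}{D^{|\ga|-1}} \le \frac{\om(d)}{d} \cdot \frac{1}{d^{|\ga|-1}} = \frac{\om(d)}{d^{|\ga|}},
\]
which yields the desired bound with the same constant $C(k,p)$ furnished by \Cref{prop:Gromov3}.

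I do not foresee any serious obstacle: the argument is a direct assembly of two tools already proved in this section. The only minor point meriting explicit mention is the monotonicity of $\om(t)/t$, which is what allows the estimate on the larger quantity $d(u, Z_a \cup \p U_a)$ to be replaced by the corresponding one in terms of $d(u, \p S_a)$ without losing the constant.
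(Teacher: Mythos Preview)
Your proposal is correct and follows exactly the route the paper indicates (the paper's proof is the one-liner ``This follows from \Cref{thm:uniformLapstrat} and \Cref{prop:Gromov3}''). Your explicit use of the monotonicity of $t \mapsto \om(t)/t$ to pass from $d(u, Z_a \cup \p U_a)$ to the smaller quantity $d(u,\p S_a)$ is precisely the detail that makes the one-liner work; the inequality $d(u,\p S_a) \le d(u, Z_a \cup \p U_a)$ holds because $S_a$ is open and disjoint from $Z_a \cup \p U_a$, so any segment from $u \in S_a$ to a point of $Z_a \cup \p U_a$ must first meet $\p S_a$.
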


\begin{proof}
    This follows from \Cref{thm:uniformLapstrat} and \Cref{prop:Gromov3}.
\end{proof}

We will need another uniform fact:

\begin{proposition} \label{prop:forr}
    Let $(U_a)_{a \in A}$ be a definable family of open sets $U_a \subseteq \R^k$. 
    Let $(\vh_a)_{a \in A}$ be a definable family of functions $\vh_a : U_a \to \R$. 
    Let $p$ be a positive integer. 
    There exists a uniform $\La_p$-stratification $(\sS_a)_{a \in A}$ of $(U_a)_{a \in A}$ such that, 
    for all $a \in A$ and each open stratum $S_a \in \sS_a$, 
    $\vh_a$ is $C^p$ on $S_a$ 
    and, for all $j = 1,\ldots, k$,  
    \begin{equation} \label{eq:forr}
        \text{ either }\quad |\p_j \vh_a|\ge 1 \text{ on } S_a \quad \text{ or } \quad |\p_j \vh_a| <1 \text{ on } S_a.
    \end{equation}
\end{proposition}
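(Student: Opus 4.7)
The plan is to combine \Cref{cor:try} with \Cref{thm:uniformLapstrat}: first produce a stratification that makes $\vh_a$ of class $C^p$ on open cells, then refine it so that on each refined open cell each partial derivative $\p_j \vh_a$ lies entirely on one side of the threshold $1$.

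First, I would apply \Cref{cor:try} to $(\vh_a)_{a \in A}$ to obtain a uniform $\La_p$-stratification $(\sT_a)_{a \in A}$ of $(U_a)_{a \in A}$ such that, for each $a \in A$ and each open stratum $T_a \in \sT_a$, $\vh_a$ is of class $C^p$ on $T_a$. On the union of the open strata, the partial derivatives $\p_j \vh_a$ are then well-defined definable functions, which produces, for each $j = 1,\ldots,k$, the definable subfamily
\[
    E^j_a := \bigl\{ u \in U_a : u \in T_a \text{ for some open stratum } T_a \in \sT_a \text{ and } |\p_j \vh_a(u)| \ge 1 \bigr\}
\]
of $(U_a)_{a \in A}$.

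Next, I would apply \Cref{thm:uniformLapstrat} to $(U_a)_{a \in A}$ with the finite collection of definable subfamilies given by the individual strata of $(\sT_a)_{a \in A}$ together with the subfamilies $(E^j_a)_{a \in A}$ for $j=1,\ldots,k$. This yields a uniform $\La_p$-stratification $(\sS_a)_{a \in A}$ of $(U_a)_{a \in A}$ compatible with all of them.

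To conclude, let $S_a \in \sS_a$ be an open stratum. Since $\dim S_a = k$ and $(\sS_a)_{a \in A}$ is compatible with the strata of $(\sT_a)_{a \in A}$, the cell $S_a$ must be contained in some open stratum of $\sT_a$; hence $\vh_a$ is of class $C^p$ on $S_a$. By compatibility with $(E^j_a)_{a \in A}$, for each $j$ the cell $S_a$ is either entirely contained in $E^j_a$ or entirely disjoint from $E^j_a$, which gives the dichotomy \eqref{eq:forr}. I do not expect any serious obstacle, since all of the required uniformity has already been built into \Cref{cor:try} and \Cref{thm:uniformLapstrat}; the only small point to verify is that refining preserves the $C^p$-property, which is immediate from the dimension argument above.
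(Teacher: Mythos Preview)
Your argument is correct and takes a slightly different route from the paper. The paper works directly with the associated function $\vh$ on $U$: it first removes the definable set $X$ where $\vh$ fails to be $C^{p+1}$ in $u$, and then, for each $j$, removes the ``thin'' part $Z_j$ of the level set $\{|\p_j\vh|=1\}$ (points where $|\p_j\vh|=1$ but not identically so on a neighborhood). The union $Z = X \cup \bigcup_j Z_j$ has fibers $Z_a$ closed in $U_a$ with $\dim Z_a<k$, and a single application of \Cref{thm:uniformLapstrat} compatible with $(Z_a)_{a\in A}$ finishes the proof: each open stratum lies in $U_a\setminus Z_a$, and connectedness forces the dichotomy \eqref{eq:forr}. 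You instead first invoke \Cref{cor:try} to secure $C^p$-regularity on open cells, and then refine compatibly with the superlevel sets $E^j_a=\{|\p_j\vh_a|\ge 1\}$. This costs an additional pass through \Cref{thm:uniformLapstrat} (one hidden inside \Cref{cor:try}, one explicit), but it sidesteps both the analysis of the level set $\{|\p_j\vh|=1\}$ and the connectedness argument; either approach is entirely adequate here.
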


\begin{proof}
    Let $U \subseteq \R^N \times \R^n$ and $\vh : U \to \R$ be the associated definable set and function (see \eqref{eq:associatedset} 
    and \eqref{eq:associatedfunction}). 
    Let $X \subseteq U$ be the set defined in the proof of \Cref{prop:try}. 

    For $j =1,\ldots,k$, let $\p_j \vh(a,u):=\p_j \vh_a(u)$ and set 
    \begin{align*}
        Y_j &:= \{(a,u) \in U \setminus X : \exists \ep>0 \, \forall v \in B(u,\ep), \p_j\vh(a,v) = 1 \},
        \\
        Z_j &:= \{(a,u) \in U \setminus X : \p_j\vh(a,u)=1\} \setminus  Y_j, 
        \\ 
        Z &:= X \cup \bigcup_{j=1}^k Z_j. 
    \end{align*}
    Then $Z$ is a definable subset of $U$ and, for all $a \in A$, $Z_a$ is closed in $U_a$ and $\dim Z_a < k$.
    Now the statement follows from \Cref{thm:uniformLapstrat}.
\end{proof}

\section{Bounded definable families of Whitney jets} \label{sec:jets}

Recall that a modulus of continuity $\om$ is by definition a positive, continuous, increasing, 
and concave function $\om : (0,\infty) \to (0,\infty)$ 
such that $\om(t) \to 0$ as $t \to 0$.

\subsection{$C^{m,\om}$-functions}

Let $\om$ be a modulus of continuity. Let $U \subseteq \R^n$ be an open set.
Let $C^{0,\om}(U)$ be the set of all continuous bounded functions $f : U \to \R$ such that 
\[
    |f|_{C^{0,\om}(U)} := \inf\{C>0 : |f(x)-f(y)| \le C\, \om(|x-y|) \text{ for all }x,y \in U\} < \infty.
\]
For a nonnegative integer $m$, the set $C^{m,\om}(U)$ consists of all $C^m$-functions such that $\p^\al f$ is 
globally bounded for all $|\al|\le m$ and $\p^\al f \in C^{0,\om}(U)$ for all $|\al|=m$.
Then $C^{m,\om}(U)$ is a Banach space with the norm
\[
    \|f\|_{C^{m,\om}(U)} := \sup_{x \in U} \sup_{|\al|\le m} |\p^\al f(x)| 
    + \sup_{|\al|= m}|\p^\al f|_{C^{0,\om}(U)}.
\]
We say that $f \in C^{m,\om}(U)$ is \emph{$m$-flat} outside an open set $V \subseteq U$ 
if all $\p^\al f$, $|\al| \le m$, 
vanish on $U \setminus V$.

Assume that the open set $U \subseteq \R^n$ is definable. 
We denote by $C^{m,\om}_{\on{def}}(U)$ the subspace of $C^{m,\om}(U)$ 
consisting of the definable functions in the latter space. 
Note that the normed space $C^{m,\om}_{\on{def}}(U)$ is not complete.

\begin{definition}[Bounded families of $C^{m,\om}$-functions]
    Let $m \in \N$ and $\om$ a modulus of continuity.
    A family $(f_a)_{a \in A}$ of $C^{m,\om}$-functions $f_a : U_a \to \R$, where $U_a \subseteq \R^n$ is open, 
    is said to be a \emph{bounded family of $C^{m,\om}$-functions} if 
    \begin{equation*}
        \sup_{a \in A} \|f_a\|_{C^{m,\om}(U_a)} < \infty.
    \end{equation*}
    We say that $(f_a)_{a \in A}$ is a \emph{definable bounded family of $C^{m,\om}$-functions} if
    it is a bounded family of $C^{m,\om}$-functions and, additionally, the families 
    $(U_a)_{a \in A}$ and $(f_a)_{a \in A}$ are definable. 
    Moreover, $(f_a)_{a \in A}$ is called \emph{$m$-flat outside $(V_a)_{a \in A}$} if, 
    for each $a \in A$, $V_a \subseteq U_a$ is open and $f_a$ is $m$-flat on $U_a \setminus V_a$.
    We will say that $(f_a)_{a \in A}$ is \emph{$C^p$ outside $(E_a)_{a \in A}$} if, 
    for each $a \in A$, $E_a \subseteq U_a$ is closed and $f_a$ is $C^p$ on $U_a \setminus E_a$.
\end{definition}

\subsection{Whitney jets of class $C^{m,\om}$}

Let $E$ be a locally closed subset of $\R^n$. 
An \emph{$m$-jet on $E$} is a collection $F=(F^\al)_{|\al|\le m}$ of continuous functions $F^\al : E \to \R$. 
An $m$-jet $F = (F^\al)_{|\al|\le m}$ on $E$ is said to be \emph{flat} on a subset $E' \subseteq E$ 
if all functions $F^\al$, $|\al|\le m$, vanish on $E'$.

For $a \in E$ we denote by 
$T^m_a  F$ the Taylor polynomial
\[
    T^m_a F (x) = \sum_{|\al|\le m} \frac{1}{\al!} F^\al(a) (x-a)^\al, \quad x \in \R^n,
\]
and define the $m$-jet 
\[
    R^m_a F := F - J_E^m (T^m_a F),
\]
where $J_E^m(f) := (\p^\al f|_E)_{|\al|\le m}$ for $f \in C^m(\R^n)$.

A $C^{m,\om}$ (or $C^m$) function $f : \R^n \to \R$ is an \emph{extension to $\R^n$ of $F$} if
$J_E^m(f) = F$. A necessary and sufficient condition for an $m$-jet to have a $C^{m,\om}$-extension to $\R^n$ 
is to be a Whitney jet of class $C^{m,\om}$ (\cite{Whitney34a}, \cite{Glaeser:1958aa}):
by definition,
an $m$-jet $F=(F^\al)_{|\al|\le m}$ on $E$ is a 
\emph{Whitney jet of class $C^{m,\om}$ on $E$} if 
there exists $C>0$ such that 
\begin{equation} \label{eq:W1}
    \sup_{x \in E} \sup_{|\al|\le m} |F^\al(x)| \le C
\end{equation}
and, for all $x,y \in E$ and $|\al|\le m$, 
\begin{equation} \label{eq:W2}
    |(R^m_x F)^\al(y)|\le C \, \om(|x-y|) |x-y|^{m-|\al|}.
\end{equation}

\begin{remark} \label{rem:equivW2}
    A condition equivalent to \eqref{eq:W2} is 
    \begin{equation*} 
        |T^m_x F(z) - T^m_y F(z)| \le C' \, \om(|x-y|) (|z-x|^m+|z-y|^m)
    \end{equation*}
    for all $x,y \in E$ and $z \in \R^n$; see \cite[Proposition IV.1.5]{Tougeron72}.
    Moreover,
    \eqref{eq:W2} holds if and only 
    if
    \begin{align*}
        |F^0(x) - T^m_y F(x)| \le C\, \om(|x-y|) |x-y|^m  \quad \text{ for all $x,y \in E$,}
        \\ \intertext{and, if $m\ge 1$,}
        \p_i F:=(F^{\al+(i)})_{|\al|\le m-1} \text{ is a Whitney jet of class } C^{m-1,\om} \text{ for all } i=1,\ldots,n. 
    \end{align*}
    If $E$ is quasiconvex with constant $C'$ (see \Cref{def:quasiconvex}), then \eqref{eq:W2} follows from 
    \[
        |F^\al(x) - F^\al(y)| \le C''\, \om(|x-y|), \quad x,y \in E,\, |\al| = m;
    \]
    see \cite[IV (2.5.1)]{Tougeron72}; then the constant $C$ in \eqref{eq:W2} depends only on $n$, $m$, $C'$, and $C''$.
\end{remark}

It is not hard to see that the set of all Whitney jets of class $C^{m,\om}$ on $E$ 
with the natural addition and the multiplication $FG := J_E^m(T^m F \cdot T^m G)$
is an $\R$-algebra. 

Let $F$ be an $m$-jet on $E \subseteq \R^n$.
Let $G_1,\ldots,G_n$ be $m$-jets on $A \subseteq \R^k$
such that $(G_1^0,\ldots,G_n^0)(A) \subseteq E$.
The \emph{composite} $F \o G = F\o(G_1,\ldots,G_n)$ of $m$-jets $F$ and $G$ on $A$ is defined by  
\[
    (F\o G)(x) := J_A^m (T^m_{G^0(x)} F \o T^m_x G)(x).
\]
Note that
\[
    T^m_y(F \o G)(x) = \pi_m  \big(T^m_{G^0(y)} F (T^m_y G(x))\big),
\]
where $\pi_m$ is the natural truncation operator (which truncates monomials of order $> m$).
One can show (using \Cref{rem:equivW2}) that, for $m\ge 1$, the composite $F \o G$ is a Whitney jet of class $C^{m,\om}$ 
if $F$ and $G$ are Whitney jets of class $C^{m,\om}$. We will not use this fact, 
but the pullback of Whitney jets of class $C^{m,\om}$ along a $\La_m$-regular map will be crucial; see \Cref{prop:uniformpullback}.

\begin{definition}[Bounded families of Whitney jets of class $C^{m,\om}$]
    A family $(F_a)_{a \in A}$ of Whitney jets $F_a$ of class $C^{m,\om}$ on $E_a \subseteq \R^n$ 
    is said to be a \emph{bounded family of Whitney jets of class $C^{m,\om}$} if 
    the constant $C>0$ in \eqref{eq:W1} and \eqref{eq:W2} can be chosen independent of $a \in A$, that is,
    \begin{align}
    &\sup_{a \in A} \sup_{x \in E_a} \sup_{|\ga|\le m} |F^\ga_a(x)| < \infty \label{eq:uniformW1}
    \intertext{and}
    &\sup_{a \in A} \sup_{x\ne y \in E_a} \sup_{|\ga|\le m} \frac{|(R^m_x F_a)^\ga (y)|}{\om(|x-y|) |x-y|^{m-|\ga|}} < \infty.
    \label{eq:uniformW2}
    \end{align}
    We say that $(F_a)_{a \in A}$ is a \emph{definable bounded family of Whitney jets of class $C^{m,\om}$} if
    it is a bounded family of Whitney jets of class $C^{m,\om}$ and, additionally, the families 
    $(E_a)_{a \in A}$ and $(F^\ga_a)_{a \in A}$, $|\ga| \le m$, are definable.
    We say that it is \emph{flat} on a subfamily $(E'_a)_{a \in A}$ of $(E_a)_{a \in A}$ if 
    $F_a$ is flat on $E'_a$ for all $a \in A$.

    A (definable bounded) family $(f_a)_{a \in A}$ of $C^{m,\om}$-functions $f_a : \R^n \to \R$ 
    is called a \emph{(definable bounded) family of $C^{m,\om}$-extensions to $\R^n$ of $(F_a)_{a \in A}$} if 
    $f_a$ is a $C^{m,\om}$-extension of $F_a$ to $\R^n$, for each $a \in A$.
\end{definition}

\subsection{Separation}

Let $X,Y,Z$ be subsets of $\R^n$.
Recall that $X$ and $Y$ are said to be \emph{$Z$-separated} if there exists $C>0$ such that 
\begin{equation} \label{eq:separation}
    d(x,Y) \ge C\, d(x,Z), \quad x \in X,
\end{equation}
or equivalently, if there is $C'>0$ such that
\[
    d(x,X) + d(x,Y) \ge C' \, d(x,Z),\quad x \in \R^n.
\]
If $X$ and $Y$ are $X \cap Y$-separated, then we will simply say that $X$ and $Y$ are \emph{separated}.

\begin{definition}[Uniformly separated families of sets]
    Let $(X_a)_{a \in A}$, $(Y_a)_{a \in A}$, and $(Z_a)_{a \in A}$ be definable families of subsets of $\R^n$.
    Then $(X_a)_{a \in A}$ and $(Y_a)_{a \in A}$ are said to be \emph{uniformly $(Z_a)_{a \in A}$-separated} 
    if, for all $a \in A$, $X_a$ and $Y_a$ are $Z_a$-separated with a constant $C>0$ (in \eqref{eq:separation}) independent of $a \in A$.
    We will say that $(X_a)_{a \in A}$ and $(Y_a)_{a \in A}$ are \emph{uniformly separated}  
    if they are uniformly $(X_a \cap Y_a)_{a \in A}$-separated.
\end{definition}

\subsection{Pullback along a definable family of $\La_p$-regular maps}

Let $\vh : U \to \R^\ell$ be a $\La_{m+1}$-regular map, where $U \subseteq \R^k$ is open and quasiconvex.
Let $\ol \vh : \ol U \to \R^\ell$ be the continuous extension of $\vh$; see \Cref{ssec:regmap}. 
Consider 
\begin{align*}
    \vh_+ &: U \times \R^\ell \to U \times \R^\ell, \quad (u,w) \mapsto (u,w+ \vh(u)),
    \intertext{and}
    \ol \vh_+ &: \ol U \times \R^\ell \to \ol U \times \R^\ell, \quad (u,w) \mapsto (u,w+ \ol\vh(u)).
\end{align*}
Let $M$ be a closed subset of $U \times \R^\ell$ and $F$ an $m$-jet on $M$. 
The \emph{pullback of $F$ along $\vh_+$} is the $m$-jet
\[
    \vh_+^*F := F \o J^m_N (\vh_+)
\]
on $N:=\vh_+^{-1}(M) = \{(u,w) \in U \times \R^\ell : (u,w + \vh(u)) \in M\}$.

We shall need the following result on the pullback of a definable bounded family of Whitney jets of class $C^{m,\om}$ 
along a definable family $(\vh_a)_{a \in A}$ of $\La_{m+1}$-regular maps. 
For each $a \in A$, let $\vh_{a,+}$ and $\ol \vh_{a,+}$ be defined in analogy to $\vh_+$ and $\ol \vh_+$.

\begin{proposition}[{\cite[Proposition 4.3]{Pawlucki08aa}}] \label{prop:uniformpullback}
    Let $(U_a)_{a \in A}$ be a definable family of open quasiconvex sets $U_a \subseteq \R^k$ with constant (in 
    \Cref{def:quasiconvex}) independent of $a \in A$.
    Let $(\vh_a)_{a \in A}$ be a definable family of $\La_{m+1}$-regular maps $\vh_a : U_a \to \R^\ell$ 
    with constant (in \eqref{eq:Lapreg}) independent of $a \in A$.
    Let $(M_a)_{a \in A}$ be a definable family of  
    closed quasiconvex subsets $M_a$ of $U_a \times \R^\ell$ such that 
    $(\ol M_a)_{a \in A}$ and $(\p U_a \times \R^\ell)_{a \in A}$ 
    are uniformly separated.

    If $(F_a)_{a \in A}$ is a definable bounded family of Whitney jets of class $C^{m,\om}$ on $(\ol M_a)_{a \in A}$
    which is flat on $(\p M_a)_{a \in A}$, 
    then $(G_a)_{a \in A}$ is a definable bounded family of Whitney jets of class $C^{m,\om}$ on $(N_a)_{a \in A}$,
    where
    $G_a:= \vh_{a,+}^* F_a$ and $N_a := \vh_{a,+}^{-1}(M_a)$.
    If, moreover, for each $a \in A$, $t_a : U_a \to (0,\infty)$ is a function satisfying $t_a(u) \le d(u,\p U_a)$ for every 
    $u \in U_a$ and 
    \[
        |F_a^\ka(u,w)|\lesssim  \om(t_a(u)) t_a(u)^{m-|\ka|}, \quad (u,w) \in M_a, \, |\ka| \le m,
    \]
    then, for each $a \in A$, 
    \[
        |G_a^\ka(u,w)|\lesssim \om(t_a(u)) t_a(u)^{m-|\ka|}, \quad (u,w) \in N_a, \, |\ka| \le m.
    \]
\end{proposition}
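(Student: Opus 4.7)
The plan is to adapt Paw\l ucki's single-jet argument \cite{Pawlucki08aa} to the parameterized setting and verify that every constant appearing depends only on uniform data: the Whitney bounds of $(F_a)$, the $\La_{m+1}$-regularity constant of $(\vh_a)$, the quasiconvexity constants of $(U_a)$ and $(M_a)$, and the uniform separation constant of $(\ol M_a)$ from $(\p U_a \times \R^\ell)$. Definability of $(G_a)_{a \in A}$ is automatic from the Fa\`a di Bruno expansion
\begin{equation*}
    G_a^\ka(u,w) = \sum_{|\ga|\le |\ka|} (F_a^\ga \circ \vh_{a,+})(u,w) \cdot P_{\ka,\ga}\bigl((\p^\be \vh_a(u))_{|\be|\le|\ka|}\bigr),
\end{equation*}
with universal-coefficient polynomials $P_{\ka,\ga}$, since definable families are preserved by polynomial operations.

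For the uniform pointwise bound \eqref{eq:uniformW1} I would split according to $d := d(u,\p U_a)$. On $\{d\ge 1\}$ the $\La_{m+1}$-regularity of $\vh_a$ bounds $|\p^\be \vh_a(u)|$ uniformly and the hypothesis on $F_a^\ga$ yields $|G_a^\ka| \lesssim 1$ directly. On $\{d<1\}$ the derivatives $|\p^\be \vh_a(u)|$ blow up like $d^{1-|\be|}$, but flatness of $F_a$ on $\p M_a$ combined with uniform separation of $\ol M_a$ from $\p U_a \times \R^\ell$ gives, at $p := \vh_{a,+}(u,w) \in M_a$, the weighted bound
\begin{equation*}
    |F_a^\ga(p)| \lesssim \om(d)\, d^{m-|\ga|}.
\end{equation*}
A homogeneity count in Fa\`a di Bruno -- each monomial contributes $\om(d)\,d^{m-|\ga|}\prod_j d^{1-|\be_j|} = \om(d)\, d^{m-|\ka|}$ since $\sum_j |\be_j| = |\ka|$ -- then shows $|G_a^\ka| \lesssim \om(1)$ on this region as well.

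To establish the Whitney condition \eqref{eq:uniformW2} I would use the Taylor-polynomial form from \Cref{rem:equivW2} and estimate $|T^m_{x_1} G_a(z) - T^m_{x_2} G_a(z)|$ for $x_1, x_2 \in N_a$ and $z \in \R^k \times \R^\ell$. Setting $p_i := \vh_{a,+}(x_i) \in M_a$, quasiconvexity of $U_a$ together with $\La_1$-regularity of $\vh_a$ yields $|p_1 - p_2| \approx |x_1 - x_2|$ with uniform constants. Inserting the intermediate quantity $\pi_m(T^m_{p_1} F_a \circ T^m_{x_2} \vh_{a,+})$ splits the difference into two pieces: the first is controlled by the Whitney condition of $F_a$ on the quasiconvex $\ol M_a$ applied at $p_1, p_2$ (with a uniform constant), while the second is the composition of $T^m_{p_1} F_a$ with the Taylor remainder of order $m+1$ of $\vh_{a,+}$ and therefore picks up a factor $\sup_{[x_1,x_2]} |\p^{m+1}\vh_a|$.

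The main obstacle is bounding this second piece near $\p U_a$, where $|\p^{m+1} \vh_a|$ can be as large as $d^{-m}$. Here the weighted bound for $F_a^\ga$ derived above -- itself a consequence of flatness on $\p M_a$ and uniform separation -- is precisely what absorbs this blow-up, producing a final estimate of the form $C\, \om(|x_1 - x_2|)(|z-x_1|^m + |z-x_2|^m)$ with $C$ independent of $a$. For the last assertion of the proposition, I would repeat the Fa\`a di Bruno bookkeeping of the second paragraph but substitute the hypothesis $|F_a^\ka(u,w)| \lesssim \om(t_a(u))\, t_a(u)^{m-|\ka|}$ in place of the generic bound on $F_a^\ka$; since $t_a(u) \le d(u,\p U_a)$, the same homogeneity count yields the claimed weighted bound on $G_a^\ka$ with the same modulus of continuity $\om$ and the same weight $t_a$.
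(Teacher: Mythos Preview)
Your proposal is correct and follows the same approach as the paper, which simply defers to Paw\l ucki's proof of \cite[Proposition 4.3]{Pawlucki08aa} while noting that all constants depend only on the uniform data you list. Your sketch is in fact more detailed than the paper's one-line justification; the only point worth making explicit when filling in details is the standard case split on whether $|x_1 - x_2|$ is small relative to $d(u_i,\p U_a)$, which cleanly handles the possibility that the segment $[x_1,x_2]$ leaves $U_a \times \R^\ell$.
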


\begin{proof}
    Follows from the proof of \cite[Proposition 4.3]{Pawlucki08aa}.
\end{proof}

We will be interested in the case that $M_a = \Ga(\vh_a)$, $a \in A$. 
Then $(G_a)_{a \in A}$ extends to a definable bounded family of Whitney jets of class $C^{m,\om}$ on $(\ol N_a = \ol U_a \times 0)_{a \in A}$ 
which is flat on $(\p N_a = \p U_a \times 0)_{a \in A}$.
This follows from the following lemma and Hestenes' lemma (e.g., \cite[Theorem 1.10]{Thamrongthanyalak:2017aa}); see \cite[Remark 4.2]{Pawlucki08aa}.

\begin{lemma} \label{lem:contexofjets}
    Let $(E_a)_{a \in A}$ be a family of locally closed, quasiconvex sets $E_a \subseteq \R^n$ with constant (in 
    \Cref{def:quasiconvex}) independent of $a \in A$.
    Suppose that $(F_a)_{a \in A}$ is a bounded family of Whitney jets of class $C^{m,\om}$ on $(E_a)_{a \in A}$
    such that, for all  $a \in A$ and $|\al|\le m$,
    $F_a^\al$  has a continuous extension $\ol F_a^\al$ to $\ol E_a$.
    Then $(\ol F_a)_{a \in A}$ is a bounded family of Whitney jets of class $C^{m,\om}$ on $(\ol E_a)_{a \in A}$.
\end{lemma}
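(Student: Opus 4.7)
The plan is to pass the two Whitney bounds \eqref{eq:uniformW1} and \eqref{eq:uniformW2} from the dense subset $E_a$ to its closure $\ol E_a$ by a direct continuity argument, observing that the uniform constants are preserved in the limit. First, since $E_a$ is dense in $\ol E_a$ and each $\ol F_a^\al$ is continuous, we have $\sup_{x \in \ol E_a}|\ol F_a^\al(x)| = \sup_{x \in E_a}|F_a^\al(x)|$ for every $|\al|\le m$ and every $a \in A$; hence the uniform sup-bound \eqref{eq:uniformW1} is inherited by $(\ol F_a)_{a \in A}$ with the same constant.

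For \eqref{eq:uniformW2}, I would fix $a \in A$ and distinct points $x,y \in \ol E_a$, and choose sequences $x_k, y_k \in E_a$ with $x_k \to x$ and $y_k \to y$. The remainder value $(R^m_{x_k} F_a)^\al(y_k)$ is a polynomial expression in the jet entries $F_a^\ga(x_k)$, $F_a^\ga(y_k)$ for $|\ga|\le m$ and in the coordinates of $y_k - x_k$, so by continuity of each $\ol F_a^\ga$ it converges to $(R^m_x \ol F_a)^\al(y)$. Simultaneously, $\om(|x_k - y_k|)|x_k - y_k|^{m-|\al|} \to \om(|x-y|)|x-y|^{m-|\al|}$ by continuity of $\om$ on $(0,\infty)$, using $|x-y|>0$. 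The uniform bound on $E_a$ thus passes to $\ol E_a$ with an identical constant. When $x = y$, the left-hand side vanishes by the very definition of $R^m_x F$ at $y=x$, so the bound is trivially true.

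Combining these two points shows that $(\ol F_a)_{a \in A}$ satisfies \eqref{eq:uniformW1} and \eqref{eq:uniformW2} on $(\ol E_a)_{a \in A}$ with constants independent of $a$, which is exactly the claim. The quasiconvexity hypothesis is not invoked in the argument itself; it is stated in the lemma because the result is typically used in conjunction with \Cref{rem:equivW2} and \Cref{prop:uniformpullback}, where quasiconvexity is essential. There is no genuine obstacle: the key point to notice is merely that the Whitney functionals depend continuously on both base points and on the jet values, so they extend to $\ol E_a$ without deterioration of the constants, and in particular the uniformity in $a \in A$ is preserved by the pointwise limit.
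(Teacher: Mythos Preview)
Your proof is correct and in fact simpler than the paper's. The paper argues differently: for $x,y\in\ol E_a$ it picks sequences $x_k,y_k\in E_a$, but rather than directly passing the assumed Whitney bound \eqref{eq:uniformW2} to the limit, it invokes quasiconvexity to join $x_k$ and $y_k$ by a rectifiable path $\si_k$ of length $\le C_1|x_k-y_k|$ and then applies the Tougeron-type estimate \cite[IV (2.5.1)]{Tougeron72},
\[
|(R^m_{x_k} F_a)^\al(y_k)| \le n^{\frac{m-|\al|}{2}} C_1^{m-|\al|}|x_k-y_k|^{m-|\al|}\sup_{\xi\in\si_k}\sup_{|\be|=m}|F_a^\be(\xi)-F_a^\be(x_k)|,
\]
bounding the last supremum via the $\om$-H\"older continuity of the top-order components along the path. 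This essentially re-derives the Whitney remainder bound from the top-order H\"older condition, which is redundant here since \eqref{eq:uniformW2} is already part of the hypothesis. Your direct continuity argument---observing that $(R^m_{x_k}F_a)^\al(y_k)$ is a polynomial expression in the continuously extended jet components and letting $k\to\infty$---is entirely sufficient and shows, as you note, that the quasiconvexity hypothesis is not actually used in the proof of the lemma as stated. The paper's route would become necessary if the hypothesis were weakened to just $\om$-H\"older continuity of the order-$m$ components (cf.\ the last part of \Cref{rem:equivW2}), which may explain its presence.
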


\begin{proof}
%
    Let $x,y \in \ol E_a$. There exist sequences $(x_k),(y_k) \subseteq E_a$ such that 
    $x_k \to x$ and $y_k \to y$. 
    By assumption, there exist a constant $C_1>0$, independent of $a \in A$ and of $k$,
    and a rectifiable path $\si_k$ joining $x_k$ and $y_k$ in $E_a$ such that 
    for the length of $\si_k$ we have
    \[
        \ell(\si_k) \le C_1\, |x_k - y_k|.
    \]
    Let $F=(F^\al)_{|\al|\le m}$ be a Whitney jet of class $C^{m,\om}$ on $(E_a)_{a \in A}$. 
    By \cite[IV (2.5.1)]{Tougeron72}, for all $|\al| \le m$,
    \[
        |(R^m_{x_k} F_a)^\al (y_k)| 
        \le n^{\frac{m-|\al|}{2}} C_1^{m-|\al|} |x_k-y_k|^{m-|\al|} \sup_{\xi \in \si_k} \sup_{|\be|=m} |F_a^\be(\xi)-F_a^\be(x_k)|.
    \]
    We may assume that $\si_k$ is parameterized by $t \in [0,1]$ with $\si_k(0)=x_k$ and $\si_k(1) = y_k$. 
    By \eqref{eq:uniformW2}, for $t \in [0,1]$,
    \begin{align*}
        \sup_{|\be|=m} |F^\be(\si_k(t))-F^\be(x_k)|
        &\le   C_2 \,  \om(|\si_k(t)-x_k|) \le   C_2 \,  \om(\ell(\si_k|_{[0,t]}))
        \\
        &\le   C_2 \,  \om(\ell(\si_k)) \le   C_2 \,  \om(C_1 |x_k - y_k|) \le   C_3 \,  \om(|x_k - y_k|),
    \end{align*}
    for constants $C_i>0$ independent of $a \in A$.
    Thus
    \[
        |(R^m_{x_k} F_a)^\al (y_k)| 
        \le n^{\frac{m-|\al|}{2}} C_1^{m-|\al|} |x_k-y_k|^{m-|\al|}\, C_3 \,  \om(|x_k - y_k|)
    \] 
    and letting $k \to \infty$ shows that \eqref{eq:uniformW2} is satisfied for $(\ol F_a)_{a \in A}$. 
    It is clear that
    \eqref{eq:uniformW1} is satisfied.
\end{proof}

\subsection{Cutoff}

We finish this section with a technical cutoff result which will be used in the proof of \Cref{thm:mainuniform}.

\begin{proposition}[{\cite[Proposition 3.9]{Pawlucki08aa}}] \label{prop:uniformPawlucki39}
    Let $(U_a)_{a \in A}$ be a definable family of open quasiconvex sets $U_a \subseteq \R^k$ with constant (in 
    \Cref{def:quasiconvex}) independent of $a \in A$.
    Let $(h_a)_{a \in A}$ be a definable family of $C^m$-functions $h_a : U_a \times \R^\ell \to \R$ 
    and $(\rh_a)_{a \in A}$ a definable family of $C^{m+1}$-functions $\rh_a : U_a \to \R$. 
    Let $(t_a)_{a \in A}$ be a definable family of positive Lipschitz functions 
    $t_a : U_a \to (0,\infty)$ with Lipschitz constants independent of $a \in A$ 
    such that $t_a(u) \le d(u,\p U_a)$ for all $a \in A$ and $u \in U_a$.
    For $\ep>0$, consider the definable family $(\De^\ep_{a})_{a \in A}$, where 
    \[
        \De^\ep_{a} := \{(u,w) \in U_a \times \R^\ell : |w| < \ep \, t_a(u)\}.
    \]
    Assume that, for all $a \in A$, 
    \begin{equation} \label{eq:uniformr}
        \Big|\p^\al \Big( \frac{1}{\rh_a}\Big)(u)\Big| \lesssim t_a(u)^{-|\al|-1}, \quad  u \in U_a,\, |\al|\le m+1.  
    \end{equation}
    Let $\xi : \R \to \R$ be a definable $C^m$-function with compact support, fix $1 \le i \le \ell$, and set, 
    for all $a \in A$, 
    \[
        f_a(u,w) := \xi\Big(\frac{w_i}{\rh_a(u)}\Big) h_a(u,w), \quad (u,w) \in U_a \times \R^\ell.
    \]
    If $(h_{a})_{a \in A}$ is a definable bounded family of $C^{m,\om}$-functions on $(\De^\ep_{a})_{a \in A}$ 
    such that, for each $a \in A$,
    \[
        |\p^\ga h_a(u,w)| \lesssim  \om(t_a(u)) t_a(u)^{m-|\ga|}, \quad (u,w) \in \De^\ep_{a},\, |\ga|\le m,
    \]
    then $(f_a)_{a \in A}$ is a definable bounded family of $C^{m,\om}$-functions on $(\De^\ep_{a})_{a \in A}$ 
    such that, for each $a \in A$,

    \[
        |\p^\ga f_a(u,w)| \lesssim  \om(t_a(u)) t_a(u)^{m-|\ga|}, \quad (u,w) \in \De^\ep_{a},\, |\ga|\le m.
    \]
\end{proposition}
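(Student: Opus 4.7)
Plan: The family $(f_a)_{a \in A}$ is definable by construction, so the real content is the uniform $C^{m,\om}$-bound. I would adapt the proof of Pawlucki's Proposition~3.9, tracking every constant so that it only depends on the universal data of the hypotheses: $m$, $k$, $\ell$, the constants implicit in \eqref{eq:uniformr}, the uniform quasiconvexity constant of $U_a$, the uniform Lipschitz constant of $t_a$, $\ep$, and the $C^m$-norm of $\xi$.

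Set $g_a(u,w) := \xi(w_i/\rh_a(u))$ and $\ph_a(u,w):=w_i/\rh_a(u)$, so that $f_a = g_a h_a$, and expand by Leibniz:
\[
    \p^\ga f_a = \sum_{\al+\be=\ga} \binom{\ga}{\al}(\p^\al g_a)(\p^\be h_a).
\]
The crucial subestimate is $|\p^\de \ph_a(u,w)| \lesssim t_a(u)^{-|\de|}$ for every $|\de|\ge 1$ and $(u,w)\in\De_a^\ep$: any derivative $\p^{(\et,\ze)}\ph_a$ vanishes once $|\ze|\ge 2$; for $\ze=0$ it equals $w_i \cdot \p_u^\et(1/\rh_a)(u)$; for $\ze$ a unit multi-index along $w_i$ it equals $\p_u^\et(1/\rh_a)(u)$; the bound $|w_i|<\ep\, t_a(u)$ together with \eqref{eq:uniformr} yields the claim. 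Fa\`a di Bruno and uniform boundedness of $\xi$ and its derivatives then give $|g_a|\lesssim 1$ and $|\p^\al g_a(u,w)|\lesssim t_a(u)^{-|\al|}$ for $1 \le |\al|\le m$. Inserting these and the given pointwise bound on $\p^\be h_a$ into the Leibniz expansion yields
\[
    |\p^\ga f_a(u,w)| \lesssim \om(t_a(u))\,t_a(u)^{m-|\ga|}, \quad (u,w)\in\De_a^\ep,\ |\ga|\le m,
\]
with constants independent of $a \in A$. This is the asserted pointwise bound; in particular, the sup-part of the $C^{m,\om}$-norm is uniformly controlled, since the hypothesis already forces $\om(t_a)\,t_a^{m-|\be|}$ to be bounded on $\De_a^\ep$.

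The main obstacle is the uniform $\om$-H\"older estimate on $\p^\ga f_a$ for $|\ga|=m$, needed to conclude a uniformly bounded $C^{m,\om}$-norm. I would treat this term by term in the Leibniz expansion. For $|\al|=0$, use that $g_a$ is bounded with $|\nabla g_a|\lesssim t_a^{-1}$ and that $\p^\ga h_a$ is $\om$-H\"older by the assumption $h_a \in C^{m,\om}$. For $|\al|\ge 1$, the factor $\p^\be h_a$ with $|\be|<m$ is Lipschitz with gradient bounded by $\om(t_a)\,t_a^{m-|\be|-1}$ (apply the pointwise hypothesis to $\p^{\be+(j)} h_a$), while $\p^\al g_a$ is at least $C^1$ on $\De_a^\ep$ thanks to the $C^{m+1}$-regularity of $\rh_a$, which supplies the extra derivative exactly in the extreme case $|\al|=m$, with gradient of order $t_a^{-|\al|-1}$. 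For two points of $\De_a^\ep$ at distance $r$, I would split into the regime $r \lesssim t_a$, where concavity of $\om$ yields $r\,\om(t_a)/t_a \le \om(r)$, and the regime $r \gtrsim t_a$, where monotonicity and the subadditivity implied by concavity give $\om(t_a)\lesssim\om(r)$ so the pointwise bounds suffice directly. The uniform quasiconvexity of $U_a$ (and hence of $\De_a^\ep$) is used to join arbitrary points by paths of controlled length, so that these local Lipschitz-type estimates integrate to a globally uniform $\om$-H\"older constant independent of $a$.
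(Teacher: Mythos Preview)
Your proposal is correct and follows essentially the same route as the paper: the paper's own proof simply says to repeat Paw{\l}ucki's argument (his Proposition~3.9, together with his Lemma~3.5 and Proposition~3.6), checking that every constant depends only on the uniform data, and your sketch is precisely an unpacking of that argument---Leibniz plus Fa\`a di Bruno for the pointwise bounds on $\p^\al g_a$, then the two-regime split (near/far compared to $t_a$) for the $\om$-H\"older estimate on top-order derivatives. Your outline is in fact more detailed than what the paper writes down; the only point to be slightly careful with is the passage ``and hence of $\De_a^\ep$'' for quasiconvexity, but in the near regime the straight segment already stays in a set $\De_a^{C\ep}$ where the bounds on $g_a$ persist, so this causes no difficulty.
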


\begin{proof}
    It suffices to repeat the proof of Proposition 3.9 in \cite{Pawlucki08aa} (as well as Lemma 3.5 and Proposition 3.6 
    which are used in the proof).
\end{proof}

\begin{remark} \label{rem:uniformPawlucki39}
    \Cref{prop:uniformpullback} and \Cref{prop:uniformPawlucki39} remain true if we remove everywhere the attribute ``definable''. 
\end{remark}

\section{Bounded definable extension of Whitney jets} \label{sec:proof}

This section is devoted to the proof of \Cref{thm:mainuniform}.
Let us recall the setup:

\smallskip\noindent
\emph{Let $0 \le m\le p$ be integers and $\om$ a modulus of continuity.
    Let $(E_a)_{a \in A}$ be a definable family of closed subsets of $\R^n$.
    Let $(F_a)_{a \in A}$ be a definable bounded family of Whitney jets of class $C^{m,\om}$ on $(E_a)_{a \in A}$.
    We will show that
    there exists a definable bounded family $(f_a)_{a \in A}$ of $C^{m,\om}$-extensions to $\R^n$ of $(F_a)_{a \in A}$
that is $C^p$ outside $(E_a)_{a \in A}$.}
\smallskip

For each $a \in A$, let $\supp F_a$ denote the closure of $\bigcup_{|\ka| \le m} \{x \in E_a : F^\ka_a(x) \ne 0\}$ 
and let $(E'_a)_{a \in A}$ be a definable subfamily of $(E_a)_{a \in A}$ consisting of closed subsets $E'_a$ of $E_a$ 
such that $\supp F_a \subseteq E'_a$.

Let $A' := \{a \in A : \supp F_a = \emptyset\}$. 
The family $(F_a)_{a \in A'}$ can be extended by $(0)_{a \in A'}$ to $\R^n$. 
So we may assume that, for all $a \in A$, $\supp F_a \ne \emptyset$ and thus $E'_a \ne \emptyset$.

We proceed by induction on $k := \max_{a \in A} \dim E'_a$ and show:

\begin{hypo}{($\mathbf{I}_k$)}
    Let $(E_a)_{a \in A}$ be a definable family of closed subsets $E_a$ of $\R^n$ 
    and $(F_a)_{a \in A}$ a definable bounded family of Whitney jets of class $C^{m,\om}$ on $(E_a)_{a \in A}$.
    Let $(E'_a)_{a \in A}$ be a definable subfamily of $(E_a)_{a \in A}$ of closed subsets $E'_a$ of $E_a$ such that 
    $\supp F_a \subseteq E'_a$ and $\dim E'_a \le k$, for all $a \in A$.
    Then there exists a definable bounded family $(f_a)_{a \in A}$ of $C^{m,\om}$-extensions to $\R^n$ of $(F_a)_{a \in A}$
    that is $C^p$ outside $(E'_a)_{a \in A}$.
\end{hypo}

Let us fix an integer $p\ge m+1$.
(We need that $p\ge m+1$ in the proof. The case $p=m$ in \Cref{thm:mainuniform} is evidently a trivial consequence.)

\subsection*{Overview of the proof}
Before we actually start the proof of ($\mathbf{I}_k$), 
let us give a brief general overview.
Besides the induction on the dimension $k$, we use, for fixed $k$, an induction on the 
number of the $k$-dimensional strata of $E'_a$. 
This is possible since this number is uniformly bounded independently of $a \in A$, 
thanks to \Cref{thm:uniformLapstrat}.
In this way, we can reduce the proof to the case that $E_a'$ has dimension $k$ 
and is the closure of a single stratum $S_a$ that is the graph of a $\La_p$-regular map $\vh_a$.
We can assume that the Whitney jet $F_a$ is flat on $\p S_a$; see \Cref{prop:afterred}.
This case is then checked in three gradually more general steps:
\begin{enumerate}
    \item In the first step, we assume that $\vh_a \equiv 0$ and $E_a'=E_a$. 
    \item In the second step, we still suppose that $\vh_a \equiv 0$ but allow that $E_a'$ is a proper subset of $E_a$. 
    \item The general case in the final third step is reduced to the previous steps by means of \Cref{prop:uniformpullback}.
\end{enumerate}

\subsection*{Induction basis ($\mathbf{I}_0$)}

If $k=0$, then each $E'_a$ is a finite set (but $E_a$ might be infinite) 
and there is a constant which bounds the number $|E'_a|$ of elements of 
$E'_a$ independently of $a \in A$, by uniform finiteness; see \cite[4.4]{vandenDriesMiller96}. 

Let us make induction on $s := \max_{a \in A} |E'_a|$.
The base case $s =1$ is treated in the following lemma.

\begin{lemma}
    Let $(E_a)_{a \in A}$ be a definable family of closed subsets $E_a$ of $\R^n$ and $(E'_a)_{a \in A}$
    a definable subfamily of $(E_a)_{a \in A}$ such that, for each $a \in A$, $E'_a = \{x_a\}$.
    Let $(F_a)_{a \in A}$ be a definable bounded family of Whitney jets of class $C^{m,\om}$ on $(E_a)_{a \in A}$
    such that $\supp F_a \subseteq \{x_a\}$, for all $a \in A$.
    Then there exists a definable bounded family $(f_a)_{a \in A}$ of $C^{m,\om}$-extensions to $\R^n$ of $(F_a)_{a \in A}$ 
    that is $C^p$ outside $(E'_a)_{a \in A}$.
\end{lemma}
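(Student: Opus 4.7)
The plan is to extend the single-point jet $F_a$ by cutting off its Taylor polynomial $P_a := T^m_{x_a} F_a$ on a scale determined by $r_a := d(x_a, E_a \setminus \{x_a\})$ (with the convention $d(\cdot,\emptyset) := +\infty$). Fix once and for all a semialgebraic $C^p$ bump $\chi : \R \to [0,1]$ with $\chi \equiv 1$ on $[0, 1/4]$ and $\supp \chi \subseteq [-1, 1]$, set $\tau_a := \min\{1, r_a/2\}$ (with the convention $\tau_a := 1$ when $r_a = +\infty$), and put
\[
    f_a(x) := \chi\!\Big(\frac{|x - x_a|^2}{\tau_a^2}\Big)\, P_a(x) \text{ if } r_a > 0, \qquad f_a \equiv 0 \text{ if } r_a = 0.
\]
The family $(f_a)_{a \in A}$ is manifestly definable. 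Each $f_a$ coincides with $P_a$ on $B(x_a, \tau_a/2)$, so it realizes $F_a$ at $x_a$, and is supported in $B(x_a, \tau_a)$, which is disjoint from $E_a \setminus \{x_a\}$ by the choice of $\tau_a$; hence $f_a$ is $m$-flat on $E_a \setminus \{x_a\}$. Since $\chi$ is semialgebraic $C^p$ and $P_a$ is polynomial, $f_a \in C^p(\R^n)$.

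The conceptual core of the proof is a uniform estimate on the Taylor coefficients: there exists $C > 0$, independent of $a \in A$, such that whenever $r_a < \infty$ one has
\[
    |F_a^\al(x_a)| \le C\, \om(r_a)\, r_a^{m - |\al|}, \qquad |\al| \le m.
\]
I will prove this by picking $y_a \in E_a \setminus \{x_a\}$ with $|y_a - x_a| = r_a$ (possible for $r_a \in (0,\infty)$ since $E_a$ is closed and $x_a$ is then isolated in it), applying the Whitney condition \eqref{eq:uniformW2} at the pair $(x_a, y_a)$ to bound $|\p^\al P_a(y_a)|$, and running a downward induction on $|\al|$ starting at $|\al| = m$ (where $\p^\al P_a$ is the constant $F_a^\al(x_a)$); lower-order terms in the Taylor expansion are absorbed using the bounds already obtained at higher orders. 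In the borderline case $r_a = 0$, the same argument applied to any sequence $y_n \to x_a$ in $E_a \setminus \{x_a\}$ forces $P_a \equiv 0$, justifying the choice $f_a \equiv 0$; when $r_a = +\infty$, we rely instead on the raw bound \eqref{eq:uniformW1}.

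Given these estimates, the uniform $C^{m,\om}$-bound on $f_a$ follows by routine computation. For $r_a \le 2$ (so $\tau_a = r_a/2$), the coefficient estimate yields $|\p^\de P_a(x)| \lesssim \om(r_a)\, r_a^{m - |\de|}$ on $B(x_a, \tau_a)$, and the chain rule gives $|\p^\be(\chi(|x-x_a|^2/\tau_a^2))| \lesssim \tau_a^{-|\be|}$; Leibniz then delivers $|\p^\ga f_a(x)| \lesssim \om(r_a)\, r_a^{m - |\ga|} \lesssim 1$ for $|\ga| \le m$. For $r_a > 2$ (so $\tau_a = 1$), the same computation using only \eqref{eq:uniformW1} gives $|\p^\ga f_a| \lesssim 1$. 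The Hölder condition for $|\ga| = m$ is verified by a case split on $|x - y|$ against the relevant scale: when $|x - y|$ is within the scale, the mean-value inequality applied to $\p^\ga f_a$ combined with the monotonicity of $t \mapsto t/\om(t)$ (for $r_a \le 2$) or with the concavity bound $\om(t) \ge \om(1)\, t$ for $t \le 1$ (for $r_a > 2$) closes the estimate; when $|x - y|$ exceeds the scale, at least one of the two points lies outside $\supp f_a$ and the pointwise bound on $\p^\ga f_a$ together with the monotonicity of $\om$ suffices. The main obstacle is really just the conceptual coefficient estimate, which is where the hypothesis $\supp F_a \subseteq \{x_a\}$ does its actual work; once that is in hand, the rest is bookkeeping.
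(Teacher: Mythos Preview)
Your proof is correct and follows essentially the same route as the paper: cut off the Taylor polynomial $T^m_{x_a}F_a$ by a definable $C^p$ bump at the scale $\min\{1, d(x_a, E_a\setminus\{x_a\})\}$, then verify the uniform $C^{m,\om}$-bounds. Two minor points where the paper is cleaner. First, your downward induction for the coefficient estimate is unnecessary: since $F_a$ is flat at $y_a$, one has $(R^m_{y_a}F_a)^\al(x_a) = F_a^\al(x_a)$ directly, so \eqref{eq:uniformW2} with the roles of $x$ and $y$ reversed gives $|F_a^\al(x_a)| \le C\,\om(r_a)\,r_a^{m-|\al|}$ in one line. Second, your H\"older case split has a small slip---``$|x-y|$ exceeds the scale'' does not by itself force one point outside $\supp f_a$ when the scale is $\tau_a$ (the support has diameter $2\tau_a$); the paper instead splits by the location of $x,y$ relative to $\overline{B(x_a,\tau_a)}$ and handles the mixed case via the boundary intersection point, which avoids this issue.
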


\begin{proof}
    Note that, for each $a \in A$, $x_a$ is an isolated point of $E_a$, by continuity of $F_a$.

    Let $\ch : \R^n \to \R$ be a definable $C^p$-function
    that equals $1$ in a neighborhood of $0$ and
    has support contained in the unit ball.
    For each $a \in A$, set
    \[
        d_a := 
        \begin{cases}
            \min\{1,d(x_a, E_a \setminus \{x_a\})\} & \text{if } E_a \setminus \{x_a\} \ne \emptyset,
            \\
            1 & \text{otherwise.}
        \end{cases}
    \]
    Define, for each $a \in A$, 
    \[
        f_a(x) := \ch\Big(\frac{x-x_a}{d_a}\Big) \cdot T^m_{x_a} F_a(x), \quad x \in \R^n. 
    \]
    Then $(f_a)_{a \in A}$ is a definable family of
    $C^p$-functions $f_a : \R^n \to \R$ such that each $f_a$ has support contained in the ball $B_a:= B(x_a,d_a)$ with radius $d_a$ around $x_a$
    and extends the jet $F_a$.
    We will prove that the family $(f_a)_{a \in A}$ is bounded in $C^{m,\om}(\R^n)$.

    Let $\ga \in \N^n$, where $|\ga|\le m$. Then
    \begin{align*}
        \p^\ga f_a(x) = \sum_{\al +\be = \ga} \binom{\ga}{\al} d_a^{-|\al|}\, \p^\al \ch\Big(\frac{x-x_a}{d_a}\Big) \p^\be T^m_{x_a} F_a(x).
    \end{align*}
    By \eqref{eq:uniformW2} (for $y \in E_a \setminus \{x_a\}$ with $d_a = |x_a -y|$ if $d_a < 1$) and \eqref{eq:uniformW1} (if $d_a = 1$),
    \begin{equation*}
        |F_a^\be(x_a)| \le C \, \om(d_a) d_a^{m - |\be|}, \quad |\be| \le m,
    \end{equation*}
    for a constant $C>0$ independent of $a \in A$.
    For the rest of the proof, $C$ will denote a constant independent of $a \in A$; its actual value may change.
    Thus, for $x \in B_a$,
    \begin{align*}
        |\p^\be T^m_{x_a} F_a(x)| &= \Big|\sum_{|\ka| \le m-|\be|} \frac{1}{\ka!} F^{\ka+\be}_a(x_a) (x-x_a)^{\ka}\Big|
                                  \le C \, \om(d_a) d_a^{m -  |\be|}, \quad |\be| \le m.
    \end{align*}
    It follows that, for all $x \in \R^n$,
    \begin{align} \label{eq:omub}
        |\p^\ga f_a(x)| \le C \, \om(d_a) d_a^{m -  |\ga|} \le C\,\om(1), \quad |\ga| \le m.
    \end{align}

    Now assume that $|\ga|=m$. To see that $|\p^\ga f_a|_{C^{0,\om}(\R^n)}$ is bounded by a constant independent of $a \in A$, 
    it suffices to estimate, for $\al + \be =\ga$, 
    \begin{align*}
        D(x,y):= \Big|d_a^{-|\al|}\, \p^\al \ch\Big(\frac{x-x_a}{d_a}\Big) \p^\be T^m_{x_a} F_a(x) -
        d_a^{-|\al|}\, \p^\al \ch\Big(\frac{y-x_a}{d_a}\Big) \p^\be T^m_{x_a} F_a(y)\Big|.
    \end{align*}

    Let us first assume that $x,y \in \ol B_a$. Then 
    \begin{align*}
        \MoveEqLeft    d_a^{-|\al|}\, \Big| \p^\al \ch\Big(\frac{x-x_a}{d_a}\Big) -
        \p^\al \ch\Big(\frac{y-x_a}{d_a}\Big) \Big| |\p^\be T^m_{x_a} F_a(x)|
        \\
    &\le C\, \frac{\om(d_a)}{d_a} |x-y| \le 2C\, \om(|x-y|),
    \end{align*}
    since $\om$ is concave and $|x-y| \le 2 d_a$. 
    On the other hand,
    \begin{align*}
        \MoveEqLeft    d_a^{-|\al|}\, \Big|\p^\al \ch\Big(\frac{y-x_a}{d_a}\Big) \Big| |\p^\be T^m_{x_a} F_a(x)- \p^\be T^m_{x_a} F_a(y)|
        \\
        &\le C\, d_a^{-|\al|}\, \sum_{|\ka| \le m-|\be|} \frac{1}{\ka!} |F^{\ka+\be}_a(x_a)| |(x-x_a)^{\ka} - (y-x_a)^\ka|
        \\
        &\le C'\, \frac{\om(d_a)}{d_a}\, |x-y| \le 2C'\, \om(|x-y|).  
    \end{align*}
    So $D(x,y) \le C\, \om(|x-y|)$ for a constant $C>0$ independent of $a \in A$.

    If $x$ and $y$ lie outside of $B_a$, then $D(x,y)=0$.
    If $x \in B_a$ and $y \not\in \ol B_a$ and $z$ is the point, where the line segment $[x,y]$ meets $\p B_a$, then 
    \[
        D(x,y) = D(x,z) \le C\, \om(|x-z|) \le C\, \om(|x-y|).
    \]
    This ends the proof.
\end{proof}

Assume that $s >1$. 
For each $a \in A$, choose a numbering of the elements of $E'_a = \{x_{a,1},\ldots,x_{a,s_a}\}$, where $s_a \le s$.
By the induction hypothesis, 
$(F_a|_{E_a \setminus \{x_{a,2},\ldots,x_{a,s_a}\}})_{a\in A}$ 
admits a definable bounded family $(f^1_a)_{a \in A}$ of $C^{m,\om}$-extensions to $\R^n$ 
that is $C^p$ outside $(\{x_{a,1}\})_{a \in A}$.
Then $(F_a - J^m_{E_a}(f^1_a))_{a \in A}$ is a definable bounded family of 
Whitney jets of class $C^{m,\om}$ on $(E_a)_{a \in A}$ which is flat on $(E_a \setminus \{x_{a,2},\ldots,x_{a,s_a}\})_{a \in A}$
and has a definable bounded family $(f^2_a)_{a \in A}$ of $C^{m,\om}$-extensions to $\R^n$
that is $C^p$ outside $(\{x_{a,2},\ldots,x_{a,s_a}\})_{a \in A}$, again by the induction hypothesis.
Thus, $(f^1_a + f^2_a)_{a \in A}$ is the desired definable bounded family of $C^{m,\om}$-extensions to $\R^n$ of 
$(F_a)_{a \in A}$.

This ends the induction on $s$ and the base case ($\mathbf{I}_0$) of the induction on $k$.

\subsection*{Setup for the induction step}

Let $k>0$ and suppose that ($\mathbf{I}_{k-1}$) holds. We will prove ($\mathbf{I}_k$). 
This will be accomplished by showing \Cref{prop:afterred} below, but first we make 
a few preparatory reductions. 


By \Cref{thm:uniformLapstrat}, 
there is a uniform $\La_p$-stratification $(\sS_a)_{a \in A}$ of $(E_a)_{a \in A}$ 
compatible with $(E'_a)_{a \in A}$
such that, for each $a \in A$ and each $|\ka| \le m$, $F^\ka_a$ is of class $C^p$ on the strata in $\sS_a$.

By ($\mathbf{I}_{k-1}$), we may assume that $\dim E'_a = k$ for all $a \in A$ and
there is a definable bounded $C^{m,\om}$-extension $(f^0_a)_{a \in A}$ to $\R^n$ 
of the restriction of $(F_a)_{a \in A}$ 
to $(E_a \setminus P_a)_{a \in A}$, where 
\[
    P_a = \bigcup \{S_a \in \sS_a : S_a \subseteq E'_a, \, \dim S_a = k\}.
\]
Replacing $F_a$ by $F_a - J^m_{E_a}(f^0_a)$, for each $a \in A$, we may assume that 
$F_a$ is flat on all strata $S_a \in \sS_a$, $S_a \subseteq E'_a$, with $\dim S_a< k$ 
and also on $E_a \setminus E'_a$.

Let us now see that we may furthermore reduce to the case that, for each $a\in A$, 
$E'_a$ is the closure of just one $k$-dimensional stratum $S_a$
and that $F_a$ is flat on its frontier.
Indeed, the number $s_a$ of $k$-dimensional strata of $E'_a$ is uniformly bounded by a constant not depending on $a \in A$.
We may use induction on $s := \max_{a \in A} s_a$ of which the above statement is the base case that 
we take for granted for the moment. 
The induction step works just as for finite sets $E'_a$:
for each $a \in A$, let $S_{a,1},\ldots,S_{a,s_a}$ be a numbering of the $k$-dimensional strata of $E'_a$.
By the induction hypothesis, 
$(F_a|_{E_a \setminus R_a})_{a \in A}$, where $R_a := \bigcup_{i\ge 2} S_{a,i}$, 
admits a definable bounded family $(f^1_a)_{a \in A}$ 
of $C^{m,\om}$-extensions to $\R^n$ that is $C^p$ outside $(E'_a \setminus R_a)_{a \in A}$. 
Then $(F_a - J^m_{E_a}(f^1_a))_{a \in A}$ is a definable bounded family of 
Whitney jets of class $C^{m,\om}$  on $(E_a)_{a \in A}$ 
which is flat on $(E_a \setminus R_a)_{a \in A}$  
and has a definable bounded family $(f^2_a)_{a \in A}$ of $C^{m,\om}$-extensions to $\R^n$ 
that is $C^p$ outside $(R_a)_{a \in A}$, 
again by the induction hypothesis.
Thus, $(f^1_a + f^2_a)_{a \in A}$ is the desired definable bounded family of $C^{m,\om}$-extensions to $\R^n$ of 
$(F_a)_{a \in A}$.

In the case that $k = n$, $S_a$ is open in $\R^n$ and
extending $F_a$ by $0$ outside $S_a$, for all $a \in A$, yields a definable bounded family $(F_a)_{a \in A}$ of Whitney jets of class $C^{m,\om}$ 
on $(\R^n)_{a \in A}$ 
so that 
$(F^0_a)_{a \in A}$ is the desired family of $C^{m,\om}$-extensions. 
This follows
from Hestenes' lemma (e.g., \cite[Theorem 1.10]{Thamrongthanyalak:2017aa});
indeed, if $x \in S_a$ and $y \not\in \ol S_a$ and $z$ is the point, where the line segment $[x,y]$ meets $\p S_a$, then, 
by \eqref{eq:uniformW2} and \Cref{rem:equivW2}, 
for any $u \in \R^n$,
\begin{align*}
    |T^m_x F_a(u) - T^m_y F_a(u)| &= |T^m_x F_a(u)|= |T^m_x F_a(u) - T^m_z F_a(u)|
    \\ 
                                  &\le C\, \om(|x-z|)(|u-x|^m + |u-z|^m)
                                  \\ 
                                  &\le 2C\, \om(|x-y|)(|u-x|^m + |u-y|^m),
\end{align*}
for a constant $C>0$ independent of $a \in A$,
since $|u-z| \le \max\{|u-x|,|u-y|\}$.

Consequently, we may assume that $\ell := n -k >0$.

We reduced the proof to showing the following.
(We may assume that $S_a$ is a $\La_p$-cell in a fixed orthogonal system of coordinates of $\R^n$, 
which is independent of $a \in A$,
thanks to \Cref{thm:uniformLapstrat}.)

\begin{proposition} \label{prop:afterred}
    Let $(E_a)_{a \in A}$ be a definable family of closed sets $E_a$ in $\R^n$.
    Let $(E'_a)_{a \in A}$ be a definable subfamily of $(E_a)_{a \in A}$
    of
    closed subsets $E'_a$ of $E_a$ with $\dim E'_a =k$ 
    such that $E'_a = \ol S_a$, where 
    \[
        S_a = \{(u,\vh_a(u)) \in \R^k \times \R^\ell : u \in T_a\} = \Ga(\vh_a),
    \]
    and $(\vh_a)_{a \in A}$ is a definable family of $\La_p$-regular maps 
    $\vh_a : T_a \to \R^\ell$, 
    $T_a$ an open $\La_p$-cell in $\R^k$, 
    and all constants in the definition of $T_a$ and $\vh_a$ are independent of $a \in A$.
    Then any definable bounded family $(F_a)_{a \in A}$ of Whitney jets of class $C^{m,\om}$ on 
    $(E_a)_{a \in A}$ such that, for all $a \in A$, $\supp F_a \subseteq E'_a$, $F_a$ is flat on $\p S_a$, 
    and $F_a^\ka$, $|\ka|\le m$, is $C^p$ on $S_a$, 
    admits a definable bounded family $(f_a)_{a \in A}$ of $C^{m,\om}$-extensions to $\R^n$ 
    that is $C^p$ outside $(E'_a)_{a \in A}$.
\end{proposition}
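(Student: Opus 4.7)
I will follow the three-step reduction sketched in the overview, writing $\vh_a : T_a \to \R^\ell$ for the $\La_p$-regular defining map so that $\ol{S_a} = \ol{\Ga(\vh_a)}$ and, by the preceding reductions, $F_a$ is flat on $\p S_a$ and on $E_a \setminus \ol{S_a}$. The construction will be the formal Taylor polynomial of $F_a$ in the directions transverse to $\ol{S_a}$, multiplied by a cutoff that localizes to a thin tube around $\ol{S_a}$ avoiding $E_a \setminus \ol{S_a}$ and vanishing to order $m$ on $\p S_a$; the general case is reduced to the case $\vh_a \equiv 0$ by pullback.

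\emph{Step 1 (flat model, $E_a = \ol{T_a} \times 0$).} Define
\[
\tilde f_a(u,w) := \sum_{|\be| \le m} \frac{w^\be}{\be!}\, F_a^{(0,\be)}(u,0), \quad (u,w) \in T_a \times \R^\ell.
\]
Flatness of $F_a$ on $\p T_a \times 0$ together with \eqref{eq:uniformW2} applied at the closest boundary point yields
\[
|F_a^{(0,\be)}(u,0)| \lesssim \om(t_a(u))\, t_a(u)^{m-|\be|}, \quad t_a(u) := \min\{1, d(u,\p T_a)\},
\]
uniformly in $a$, and the same bound then propagates to all partial derivatives of $\tilde f_a$ on a thin tube $\{|w| \le \ep\, t_a(u)\}$. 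Now apply the cutoff Proposition~\ref{prop:uniformPawlucki39} once per transverse coordinate $w_i$, using as $\rh_a$ one of the associated functions $\rh_{a,j}$ of the cell $T_a$ from Section~\ref{ssec:associated}; its hypothesis \eqref{eq:uniformr} is precisely \eqref{eq:rt0}. The result is a definable bounded family $f_a$ in $C^{m,\om}(T_a \times \R^\ell)$ satisfying the same pointwise bound and decaying to order $m$ as $u \to \p T_a$, so extension by zero to $\R^n$ lies in $C^{m,\om}(\R^n)$ by Hestenes-type gluing.

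\emph{Steps 2 and 3.} If $E_a \setminus \ol{S_a} \ne \emptyset$, then since $F_a$ is flat there we require the cutoff tube to avoid $E_a \setminus \ol{S_a}$. Replace $t_a$ by a smaller definable Lipschitz function bounding also $d((u,0), E_a \setminus \ol{S_a})$; refining $T_a$ via \Cref{thm:uniformLapstrat} makes this smaller function comparable, uniformly in $a$, to an associated function of the refined cell, and Step~1 proceeds unchanged. For general $\vh_a$, form the pullback $G_a := \vh_{a,+}^* F_a$: by Proposition~\ref{prop:uniformpullback} together with Lemma~\ref{lem:contexofjets}, $G_a$ is a definable bounded family of Whitney jets of class $C^{m,\om}$ on $\ol{T_a} \times 0$, flat on $\p T_a \times 0$. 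Step~2 applied to $G_a$ (with the pulled-back family $\vh_{a,+}^{-1}(E_a \cap (T_a \times \R^\ell))$ playing the role of the ambient closed set) produces a definable bounded family $g_a$ of $C^{m,\om}$-extensions, supported in $T_a \times \R^\ell$ and $m$-flat on $\p T_a \times \R^\ell$. Set $f_a := g_a \circ \vh_{a,+}^{-1}$ on $T_a \times \R^\ell$ and extend by zero; since $\vh_{a,+}^{-1}(u,w) = (u, w - \vh_a(u))$ has $\La_p$-regular components with uniform constants, composition preserves $C^{m,\om}$-norms uniformly, and the pushforward-pullback identity ensures $J^m_{E_a}(f_a) = F_a$.

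\emph{Main obstacle.} The crux is maintaining the uniform pointwise bound $|\p^\ga f_a(u,w)| \lesssim \om(t_a(u))\, t_a(u)^{m-|\ga|}$ throughout all three steps, since this bound is simultaneously what Proposition~\ref{prop:uniformPawlucki39} preserves under cutoff and what Proposition~\ref{prop:uniformpullback} requires in order to pull back a jet. Its validity for the initial Taylor polynomial rests on the flatness of $F_a$ on $\p S_a$ combined with \eqref{eq:uniformW2}; arranging in Step~2 that the cutoff support avoids $E_a \setminus \ol{S_a}$ without spoiling this bound is where the uniform $\La_p$-stratification (\Cref{thm:uniformLapstrat}) becomes indispensable.
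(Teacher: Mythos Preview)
There is a genuine gap in Step~1. The claim that the bound
\[
|F_a^{(0,\be)}(u,0)| \lesssim \om(t_a(u))\, t_a(u)^{m-|\be|}
\]
``propagates to all partial derivatives of $\tilde f_a$'' is unjustified and in fact false on all of $T_a$. Computing $\p^{(\si,\ta)}\tilde f_a$ for $|\si|+|\ta|\le m$ produces terms $\p_u^\si F_a^{(0,\be)}(u,0)\,w^{\be-\ta}$ with $\ta\le\be$. When $|\si|+|\be|\le m$ you may identify $\p_u^\si F_a^{(0,\be)}=F_a^{(\si,\be)}$ and use flatness. But when $|\be|>|\ta|$ you are forced into the range $|\si|+|\be|>m$, and then $\p_u^\si F_a^{(0,\be)}$ is a genuine higher derivative of a top-order jet component for which the Whitney condition gives you nothing. (Already for $m=1$, $k=\ell=1$: $\p_u\tilde f_a = F_a^{(1,0)}(u,0) + w\,\p_u F_a^{(0,1)}(u,0)$, and the second summand is uncontrolled.) The paper closes this gap by invoking Gromov's inequality via \Cref{cor:try} and \Cref{prop:uniformGromov4}: one passes to a further uniform $\La_p$-stratification $(\sD_a)_{a\in A}$ of $(\ol T_a)_{a\in A}$ on whose open strata $D_a$ the required bounds \eqref{eq:uniformGromovsup} and \eqref{eq:uniformGromov5} hold. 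Those estimates are precisely what drives Lemmas \ref{lem:uniformStep1.2} and \ref{lem:uniformStep1.1}, and without them the cutoff proposition \ref{prop:uniformPawlucki39} cannot even be invoked, since its hypothesis on $h_a$ fails.

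Two further points. First, the substratification just described creates a lower-dimensional set $Z_a\subseteq\ol T_a$, and the paper must appeal to the induction hypothesis $(\mathbf{I}_{k-1})$ \emph{inside} Step~1 (not only in the preliminary reductions) to subtract an extension of $F_a|_{Z_a\times 0}$, so that the pieces on the separate $D_a$'s glue by zero-extension. Your sketch omits this. Second, your Step~2 reduction (``refining $T_a$ makes the smaller function comparable to an associated function of the refined cell'') is too optimistic: the distance $r_a$ to $E_a\setminus S_a$ need not be $\La_p$-regular. The paper instead uses \Cref{prop:forr} to split into substrata on which either $|\p_j r_a|\ge 1$ for some $j$ (then $r_a\ge d(\cdot,\p Q_a)$ and no extra cutoff is needed) or $|\p_j r_a|<1$ for all $j$ (then \Cref{cor:La1top2} makes $1/r_a$ satisfy \eqref{eq:uniformr} and the $r_a$-based cutoff goes through). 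Step~3 is essentially as you describe.
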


The proposition is proved in three gradually more general steps:
\begin{description}
    \item[Step 1] $\vh_a \equiv 0$ and $E'_a = E_a$ for all $a \in A$. 
    \item[Step 2] $\vh_a \equiv 0$ for all $a \in A$.
    \item[Step 3] The general case.
\end{description}

\subsection*{Step 1}

For all $a \in A$, $E_a = E'_a = \ol T_a \times 0$, where $T_a \subseteq \R^k$ is an open $\La_p$-cell with constant $C$ 
independent of $a \in A$.
We will prove \Cref{prop:afterred} in this special case 
with the additional property that 
$(f_a)_{a \in A}$ is $m$-flat outside $(\De(T_a \times 0))_{a \in A}$, where
\begin{equation} \label{eq:De}
    \De(T_a \times 0) :=  \{(u,w) \in T_a \times \R^\ell : |w| < \min\{1,d(u, \p T_a)\}\}.
\end{equation}

For each $a \in A$, we write 
\[
    F_a=(F_a^{(\al,\be)})_{(\al,\be) \in \N^k \times \N^{\ell},|\al|+|\be|\le m}.
\]
Fix $\be \in \N^{\ell}$ with $|\be| \le m$.
Let $F_{a,\be}$ be the $m$-jet which results from $F_a$ by setting all $F_a^{(\al,\be')}$ equal to $0$ whenever $\be' \ne \be$. 
Then $(F_{a,\be})_{a \in A}$ is a definable bounded family of Whitney jets of class $C^{m,\om}$ on $(\ol T_a \times 0)_{a \in A}$. 
Indeed, for each $a \in A$, the definable Whitney jet $F_a$ of class $C^{m,\om}$ on $\ol T_a \times 0$
can be identified with a collection $\tilde F_{a,\be}$, $|\be|\le m$,  
where $\tilde F_{a,\be}$ is a definable $C^{m-|\be|,\om}$-function on $\ol T_a$ such that 
$\p^\al \tilde F_{a,\be}(u) = F_a^{(\al,\be)}(u,0)$ for all $u \in \ol T_a$ and $\al \in \N^k$, $|\al| \le m-|\be|$;
see \cite[Remark 5]{Kurdyka:2014aa} and \cite[pp. 87-88]{Glaeser:1958aa}.
It suffices to prove that, for each $\be$, $(F_{a,\be})_{a \in A}$ admits a definable bounded family of $C^{m,\om}$-extensions to $\R^n$ 
that is $m$-flat outside $(\De(T_a \times 0))_{a \in A}$ and
$C^p$ outside $(\ol T_a \times 0)_{a \in A}$. 
Thus, we may suppose that, for each $a \in A$,  $F_a^{(\al,\be')} =0$ whenever $\be' \ne \be$. 
By assumption, $F_a^{(\al,\be)}$ is $C^p$ on $T_a \times 0$.

By \Cref{thm:uniformLapstrat}, \Cref{cor:try}, and \Cref{prop:uniformGromov4}, 
there is a uniform $\La_p$-stratification $(\sD_a)_{a \in A}$ of $(\ol T_a)_{a \in A}$ 
such that, for all $a \in A$, each open $k$-dimensional $D_a \in \sD_a$, and all $\al,\be$, $F^{(\al,\be)}_a$ is $C^p$ on $D_a \times 0$, and, 
for all $u \in D_a$ and $\ga\in \N^k$ with $|\ga| \le p$, 
we have 
\begin{equation} \label{eq:uniformGromovsup}
    |\p^\ga F_a^{(\al,\be)}(u,0)| \le L\, \frac{\sup\{|F_a^{(\al,\be)}(v,0)| : v \in D_a,\, |v-u|< d(u,\p D_a)\}}{d(u,\p D_{a})^{|\ga|}},
\end{equation}
and, if $1 \le |\ga| \le p$, 
\begin{equation} \label{eq:uniformGromov5}
    |\p^\ga F_a^{(\al,\be)}(u,0)| \le L\, \frac{\om(d(u,\p D_{a}))}{d(u,\p D_{a})^{|\ga|}},
\end{equation}
where $L>0$ is a constant independent of $a \in A$. 

For each $a \in A$, let $Z_a := \bigcup \{D_a  \in \sD_a : \dim D_a < k\}$.
Setting
\[
    G_a(x) := 
    \begin{cases}
        F_a(x) & \text{if } x \in Z_a \times 0,
        \\
        0 & \text{if } x \in \R^n \setminus \De(T_a \times 0),
    \end{cases}
\]
defines a definable bounded family $(G_a)_{a \in A}$ of Whitney jets of class $C^{m,\om}$
on $((Z_a \times 0) \cup (\R^n \setminus \De(T_a \times 0)))_{a \in A}$. 
This follows from Hestenes' lemma (e.g., \cite[Theorem 1.10]{Thamrongthanyalak:2017aa}) and the following reasoning.
Clearly, $(G_a)_{a \in A}$ satisfies \eqref{eq:uniformW1}.
To see \eqref{eq:uniformW2} it suffices to consider the case that $x \in Z_a \times 0$ and $y \in \R^n \setminus \De(T_a \times 0)$
and to show that 
\begin{equation} \label{eq:zeroext}
    |F_a^\ka(x)| \le C\, \om(|x-y|) |x-y|^{m-|\ka|}, \quad |\ka|\le m,
\end{equation}
for a constant $C>0$ independent of $a \in A$.
We have 
\begin{equation*}
    |F_a^\ka(x)| \le C\, \om(d(u, \p T_a)) d(u, \p T_a)^{m-|\ka|}, \quad |\ka|\le m,
\end{equation*}
by \eqref{eq:uniformW2}, since $(F_a)_{a \in A}$ is flat on $(\p T_a \times 0)_{a  \in A}$,
and
\begin{equation} \label{eq:divom1}
    |F_a^\ka(x)| \le C = \frac{C}{\om(1)} \cdot \om(1) 1^{m -|\ka|}, \quad |\ka|\le m,
\end{equation}
by \eqref{eq:uniformW1}.
Then \eqref{eq:zeroext} follows, since
we have $|x-y| \ge c\,\min\{1,d(u, \p T_a)\}$ for a universal constant $c>0$, by \eqref{eq:De}.

By ($\mathbf{I}_{k-1}$), 
there exists a definable bounded family $(g_a)_{a \in A}$ of $C^{m,\om}$-extensions 
to $\R^n$ of $(G_a)_{a \in A}$ that is $C^p$ outside 
$(Z_a \times 0)_{a \in A}$.  
So, instead of $(F_a)_{a \in A}$, 
it is enough to consider $(F_a - J^m_{E_a}(g_a))_{a \in A}$. 

If $D_{a}$ and $D'_{a}$ are distinct open $k$-dimensional strata in $\sD_a$, 
then $\De(D_a \times 0) \subseteq \De(T_a \times 0)$ and $\ol{\De(D_a \times 0)} \cap \ol{\De(D'_a \times 0)} \subseteq Z_a \times 0$.
Thus it suffices to find, separately for each $(D_a)_{a \in A}$, 
a definable bounded family $(f_a)_{a \in A}$ of $C^{m,\om}$-extensions to $\R^n$ 
of $((F_a -J^m_{E_a}(g_a))|_{\ol D_a \times 0})_{a \in A}$ 
that is $m$-flat 
outside $(\De(D_a \times 0))_{a \in A}$ and 
$C^p$ outside $(\ol D_a \times 0)_{a \in A}$.

For each $a \in A$, set 
\begin{equation*}
    h_a(u,w) := \frac{1}{\be!} F_a^{(0,\be)}(u,0) w^\be - g_{a}(u,w),
\end{equation*}
and define $f_a : \R^n \to \R$ by
\begin{equation*}
    f_a(u,w) := 
    \begin{cases}
        r_a(u,w) h_a(u,w) & \text{ if } u \in D_{a},
        \\
        0 & \text{ otherwise, } 
    \end{cases}
\end{equation*}
where 
\begin{equation} \label{eq:cutoff} 
    r_a(u,w) := \prod_{i=1}^{\ell} \prod_{j=0}^{2k} \xi\Big(C \, \sqrt{\ell} \frac{w_i}{\rh_{a,j}(u)}\Big)
\end{equation}
with $\xi : \R \to \R$ a semialgebraic $C^p$-function which is $1$ near $0$ and vanishes outside 
$(-1,1)$,
$\rh_{a,0},\rh_{a,1},\ldots,\rh_{a,2k}$ the functions associated with the open $\La_p$-cell $D_{a}$ (see \Cref{ssec:associated}), 
and $C$ is the constant 
from \eqref{eq:K0} which may be taken independent from $a \in A$, 
since it is determined by the constants in the definition of the $\La_p$-cells $D_a$, $a \in A$; see \Cref{rem:Krt}.
Note that the $m$-jet of $h_a$ at $(u,0)$ coincides with $(F_a -J^m_{E_a}(g_a))(u,0)$ 
for all $u \in D_a$.

By construction, $(f_a)_{a \in A}$ is a definable family.
We will see that it is a bounded family of $C^{m,\om}$-extensions to $\R^n$ of $((F_a -J^m_{E_a}(g_a))|_{\ol D_a \times 0})_{a \in A}$. 
It is
$m$-flat outside $(\De(D_{a} \times 0))_{a \in A}$, thanks to the properties of $r_a$,  
and it is $C^p$ outside $(\ol D_{a} \times 0)_{a \in A}$. 
Indeed, if $(u,w) \in (D_a \times \R^\ell) \setminus \De(D_{a} \times 0)$, then, by \eqref{eq:K0}, 
\begin{align*}
    \sqrt{\ell} \max_{1 \le i \le \ell} |w_i|\ge |w| \ge \min\{1,d(u,\p D_a\} \ge \frac{1}{C} \min_{0 \le j\le 2k} \rh_{a,j}(u)
\end{align*}
so that $r_a$ is identically zero on $(D_a \times \R^\ell) \setminus \De(D_{a} \times 0)$. 
It remains to check that the family $(f_a)_{a \in A}$ is contained and bounded in $C^{m,\om}(\R^n)$.
To this end, we need two lemmas.

\begin{lemma} \label{lem:uniformStep1.2}
    For each $a \in A$, $h_a$ is of class $C^{m,\om}$ on $\De(D_a \times 0)$ and 
    the $C^{m,\om}$-norm of $h_a$ on $\De(D_{a} \times 0)$ is bounded by a constant 
    independent of $a \in A$. 
\end{lemma}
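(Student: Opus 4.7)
The plan is to split $h_a = P_a - g_a$, where $P_a(u,w) := \frac{1}{\be!} F_a^{(0,\be)}(u,0)\, w^\be$, and treat the two summands separately. The summand $g_a$ is immediate: by the inductive hypothesis $(\mathbf{I}_{k-1})$ already invoked above, $(g_a)_{a\in A}$ is a definable bounded family of $C^{m,\om}$-functions on $\R^n$, hence uniformly bounded in $C^{m,\om}(\De(D_a\times 0))$. All the work concerns $P_a$.

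Set $\ph_a(u) := F_a^{(0,\be)}(u,0)$. By the identification of the Whitney jet $F_a$ with its ``normal components'' recalled earlier in Step~1 (cf.\ \cite[Remark 5]{Kurdyka:2014aa}), $(\ph_a)_{a\in A}$ is a uniformly bounded family of $C^{m-|\be|,\om}$-functions on $\ol T_a$ satisfying $\p^\al \ph_a(u) = F_a^{(\al,\be)}(u,0)$ for $|\al|\le m-|\be|$. For higher orders $m-|\be|<|\al|\le p$, I apply \Cref{prop:uniformGromov4} to $\p^{m-|\be|}\ph_a$ on the open $\La_p$-cell $D_a$ to obtain
\[
    |\p^\al \ph_a(u)| \lesssim \om(d(u,\p D_a))\, d(u,\p D_a)^{-(|\al|-(m-|\be|))},
\]
with constants independent of $a$ thanks to the uniform $\La_p$-cell constants from \Cref{thm:uniformLapstrat}. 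The Leibniz rule gives $\p^{(\al,\de)}P_a(u,w) = \frac{1}{(\be-\de)!}\p^\al \ph_a(u)\, w^{\be-\de}$ for $\de\le\be$ and zero otherwise. On $\De(D_a\times 0)$ one has $|w|\le t_a(u) := \min\{1,d(u,\p D_a)\}$; splitting on whether $|\al|\le m-|\be|$ (use the uniform $C^{m-|\be|,\om}$-bound directly) or $|\al|>m-|\be|$ (use the Gromov bound), and using that the constraint $|\al|+|\de|\le m$ forces $|\al|-(m-|\be|)\le |\be-\de|$ in the second case, each singular factor $d(u,\p D_a)^{-s}$ is absorbed by a matching $(|w|/d(u,\p D_a))^s\le 1$. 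This gives uniform $L^\infty$-bounds for all derivatives of $P_a$ of order $\le m$; in the high-$|\al|$ case more precisely $|\p^{(\al,\de)}P_a(u,w)|\lesssim \om(t_a(u))$.

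For the $\om$-modulus of the top-order derivatives ($|\al|+|\de|=m$), the case $\de=\be$ reduces to $\p^{(\al,\be)}P_a(u,w) = \p^\al\ph_a(u)$, which is $w$-independent and $\om$-H\"older in $u$ by the uniform $C^{m-|\be|,\om}$-bound on $\ph_a$. The remaining case $\de<\be$ is the main obstacle; setting $Q_a := \p^{(\al,\de)}P_a$, the preceding step yields $|Q_a|\lesssim \om(t_a)$, and one more differentiation (plus the same Gromov input, which is available since $p\ge m+1$) gives $|\nabla Q_a(u,w)|\lesssim \om(t_a(u))/t_a(u)$ on $\De(D_a\times 0)$. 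I would then conclude by the standard concavity dichotomy: for $|x_1-x_2|\le t_a(u_1)/2$, the quasiconvexity of the cell $D_a$ combined with the Lipschitz continuity of $t_a$ yields a rectifiable path from $x_1$ to $x_2$ in $\De(D_a\times 0)$ of length $\lesssim |x_1-x_2|$ along which $t_a\ge t_a(u_1)/2$, and a mean-value argument together with $\om(t_a(u_1))/t_a(u_1)\le \om(|x_1-x_2|)/|x_1-x_2|$ (decreasing of $\om(t)/t$) gives the $\om$-H\"older bound; for $|x_1-x_2|>t_a(u_1)/2$, the triangle inequality $|Q_a(x_1)-Q_a(x_2)|\le |Q_a(x_1)|+|Q_a(x_2)|\lesssim \om(t_a(u_1))+\om(t_a(u_2))\lesssim \om(|x_1-x_2|)$ suffices, using the Lipschitz property of $t_a$ and the subadditivity $\om(ct)\le c\,\om(t)$ for $c\ge 1$. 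Keeping every constant uniform in $a$ throughout this last case is the delicate point and rests on the uniform $\La_p$-stratification supplied by \Cref{thm:uniformLapstrat}.
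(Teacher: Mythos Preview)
Your proposal is correct and follows essentially the same route as the paper: split off $g_a$, use the Gromov-type bounds \eqref{eq:uniformGromovsup}--\eqref{eq:uniformGromov5} (already set up before the lemma) for the $L^\infty$ part, and for the top-order $\om$-H\"older estimate in the case $\de<\be$ run the standard close/far dichotomy with a mean-value step. One small simplification: in the ``close'' case $|x_1-x_2|\le t_a(u_1)/2$ you do not need quasiconvexity or a path inside $\De(D_a\times 0)$ --- since $|u_1-u_2|<d(u_1,\p D_a)/2$ the straight segment $[x_1,x_2]$ already lies in $D_a\times\R^\ell$, and the Gromov bounds on $\p^\al\ph_a$ are valid on all of $D_a$, which is exactly how the paper proceeds.
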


\begin{proof}
    By construction, each $h_a$ is of class $C^m$.
    Since $(g_a)_{a \in A}$ is a bounded family of $C^{m,\om}$-functions on $\R^n$, it suffices 
    to consider $(u,w) \mapsto F_a^{(0,\be)}(u,0) w^\be$.
    We have to 
    check that
    there is a constant $C>0$ such that, 
    for all $a \in A$, all
    $\ka = (\si,\ta) \in \N^k \times \N^\ell$, $|\ka| \le m$, and all $(u,w) \in \De(D_a \times 0)$,
    \begin{equation} \label{eq:Cmnorm}
        |\p^\ka \big( F_a^{(0,\be)}(u,0)w^\be\big)| \le C, 
    \end{equation}
    and, if $|\ka| = m$, for all $x_i = (u_i,w_i) \in \De(D_a \times 0)$, $i=1,2$,
    \begin{equation} \label{eq:Hnorm}
        |\p^\ka \big( F_a^{(0,\be)}(u_1,0)w_1^\be\big) - \p^\ka \big( F_a^{(0,\be)}(u_2,0)w_2^\be\big)|
        \le C\, \om(|x_1 -x_2|). 
    \end{equation}
    Fix $\ka = (\si,\ta)$ with $|\ka|\le m$.
    We may assume that $\ta \le \be$.
    Let us decompose $\si$ as $\si = \al + \ga$, where $\al,\ga \in \N^k$, 
    $|\al| \le m -|\be|$, and $\al$ is maximal with this property. 
    Thus, if $|\ga| >0$ then $|\al| + |\be| = m$. 
    To see \eqref{eq:Cmnorm}, observe that, by \eqref{eq:uniformGromovsup}, \eqref{eq:uniformW1}, and $|w| < \min\{1,d(u,\p D_a)\}$,
    \begin{align*}
        |\p^\ga F_a^{(\al,\be)}(u,0)w^{\be-\ta}| &\le  L\, \frac{\sup\{|F_a^{(\al,\be)}(v,0)| : v \in D_a,\, |v-u|< d(u,\p D_a)\}}{d(u,\p D_{a})^{|\ga|}}
        |w|^{|\be-\ta|}
        \\
                                                 &\le C L\, |w|^{|\be -\ta|-|\ga|} \le CL,
    \end{align*}
    where $C>0$ is the supremum in \eqref{eq:uniformW1};
    indeed, if $\ga \ne 0$ then  $|\al|+|\be| = m$ and 
    thus $|\be - \ta| \ge |\ga|$.

    Let us prove \eqref{eq:Hnorm}.
    Now $|\ka| = m$ and $|\al|+|\be| = m$, whence
    $|\be - \ta| = |\ga|$. 
    Then it is enough to show
    \begin{equation} \label{eq:toshow}
        |\p^\ga F_a^{(\al,\be)}(u_1,0)w_1^{\be-\ta} - \p^\ga  F_a^{(\al,\be)}(u_2,0)w_2^{\be-\ta}|
        \le C\, \om(|x_1 -x_2|). 
    \end{equation}
    If $\ga =0$, this follows from \eqref{eq:uniformW2}.
    So let us assume that $|\ga|\ge 1$.

    Set $t_a(u):= \tfrac{1}{2} d(u,\p D_a)$.
    Then 
    \begin{equation} \label{eq:tLip}
        |t_a(u_1) - t_a(u_2)| \le \tfrac{1}{2}|u_1 - u_2|.
    \end{equation}  
    Note that, for $i=1,2$, 
    \begin{equation} \label{eq:wtu}
        |w_i| <  d(u_i,\p D_a) = 2 t_a(u_i).
    \end{equation}

    We consider two cases.

    \subsubsection*{Case 1} Suppose that $t_a(u_i) \le |x_1 - x_2|$ for $i =1,2$.
    Then, by \eqref{eq:uniformGromov5} and \eqref{eq:wtu},
    \begin{align*}
        | \p^\ga F_a^{(\al,\be)}(u_i,0)w_i^{\be-\ta}| 
        \le L\,\om(2t_a(u_i)) 
        \le 2L\, \om(|x_1-x_2|), 
    \end{align*}
    since $\om$ is concave and increasing.

    \subsubsection*{Case 2} Assume (without loss of generality) that $t_a(u_1)> |x_1-x_2|$. 
    Then $|u_1 - u_2| \le |x_1-x_2| < t_a(u_1) = \tfrac{1}{2} d(u_1,\p D_a)$ 
    so that the line segment $[x_1,x_2]$ is contained in $D_a \times \R^\ell$.
    Furthermore, if $u \in [u_1,u_2]$ then, by \eqref{eq:tLip},
    \[
        |t_a(u_1) - t_a(u)| \le  \tfrac{1}{2}|u_1 - u| \le \tfrac{1}{2} |x_1 - x_2| < 
        \tfrac{1}{2} t_a(u_1),
    \]
    whence
    \[
        \tfrac{1}{2}  t_a(u_1) < t_a(u) < \tfrac{3}{2} t_a(u_1), \quad u \in [u_1,u_2].
    \]
    The left-hand side of \eqref{eq:toshow} is bounded by 
    \begin{align*}
        |\p^\ga F_a^{(\al,\be)}(u_1,0) - \p^\ga  F_a^{(\al,\be)}(u_2,0)| |w_1|^{|\be-\ta|}  
        + |\p^\ga  F_a^{(\al,\be)}(u_2,0)| |w_1^{\be -\ta} - w_2^{\be -\ta}|.
    \end{align*}
    By \eqref{eq:uniformGromov5} and \eqref{eq:wtu},
    \begin{align*}
        \MoveEqLeft
        |\p^\ga F_a^{(\al,\be)}(u_1,0) - \p^\ga  F_a^{(\al,\be)}(u_2,0)||w_1|^{|\be-\ta|} 
        \\
    &\lesssim \sup_{u \in [u_1,u_2]} \sum_{j=1}^k |\p^{\ga+(j)} F_a^{(\al,\be)}(u,0)| |u_1 - u_2| \, t_a(u_1)^{|\ga|}
    \\
    &\lesssim \sup_{u \in [u_1,u_2]}  \frac{\om(2t_a(u))}{t_a(u)^{|\ga|+1}} |u_1 - u_2|\, t_a(u_1)^{|\ga|}
    \\
    &\lesssim  \frac{\om(t_a(u_1))}{t_a(u_1)} |x_1 - x_2|
    \\
    &\le \om(|x_1 - x_2|),
    \end{align*}
    since $\om$ is concave.
    Again, by \eqref{eq:uniformGromov5} and \eqref{eq:wtu},
    \begin{align*}
        \MoveEqLeft 
        |\p^\ga  F_a^{(\al,\be)}(u_2,0)| |w_1^{\be -\ta} - w_2^{\be -\ta}|
        \\
    &\lesssim \frac{\om(2t_a(u_2))}{t_a(u_2)^{|\ga|}} |w_1 -w_2|\, t_a(u_1)^{|\ga|-1}
    \lesssim \frac{\om(t_a(u_1))}{t_a(u_1)} |x_1 -x_2| \le \om(|x_1-x_2|).
    \end{align*}
    The proof is complete.
\end{proof}

The proof shows that \eqref{eq:Hnorm} actually holds on the larger set $\{(u,w) \in D_a \times \R^\ell : |w| < d(u,\p D_a)\}$.

\begin{lemma} \label{lem:uniformStep1.1}
    For each $a \in A$,
    \begin{equation} \label{eq:uniformStep1.1}
        |\p^\ka h_a(u,w)| \le C\, \om(d(u,\p D_{a})) d(u,\p D_{a})^{m-|\ka|}, 
    \end{equation}
    for all $(u,w) \in \De(D_{a} \times 0)$, all 
    $\ka \in \N^n$, $|\ka| \le m$, 
    and a constant $C>0$ independent of $a \in A$. 
\end{lemma}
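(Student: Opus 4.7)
I will bound $|\p^\ka h_a(u,w)|$ by $\om(d)\,d^{m-|\ka|}$, with $d := d(u,\p D_a)$, treating $d\ge 1$ and $d<1$ separately. The case $d\ge 1$ is immediate from \Cref{lem:uniformStep1.2}: that lemma gives $|\p^\ka h_a(u,w)| \le C$ uniformly, and $\om(d)\,d^{m-|\ka|} \ge \om(1)$ whenever $d\ge 1$ and $|\ka|\le m$, so the stated bound holds after dividing through.

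For $d<1$, pick $u'\in\p D_a$ realising $|u-u'|=d$. The crucial observation is that the jet $\tilde F_a := F_a - J^m_{E_a}(g_a)$ is Whitney flat on $\p D_a \times 0$: on $Z_a \times 0$ we have $J^m_{E_a}(g_a)=G_a=F_a$, while $F_a$ is already flat on $\p T_a \times 0$, and $\p D_a \subseteq Z_a \cup \p T_a$. Consequently, $\p^{\ka'} g_a(u',0)=F_a^{\ka'}(u',0)$ for every $|\ka'|\le m$, so the Taylor expansion of $g_a$ at $(u',0)$ yields the decomposition
\[
    \p^\ka h_a(u,w) = \p^\ka\Big[\tfrac{1}{\be!}F_a^{(0,\be)}(u,0)\,w^\be\Big] - \p^\ka T^m_{(u',0)}g_a(u,w) - \p^\ka R^m_{(u',0)}g_a(u,w).
\]
Writing $\ka=(\si,\ta)$ and using the reduction that only the $F_a^{(\cdot,\be)}$-components of the jet survive, the first two terms combine into $\tfrac{w^{\be-\ta}}{(\be-\ta)!}\bigl(\p^\si F_a^{(0,\be)}(u,0) - T^{m-|\be|-|\si|}_{u'}[\p^\si F_a^{(0,\be)}](u)\bigr)$ when $\ta\le\be$, under the convention that the Taylor polynomial is $0$ when $|\si|>m-|\be|$; both terms vanish separately when $\ta\not\le\be$.

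The bracket is then controlled by a dichotomy on $|\si|+|\be|$ versus $m$. For $|\si|+|\be|\le m$, Glaeser's identification (see \cite[Remark 5]{Kurdyka:2014aa}) tells us that $\p^\si F_a^{(0,\be)}$ is $C^{m-|\be|-|\si|,\om}$-bounded on $T_a$ with norm uniform in $a$, so the standard Taylor remainder estimate yields a bound of $\om(d)\,d^{m-|\be|-|\si|}$; multiplying by $|w^{\be-\ta}|\le d^{|\be|-|\ta|}$ produces $O(\om(d)\,d^{m-|\ka|})$. For $|\si|+|\be|>m$ the Taylor polynomial vanishes identically, and I control $\p^\si F_a^{(0,\be)}(u,0)$ directly by applying \Cref{prop:uniformGromov4} to a top-order derivative $\p^\ga F_a^{(0,\be)}$ with $|\ga|=m-|\be|$ (which admits $\om$ as a modulus of continuity): this yields $|\p^\si F_a^{(0,\be)}(u,0)|\lesssim \om(d)/d^{|\si|-(m-|\be|)}$, again of the desired size once multiplied by $|w|^{|\be-\ta|}$. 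Finally, $|\p^\ka R^m_{(u',0)}g_a(u,w)|$ is handled by the standard $C^{m,\om}$ Taylor remainder estimate -- valid since $(g_a)_{a\in A}$ is uniformly bounded in $C^{m,\om}(\R^n)$ by the induction hypothesis -- giving $\lesssim \om(d)\,d^{m-|\ka|}$ thanks to $|(u,w)-(u',0)|\le\sqrt{2}\,d$.

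I expect the range $|\si|+|\be|>m$ to be the main technical obstacle: the Glaeser/Taylor argument is unavailable there, and one must fall back on the refined Gromov inequality with modulus of continuity provided by \Cref{prop:uniformGromov4}, carefully exploiting that a top-order derivative of the $C^{m-|\be|,\om}$-function $F_a^{(0,\be)}$ retains $\om$ as a modulus of continuity.
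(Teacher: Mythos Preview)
Your proposal is correct and uses the same underlying ingredients as the paper (flatness of $F_a - J^m_{E_a}(g_a)$ on $\p D_a \times 0$, Glaeser's identification, the Gromov-type bound \eqref{eq:uniformGromov5}, and the uniform $C^{m,\om}$-control on $g_a$), but the organization is genuinely different. The paper splits according to whether the nearest boundary point $u'$ lies in $T_a$ or in $\p T_a$. In the first case it applies Taylor's theorem directly to $h_a$, using \Cref{lem:uniformStep1.2} and the fact that $h_a$ is actually $C^m$ at $(u',0)$ with vanishing $m$-jet (this needs $F_a^{(0,\be)}$ to be $C^p$ at $u'$, which is why $u'\in T_a$ matters). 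In the second case the paper treats $|\ka|=m$ by bounding the $g_a$- and $F_a$-parts separately and then descends to $|\ka|<m$ by an induction on $m-|\ka|$ along the segment $(x,x')$.

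Your decomposition $h_a = \big(\text{$F$-part} - T^m_{(u',0)}g_a\big) - R^m_{(u',0)}g_a$ treats both situations at once: it avoids asking whether $h_a$ itself is $C^m$ at $(u',0)$ by only using the $m$-jet of $g_a$ (always available, since $g_a\in C^{m,\om}(\R^n)$), and it replaces the paper's induction on $m-|\ka|$ by the explicit Taylor remainder of $g_a$ together with the dichotomy $|\si|+|\be|\le m$ versus $|\si|+|\be|>m$. This buys a cleaner, case-free argument once $d<1$; the paper's route is shorter in Case~A but needs the extra induction in Case~B. One small correction: in the range $|\si|+|\be|>m$ you should invoke the already-established inequality \eqref{eq:uniformGromov5} on $D_a$ (with $(\al,\be)$ satisfying $|\al|+|\be|=m$ and $\al+\ga=\si$) rather than \Cref{prop:uniformGromov4} itself, since a fresh application of that proposition would in principle produce a different stratification than the fixed $(\sD_a)_{a\in A}$.
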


\begin{proof}
    Fix $x= (u,w) \in \De(D_a \times 0)$.
    If $d(u,\p D_a) < d(u,\p T_a)$, then let $u' \in \p D_a$ be such that $|u-u'| = d(u,\p D_a)$ 
    and set $x' = (u',0)$.
    The open line segment $(x,x')$ 
    is contained in $\De(D_a \times 0)$.
    Since $u' \in T_a$, where $F_a^{(0,\be)}$ is of class $C^p$, and $h_a$ is of class $C^{m,\om}$ on $\De(D_a \times 0)$ 
    with $C^{m,\om}$-norm bounded by a constant independent of $a \in A$, by \Cref{lem:uniformStep1.2},  
    we may conclude the assertion from Taylor's theorem.

    So we assume that $d(u,\p D_a) = d(u,\p T_a)$. 
    Let $u' \in \p T_a$ such that $|u-u'| = d(u,\p T_a)$.
    Let us first assume that  $\ka = (\si,\ta) \in \N^k \times \N^\ell$ with $|\ka| = m$.
    By construction,  $\p^\ka g_a(u',0)=0$ so that
    \[
        |\p^\ka g_a(u,w)| = |\p^\ka g_a(u,w) - \p^\ka g_a(u',0)| \lesssim \om(|u-u'|) 
        = \om(d(u,\p D_a)), 
    \]
    where we used that $|w| < d(u,\p D_a) = |u-u'|$. 
    Hence it suffices to consider 
    $\p^\ka \big(F_a^{(0,\be)}(u,0)w^\be \big)$ or equivalently 
    $\p^\ga F_a^{(\al,\be)}(u,0)w^{\be-\ta}$, where
    $\al,\ga \in \N^k$ are such that $\al+\ga = \si$, $|\al|+|\be|=m$, and $\ta \le \be$.
    Thus $|\be-\ta|=|\ga|$. 
    If $|\ga|\ge 1$, \eqref{eq:uniformGromov5} implies 
    \begin{align*}
        |\p^\ga F_a^{(\al,\be)}(u,0)w^{\be-\ta}| 
        \le L\, \frac{\om(d(u,\p D_a))}{d(u,\p D_a)^{|\ga|}} |w|^{|\ga|} 
        \le L\, \om(d(u,\p D_a)),
    \end{align*}
    and, if $\ga = 0$, \eqref{eq:uniformW2} gives
    \begin{align*}
        |\p^\ga F_a^{(\al,\be)}(u,0)w^{\be-\ta}| 
        = 
        |F_a^{(\al,\be)}(u,0)| \lesssim \om(d(u,\p T_a)) =  \om(d(u,\p D_a)),
    \end{align*}
    since $(F_a)_{a \in A}$ is flat on $(\p T_a \times 0)_{a \in A}$.

    To prove the statement for $|\ka|<m$, we proceed by induction on $m - |\ka|$.
    Suppose that the assertion is already shown for every $\la \in \N^n$ with $|\ka| < |\la|\le m$.
    Since the open line segment $(x,x')$ connecting $x=(u,w)$ and $x'=(u',0)$ 
    is contained in $\De(D_a \times 0)$,
    we have, by induction hypothesis, where $x''=(u'',w'')$,
    \begin{align*}
        |\p^\ka h_a(u,w)| 
        &\le 
        \sup_{x'' \in (x,x')} \sum_{j=1}^n |\p^{\ka+(j)} h_a(u'',w'')| |x-x'|
        \\
        &\lesssim 
        \sup_{x'' \in (x,x')} \om(d(u'',\p D_a)) d(u'',\p D_a)^{m-|\ka|-1}  \, d(u,\p D_a)
        \\
        &\lesssim 
        \om(d(u,\p D_a)) d(u,\p D_a)^{m-|\ka|},
    \end{align*}
    since $d(u'',\p D_a) \le d(u,\p D_a)$.
\end{proof}

It follows from \Cref{lem:uniformStep1.2} and \Cref{lem:uniformStep1.1}  
that, for each $a \in A$,
\begin{equation} \label{eq:adapted}
    |\p^\ka h_a(u,w)| \le C\, \om(\min\{1,d(u,\p D_{a})\}) \min\{1,d(u,\p D_{a})\}^{m-|\ka|}, 
\end{equation}
for all $(u,w) \in \De(D_{a} \times 0)$, all 
$\ka \in \N^n$, $|\ka| \le m$, 
and a constant $C>0$ independent of $a \in A$. 
Indeed, by \Cref{lem:uniformStep1.2}, 
\begin{equation*}
    |\p^\ka h_a(u,w)| \le C = \frac{C}{\om(1)} \cdot \om(1) 1^{m-|\ka|}, \quad |\ka|\le m, 
\end{equation*}
for all $(u,w) \in \De(D_a \times 0)$, which, together with \eqref{eq:uniformStep1.1}, gives \eqref{eq:adapted}.

Now \Cref{prop:uniformPawlucki39} (see also \Cref{rem:uniformPawlucki39})
implies that the family $(f_a)_{a \in A}$ is bounded in $C^{m,\om}(\R^n)$.
Indeed, \Cref{lem:uniformStep1.2}, \Cref{lem:uniformStep1.1},  and \eqref{eq:adapted} guarantee that the assumptions of 
\Cref{prop:uniformPawlucki39} are satisfied, where $t_a(u) = \min\{1, d(u,\p D_{a})\}$. 
Condition \eqref{eq:uniformr} holds thanks to \eqref{eq:rt0} and \Cref{rem:Krt}. 
We also get 
\begin{equation} \label{eq:Step1add}
    |\p^\ka f_a(u,w)| \le C\, \om(d(u,\p D_{a})) d(u,\p D_{a})^{m-|\ka|}, 
\end{equation}
for all $(u,w) \in \De(D_{a} \times 0)$, all 
$\ka \in \N^n$, $|\ka| \le m$, 
and a constant $C>0$ independent of $a \in A$.

\subsection*{Step 2} 

For all $a \in A$, $E'_a = \ol S_a = \ol T_a \times 0$, but possibly $E'_a$ 
is a proper subset of $E_a$ for some $a \in A$. 
Consider the definable family $(r_a)_{a \in A}$ of functions $r_a : T_a \to (0,\infty)$ given by
\[
    r_a(u) := 
    \begin{cases}
        \inf \{|w| : (u,w) \in E_a \setminus S_a\} & \text{ if } \{w : (u,w) \in E_a \setminus S_a\} \ne \emptyset,
        \\
        1 & \text{ otherwise.}
    \end{cases}
\]
Since $F_a$ is flat on $E_a \setminus S_a$ we have (by \eqref{eq:uniformW1} and \eqref{eq:uniformW2}) 
\begin{equation} \label{eq:Step2flat}
    |F^\ka_a(u,0)| \le C\, \om(r_a(u)) r_a(u)^{m-|\ka|}
\end{equation}
for all $u \in T_a$, all $\ka \in \N^n$, $|\ka| \le m$, and a constant $C>0$ independent of $a \in A$.
(In the case that $\{w : (u,w) \in E_a \setminus S_a\} = \emptyset$, it follows from \eqref{eq:uniformW1} and 
we have to replace $C$ by $C/\om(1)$.)

By \Cref{thm:uniformLapstrat} and \Cref{prop:forr}, 
there is a uniform $\La_p$-stratification of $(\ol T_a)_{a \in A}$ such that 
\[
    \ol T_a = Q_{a,1} \cup \cdots \cup Q_{a,s} \cup Z_a,
\]
where, for each $a \in A$ and each $i=1,\ldots,s$, $Z_a$ is closed with $\dim Z_a <k$, each $Q_{a,i}$ 
is an open $k$-dimensional $\La_p$-cell with constant independent of $a \in A$, 
$r_a$ is $C^p$ on $Q_{a,i}$, 
and either 
\begin{itemize}
    \item[(i)]  $|\p_j r_a| \le 1$, for each $j=1,\ldots,k$, on $Q_{a,i}$, 
        in which case we may assume that $|\p^\al r_a(u)| d(u,\p Q_{a,i})^{|\al|-1}$, $1 \le |\al|\le p$, is bounded on $Q_{a,i}$ 
        by a constant independent of $a \in A$,
        by \Cref{cor:La1top2},
        or
    \item[(ii)] $|\p_j r_a(u)| >1$ for some $j$ on $Q_{a,i}$.
\end{itemize}

By ($\mathbf{I}_{k-1}$),
we may assume that $(F_a)_{a \in A}$ is flat on $(Z_a \times 0)_{a \in A}$ and 
hence on $(\p Q_{a,i} \times 0)_{a \in A}$ for each $i=1,\ldots,s$.

Now it is enough to show that, for every $i=1,\ldots,s$, $(F_a|_{E_a \cap (\ol Q_{a,i} \times \R^\ell)})_{a \in A}$
admits a definable bounded family $(f_{a,i})_{a \in A}$ of $C^{m,\om}$-extensions to $\R^n$ 
that is $m$-flat outside $(\De(Q_{a,i} \times 0))_{a \in A}$ 
and $C^p$ outside $(\ol Q_{a,i} \times 0)_{a \in A}$.
To this end, we fix $i$ and drop it from the notation.

Step 1 gives a definable bounded family $(g_a)_{a \in A}$ of $C^{m,\om}$-extensions 
to $\R^n$ of $(F_a|_{\ol Q_{a} \times 0})_{a \in A}$
that is $m$-flat outside $(\De(Q_a \times 0)_{a \in A}$ 
and $C^p$ outside $(\ol Q_a \times 0)_{a \in A}$.
By Taylor's formula and \eqref{eq:Step2flat}, for each $a \in A$,
\begin{equation} \label{eq:Step22}
    |\p^\ka g_a(u,w)| \le C \, \om(r_a(u))\, r_a(u)^{m -|\ka|} 
\end{equation}
for all $(u,w) \in Q_{a} \times \R^\ell$, $|w| < C' \, r_a(u)$, and all $\ka \in \N^n$, $|\ka| \le m$,
where $C,C'>0$ are independent of $a \in A$.
Similarly, we have
\begin{equation} \label{eq:Step2add}
    |\p^\ka g_a(u,w)| \le C \, \om(d(u,\p Q_a))\, d(u,\p Q_a)^{m -|\ka|} 
\end{equation}
for all $(u,w) \in Q_a \times \R^\ell$, 
$|w| < C' \, d(u,\p Q_a)$, and all $\ka \in \N^n$, $|\ka| \le m$,
where $C,C'>0$ are independent of $a \in A$.

In Case (ii), one can easily see (see \cite[p. 94]{Kurdyka:2014aa}) that, for each $a \in A$, 
$r_a(u) \ge d(u,\p Q_a)$ for $u \in Q_a$, 
so that $E_a \setminus S_a \subseteq \R^n \setminus \De(Q_a \times 0)$.
That means that $g_a$ is a $C^{m,\om}$-extension to $\R^n$ of $F_a|_{E_a \cap (\ol Q_a \times 0)}$, 
and we are done.

In Case (i), a modification is necessary: we define, for each $a \in A$, 
\[
    f_a(u,w) := 
    \begin{cases}
        \prod_{i=1}^\ell \xi\Big(\sqrt{\ell} \frac{w_i}{r_a(u)}\Big) \cdot g_a(u,w) &\text{ if } u \in Q_a,
        \\
        0 &\text{ otherwise,}
    \end{cases}
\]
where $\xi : \R \to \R$ is a semialgebraic $C^p$-function that is $1$ near $0$ and vanishes outside $(-1,1)$.
Note that $(f_a)_{a \in A}$ is a definable family of functions $f_a : \R^n \to \R$.
Moreover, we set
\[
    \De'(Q_{a} \times 0) := \{(u,w) \in Q_{a} \times \R^\ell : |w| < t_a(u) \}
\]
with
\[
    t_a(u):= \min\{r_a(u), d(u,\p Q_{a})\},
\]
and claim that, for each $a \in A$, $f_a$ is of class $C^{m,\om}$ on $\De'(Q_a \times 0)$ with $C^{m,\om}$-norm bounded by a constant independent of $a \in A$, 
and
\begin{equation} \label{eq:Step23}
    |\p^\ka f_a(u,w)| \le C \, \om(t_a(u))\, t_a(u)^{m -|\ka|} 
\end{equation}
for $(u,w) \in \De'(Q_a \times 0)$ and all $\ka \in \N^n$, $|\ka|\le m$,
where $C>0$ is independent of $a \in A$.

To see this, let us first assume that $r_a(u) < d(u,\p Q_a)$ so that $t_a(u) =r_a(u)$. 
Since we are in Case (i), we find that, thanks to \eqref{eq:inv}, 
\[
    \Big|\p^\al \Big(\frac{1}{r_a} \Big)(u) \Big| \le C\, r_a(u)^{-|\al|-1},\quad u \in Q_a, \, |\al|\le p,
\]
for a constant $C>0$ independent of $a \in A$.
Thus, 
the claim follows from \eqref{eq:Step22} and \Cref{prop:uniformPawlucki39}.

If $r_a(u) \ge d(u,\p Q_a)$ (that is, $t_a(u) = d(u,\p Q_a)$),
then similarly  
\[
    \Big|\p^\al \Big( \frac{1}{r_a} \Big)(u) \Big| \le C\, d(u,\p Q_a)^{-|\al|-1},\quad u \in Q_a, \, |\al|\le p,
\]
Then we infer the claim  from \eqref{eq:Step2add} and \Cref{prop:uniformPawlucki39}.

We conclude that $(f_a)_{a \in A}$ is the required family of definable bounded $C^{m,\om}$-extensions to 
$\R^n$ of 
$(F_a|_{E_a \cap (\ol Q_{a} \times \R^\ell)})_{a \in A}$ 
that is $m$-flat outside $(\De(Q_a \times 0))_{a \in A}$ 
and $C^p$ outside $(\ol Q_a \times 0)_{a \in A}$.
This ends Step 2.

\subsection*{Step 3}

The general case of \Cref{prop:afterred}:
for all $a \in A$,
$S_a = \Ga(\vh_a)$, $E_a' = \ol S_a \subseteq E_a$, where  
$\vh_a : T_a \to \R^\ell$ is not necessarily identically $0$. 
Consider the definable family $(s_a)_{a \in A}$ of functions $s_a : S_a \to (0,\infty)$ given by 
\[
    s_a(x) := \min\{d(x,E_a \setminus S_a), d(x,\p S_a)\}, \quad x \in S_a.
\]

For each $a \in A$, let $\ol \vh_a : \ol T_a \to \R^\ell$ be the continuous extension of $\vh_a$; see \Cref{ssec:regmap}.
Furthermore, we consider the maps
\begin{align*} 
    \vh_{a,\pm} &:  T_a \times \R^\ell \to T_a \times \R^\ell, \quad (u,w) \mapsto (u,w \pm \vh_a(u))
    \intertext{and}
    \ol \vh_{a,\pm} &: \ol T_a \times \R^\ell \to \ol T_a \times \R^\ell, \quad (u,w) \mapsto (u,w \pm \ol \vh_a(u)).
\end{align*}
Note that $\ol \vh_{a,+}$ is a bi-Lipschitz homeomorphism
with inverse $\ol \vh_{a,-}$ and Lipschitz constants independent of $a \in A$.

Since $F_a$ is flat on $E_a \setminus S_a$ and on $\p S_a$, we have (by \eqref{eq:uniformW2})
\begin{equation} \label{eq:Step3flat}
    |F_a^\ka(x)| \le C \, \om(s_a(x)) s_a(x)^{m-|\ka|}
\end{equation}
for all $x \in S_a$, all $\ka \in \N^n$, $|\ka|\le m$, and a constant $C>0$ independent of $a \in A$.
Setting 
\[
    t_a(u) := s_a(u,\vh_a(u)), \quad u \in T_a, 
\]
we have 
\begin{equation} \label{eq:Step3flatu}
    |F_a^\ka(u,\vh_a(u))| \le C \, \om(t_a(u)) t_a(u)^{m-|\ka|}, \quad u \in T_a, \, |\ka|\le m.
\end{equation}
The uniformity of the constants in the definition of $T_a$ and $\vh_a$ 
implies that $(\ol S_a)_{a \in A}$ and $(\p T_a \times \R^\ell)_{a \in A}$ are 
uniformly 
separated.
Observe that (by the definition of $s_a$) 
$t_a(u) \le C' \, d(u,\p T_a)$ for $C'>0$ independent of $a \in A$, since $\ol \vh_{a,+}$ is a bi-Lipschitz homeomorphism with 
Lipschitz constants independent of $a \in A$.

Thus \Cref{prop:uniformpullback} (and \Cref{lem:contexofjets}) implies 
that $(G_a)_{a \in A}$, where $G_a:=\vh_{a,+}^* (F_a|_{S_a})$, is a definable bounded family of Whitney jets 
of class $C^{m,\om}$ on $(T_a \times 0)_{a \in A}$
and extends to a definable bounded family of Whitney jets 
of class $C^{m,\om}$ on $(\ol T_a \times 0)_{a \in A}$
which is flat on $(\p T_a \times 0)_{a \in A}$ and such that 
\begin{equation} \label{eq:Step3flatG}
    |G_a^\ka(u,0)| \le C \, \om(t_a(u)) t_a(u)^{m-|\ka|}
\end{equation}
for all $u \in T_a$, all $\ka \in \N^n$, $|\ka|\le m$, and a constant $C>0$ independent of $a \in A$.
For each $a \in A$,
set $\tilde E_a := \ol \vh_{a,-}(E_a \cap (\ol T_a \times \R^\ell))$. 
Since $\ol \vh_{a,+}$ is a bi-Lipschitz homeomorphism with constants independent of $a \in A$, 
we may conclude
\begin{equation} \label{eq:Step3flatG2}
    |G_a^\ka(u,0)| \le C \, \om(d((u,0), \tilde E_a \setminus (T_a \times 0))) d((u,0), \tilde E_a \setminus (T_a \times 0))^{m-|\ka|}
\end{equation}
for all $u \in T_a$, all $\ka \in \N^n$, $|\ka|\le m$, and a constant $C>0$ independent of $a \in A$.
Thus $(\tilde G_a)_{a \in A}$, where 
\[
    \tilde G_a(u,w) := 
    \begin{cases}
        G_a(u,0) & \text{ if } (u,w) \in \ol T_a \times 0,
        \\
        0 & \text{ if } (u,w) \in \tilde E_a \setminus (\ol T_a \times 0),
    \end{cases}
\]
is a definable bounded family of Whitney jets of class $C^{m,\om}$ on $(\tilde E_a)_{a \in E}$ 
that is flat on $(\tilde E_a \setminus (T_a \times 0))_{a \in A}$.

By Step 2, 
there exists a definable bounded family $(\tilde g_a)_{a \in A}$ of $C^{m,\om}$-extensions 
to $\R^n$ of $(\tilde G_a)_{a \in A}$
that is $m$-flat on $(\tilde E_a \setminus (T_a \times 0))_{a \in A}$ as well as outside $(T_a \times \R^\ell)_{a \in A}$ and
$C^p$ outside $(\ol T_a \times 0)_{a \in A}$.

For each $a \in A$, define $f_a : \R^n \to \R$ by 
\[
    f_a(u,w) := 
    \begin{cases}
        (\tilde g_a\o \vh_{a,-})(u,w)  & \text{ if } (u,w) \in T_a \times \R^\ell,
        \\
        0 & \text{ otherwise.} 
    \end{cases}
\]
Then $(f_a)_{a \in A}$ is a definable bounded family of $C^{m,\om}$-extensions to $\R^n$ of $(F_a)_{a \in A}$
that is $C^p$ outside $(\ol S_a)_{a \in A}$, 
which follows again from \Cref{prop:uniformpullback} (with $M_a = T_a \times \R^\ell$ and $U_a = T_a$).

This completes the proof of \Cref{prop:afterred}, hence of ($\mathbf{I}_{k}$), and thus the proof of \Cref{thm:mainuniform}.

\section{Further applications} 
\label{sec:applications}

We present a local version of \Cref{thm:mainuniform}, 
we discuss the dependence of the bounded extension on the modulus of continuity which leads to the proof of \Cref{thm:Cmbd}, 
and finally 
we obtain a definable version of a correspondence between Whitney jets of class $C^{m,\om}$ and certain Lipschitz maps,
which was first observed by Shvartsman \cite{Shvartsman:2008aa}. 

\subsection{Definable $C^{m,\om}_{\on{loc}}$-extensions} \label{ssec:local}

Let $U \subseteq \R^n$ be open.
We denote by $C^{m,\om}_{\on{loc}}(U)$ the space of functions $f : U \to \R$ 
such that $f|_V \in C^{m,\om}(V)$, for all relatively compact open subsets $V \subseteq U$.

Let $E \subseteq \R^n$ be a closed set.
An $m$-jet $F$ on $E$ is called a \emph{(definable) Whitney jet of class $C^{m,\om}_{\on{loc}}$ on $E$}
if $F|_K$ is a (definable) Whitney jet of class $C^{m,\om}$ on $K$, for all (definable) compact subsets $K \subseteq E$.
A $C^{m,\om}_{\on{loc}}$-function $f : \R^n \to \R$ is a \emph{$C^{m,\om}_{\on{loc}}$-extension to $\R^n$ of $F$} 
if $J_E^m(f) = F$.

Let $(E_a)_{a \in A}$ be a family of closed sets $E_a \subseteq \R^n$.
A family $(F_a)_{a \in A}$ of Whitney jets of class $C^{m,\om}_{\on{loc}}$ on $E_a$
is called a \emph{(definable) bounded family of Whitney jets of class $C^{m,\om}_{\on{loc}}$}
if $(F_a|_{K_a})_{a \in A}$ is a (definable) bounded family of Whitney jets of class $C^{m,\om}$ 
for each (definable) subfamily $(K_a)_{a \in A}$ of $(E_a)_{a \in A}$ consisting of 
(definable) compact sets $K_a \subseteq E_a$.

A family $(f_a)_{a \in A}$ of $C^{m,\om}_{\on{loc}}$-functions $f_a : \R^n \to \R$ 
is called a \emph{(definable) bounded family of $C^{m,\om}_{\on{loc}}$-extensions to $\R^n$ of $(F_a)_{a \in A}$} 
if $f_a$ is a $C^{m,\om}_{\on{loc}}$-extension to $\R^n$ of $F_a$, for each $a \in A$, 
and, for each (definable) relatively compact subset $V \subseteq \R^n$, 
$(f_a|_V)_{a \in A}$ is a (definable) bounded family of $C^{m,\om}$-functions.

\begin{corollary} \label{cor:local}
    Let $0 \le m\le p$ be integers. Let $\om$ be a modulus of continuity.
    Let $(E_a)_{a \in A}$ be a definable family of closed subsets $E_a$ of $\R^n$.
    For any definable bounded family $(F_a)_{a \in A}$ of Whitney jets of class $C^{m,\om}_{\on{loc}}$ on $(E_a)_{a \in A}$
    there exists a definable bounded family $(f_a)_{a \in A}$ of $C^{m,\om}_{\on{loc}}$-extensions to $\R^n$ 
    of $(F_a)_{a \in A}$ that is $C^p$ outside $(E_a)_{a \in A}$.
\end{corollary}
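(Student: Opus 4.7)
The plan is to reduce \Cref{cor:local} to \Cref{thm:mainuniform} by exhausting $\R^n$ with definable compacta and gluing with a semialgebraic partition of unity. For each integer $j\ge 1$ set $K_j := \overline{B(0,j)}$, a definable compact set. Then $(E_a \cap K_{j+1})_{a\in A}$ is a definable subfamily of $(E_a)_{a \in A}$ consisting of compact sets, and by the very definition of a definable bounded family of Whitney jets of class $C^{m,\om}_{\on{loc}}$, the restrictions $(F_a|_{E_a \cap K_{j+1}})_{a\in A}$ form a definable bounded family of Whitney jets of class $C^{m,\om}$. Applying \Cref{thm:mainuniform} separately for each $j$ produces a definable bounded family $(f_{a,j})_{a\in A}$ of $C^{m,\om}$-extensions of $(F_a|_{E_a\cap K_{j+1}})_{a\in A}$ to $\R^n$ that are of class $C^p$ outside $E_a\cap K_{j+1}$.

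Next, fix a locally finite semialgebraic $C^p$ partition of unity $(\phi_j)_{j\ge 1}$ on $\R^n$ with $\on{supp}\phi_j \subseteq K_{j+1}\setminus \on{int} K_{j-2}$ (so that at most three summands are nonzero at any point) and $\sum_{j\ge 1}\phi_j \equiv 1$. Define
\[
    f_a(x) := \sum_{j\ge 1} \phi_j(x)\, f_{a,j}(x), \quad x \in \R^n,\, a \in A.
\]
This is a locally finite sum, hence defines a $C^{m,\om}_{\on{loc}}$-function. It is of class $C^p$ outside $E_a$, since for $x \notin E_a$ every $f_{a,j}$ with $\phi_j\not\equiv 0$ near $x$ is of class $C^p$ near $x$ (because $\on{supp}\phi_j \subseteq K_{j+1}$ and thus $x \notin E_a \supseteq E_a\cap K_{j+1}$). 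The Leibniz rule, combined with $\p^\be(\sum_j\phi_j) = 0$ for $\be\ne 0$ and the jet identity $\p^\ka f_{a,j}(x) = F_a^\ka(x)$ valid for $x \in E_a\cap K_{j+1}$ and $|\ka|\le m$, yields $J^m_{E_a}(f_a) = F_a$, so $f_a$ is a genuine $C^{m,\om}_{\on{loc}}$-extension of $F_a$.

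It remains to verify that $(f_a)_{a\in A}$ is a \emph{definable bounded} family of $C^{m,\om}_{\on{loc}}$-extensions. For any definable relatively compact $V \subseteq \R^n$ the set $J_V := \{j : \phi_j|_V \not\equiv 0\}$ is finite and independent of $a$, so
\[
    f_a|_V = \sum_{j\in J_V}\phi_j|_V \cdot f_{a,j}|_V
\]
is a finite sum of definable functions depending definably on $a$; hence $(f_a|_V)_{a\in A}$ is a definable family. The standard product estimate in $C^{m,\om}$, together with the uniform bound on $\|f_{a,j}\|_{C^{m,\om}(\R^n)}$ given by \Cref{thm:mainuniform} and the fixed $C^p$-norms of $\phi_j$ on $V$, then gives $\sup_{a\in A}\|f_a|_V\|_{C^{m,\om}(V)} < \infty$. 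The only subtle point—and the place where a naive attempt could fail—is that the global sum $\sum_j\phi_j f_{a,j}$ need not be definable in the ambient o-minimal structure; we circumvent this precisely by exploiting that the conclusion to be proved is local, so only finite restrictions enter definability and boundedness checks.
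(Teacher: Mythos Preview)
Your proof is correct and follows essentially the same approach as the paper: exhaust $\R^n$ by definable compacta, apply \Cref{thm:mainuniform} on each piece, and glue with a definable $C^p$ partition of unity, verifying the extension property via the Leibniz rule and noting that definability and boundedness need only be checked on definable relatively compact sets (where the sum is finite). The paper uses open annuli $U_k=\{k-2<|x|<k\}$ rather than your closed balls and constructs the partition of unity explicitly, but these are cosmetic differences; in particular, your observation that the global sum may fail to be definable is exactly the content of the paper's remark following the proof.
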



\begin{proof}
   For integers $k \ge 1$, consider the definable sets $U_k := \{x \in \R^n : k-2 < |x| < k\}$; note that $U_1$ is the unit ball, 
   $U_2$ is a punctured ball, 
   and $U_k$, for $k\ge 3$, are annuli centered at the origin.
   The sets $U_k$, for $k \ge 1$, form an open cover of $\R^n$ with the property that $U_k \cap U_\ell \ne \emptyset$ if 
   and only if $|k-\ell|\le 1$.
   Fix an integer $p \ge m+1$. 
   There exists a partition of unity $\{\vh_k\}_{k\ge 1}$ of class $C^p$ subordinated to the cover $\{U_k\}_{k \ge 1}$, where each $\vh_k$ is definable:  
   $\vh_k \in C^p(\R^n)$, $\vh_k \ge 0$, $\on{supp} \vh_k \subseteq U_k$, for all $k\ge 1$, 
   the family $\{\on{supp} \vh_k\}_{k \ge 1}$ is locally finite,
   and $\sum_{k \ge 1} \vh_k=1$. 
   For instance, let $h : \R \to \R$ be a nonnegative definable function of class $C^p$ 
   such that $\on{supp} h = [-3/4,3/4]$
   and set $\ps_1(x) := h(|x|^2)$ and
   $\ps_k(x) := h(|x| - (k-1))$, for $k \ge 2$. Then $\ps := \sum_{k \ge 1} \ps_k$ is of class $C^p$ and everywhere positive 
   (locally it is a finite sum).
   Thus $\vh_k := \ps_k/\ps$ is as required; it is definable, since in a neighborhood of $\on{supp} \vh_k = \on{supp} \ps_k$ the denominator $\ps$ is represented by a finite sum of 
   definable functions.

   Let $(F_a)_{a \in A}$ be a definable bounded family of Whitney jets of class $C^{m,\om}_{\on{loc}}$ on $(E_a)_{a \in A}$.
   For each $k \ge 1$, $(F_a|_{\ol U_k})_{a \in A}$ is a definable bounded family of 
   Whitney jets of class $C^{m,\om}$ on $(E_a \cap \ol U_k)_{a \in A}$.
   By \Cref{thm:mainuniform}, 
   there exists a definable bounded family $(f^k_a)_{a \in A}$ of 
   $C^{m,\om}$-extensions to $\R^n$ of $(F_a|_{\ol U_k})_{a \in A}$ 
   such that $f^k_a$ is of class $C^p$ outside $E_a \cap \ol U_k$ for all $a \in A$; 
   if $E_a \cap \ol U_k = \emptyset$ we set $f^k_a := 0$.
   Let $f_a := \sum_{k=1}^\infty \vh_k f_a^k$, for $a \in A$.
   The function $f_a$ is of class $C^{m,\om}_{\on{loc}}$ on $\R^n$ and $C^p$ outside $E_a$, 
   since the defining sum is finite on every compact set and $p \ge m+1$.
   Let $x \in E_a$. There exist a neighborhood $U$ of $x$ and $k \ge 1$ such that $U \subseteq U_{k} \cup U_{k+1} \cup U_{k+2}$ 
   and $U \cap \bigcup_{\ell \not\in \{k,k+1,k+2\} } U_\ell = \emptyset$.
   So, for each $\ka \in \N^n$ with $|\ka| \le m$,
   \begin{align*}
       \p^\ka f_a(x)  &= \sum_{i=0}^2 \sum_{\si \le \ka} \binom{\ka}{\si} \p^\si \vh_{k+i}(x) \p^{\ka-\si} f_a^{k+i}(x)
       \\
       &= \sum_{i=0}^2 \sum_{\si \le \ka} \binom{\ka}{\si} \p^\si \vh_{k+i}(x)  F_a^{\ka-\si}(x)
       \\
       &=  \sum_{\si \le \ka} \binom{\ka}{\si} \p^\si\Big(\sum_{i=0}^2  \vh_{k+i}(x)\Big)  F_a^{\ka-\si}(x)
       \\
       &= \sum_{\si \le \ka} \binom{\ka}{\si} \p^\si (1)  F_a^{\ka-\si}(x)
       \\
       &= F^\ka_a(x).
   \end{align*}
   Thus, $f_a$ is a $C^{m,\om}_{\on{loc}}$-extension to $\R^n$ of $F_a$.

   Fix a definable relatively compact subset $V \subseteq \R^n$. 
   There exists $K \in \N$ such that $\ol V \cap U_k= \emptyset$ for all $k > K$. 
   In particular, $f_a(x) := \sum_{k=1}^K \vh_k(x) f_a^k(x)$, for all $x \in V$ and $a \in A$.
   Hence, $(f_a|_V)_{a \in A}$ is a definable bounded family of $C^{m,\om}$-functions.
\end{proof}

\begin{remark} \label{rem:globaldef}
    We do not say that $f_a$ is definable as a global function $f_a : \R^n \to \R$, 
    because the gluing argument (based on the partition of unity) involves an infinite sum.
\end{remark}

\subsection{Dependence on the modulus of continuity}
\label{sec:omdep}

The main result, \Cref{thm:mainuniform}, only depends in a weak sense on the modulus of continuity $\om$, 
namely, 
the uniform constant $C$ occasionally must be multiplied by $\om(1)$ or by $\om(1)^{-1}$;
see \eqref{eq:omub}, \eqref{eq:divom1}, \eqref{eq:adapted}, and \eqref{eq:Step2flat}.

Thus, we can allow in \Cref{thm:mainuniform} that, for each $a \in A$, 
$F_a$ is a Whitney jet of class $C^{m,\om_a}$ on $E_a$, where $\om_a$ is a modulus of continuity 
and there is a constant $C>0$ independent of $a \in A$ 
such that 
\begin{equation} \label{eq:uniformom}
    C^{-1} \le \om_a(1) \le C,\quad a \in A.
\end{equation}
Then the statement is the following:

\begin{theorem} \label{thm:omdep}
    Let $0 \le m\le p$ be integers. Let $(\om_a)_{a \in A}$ be a family of moduli of continuity satisfying
    \eqref{eq:uniformom}.
    Let $(E_a)_{a \in A}$ be a definable family of closed subsets $E_a$ of $\R^n$.
    For any definable family $(F_a)_{a \in A}$ of Whitney jets $F_a$ of class $C^{m,\om_a}$ on $E_a$ such that 
    \begin{align}
        &\sup_{a \in A} \sup_{x \in E_a} \sup_{|\ga|\le m} |F_a^\ga(x)| < \infty,
        \intertext{and}
        &\sup_{a \in A} \sup_{x\ne y \in E_a} \sup_{|\ga|\le m} \frac{|(R^m_x F_a)^\ga (y)|}{\om_a(|x-y|) |x-y|^{m-|\ga|}} < \infty,
    \end{align}
    there exists a definable family $(f_a)_{a \in A}$ of $C^{m,\om_a}$-extensions $f_a$ to $\R^n$ of $F_a$
    such that $f_a$ is of class $C^p$ outside $E_a$, for all $a \in A$, and 
    \begin{equation}
        \sup_{a \in A} \|f_a\|_{C^{m,\om_a}(\R^n)} < \infty.
    \end{equation}
\end{theorem}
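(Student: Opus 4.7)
The plan is to retrace the proof of \Cref{thm:mainuniform} with $\om$ replaced throughout by the family $(\om_a)_{a \in A}$ and verify that \eqref{eq:uniformom} suffices to keep every estimate uniform in $a$. The geometric machinery from \Cref{sec:strat}---the Gromov-type inequalities, uniform $\La_p$-stratifications, and their corollaries---produces constants depending only on $n,m,p$ and on the $\La_p$-regularity constants, never on the modulus of continuity itself, and is applied for each fixed $a \in A$ with its own modulus $\om_a$. Likewise, \Cref{prop:uniformpullback} and \Cref{prop:uniformPawlucki39} are stated pointwise in $a$ and propagate $\om$-bounds without assuming any relation among different $\om_a$'s. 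The pullback and cutoff constructions in Steps 1--3 of the proof of \Cref{prop:afterred} therefore go through verbatim with $\om_a$ in place of $\om$.

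The only places where uniformity across $a$ might fail are those flagged in \Cref{sec:omdep}: estimates \eqref{eq:omub} and \eqref{eq:divom1} in the base case $k=0$, the passage to \eqref{eq:adapted} in Step 1, and \eqref{eq:Step2flat} in Step 2. In each of these, one converts a globally bounded quantity---controlled by \eqref{eq:uniformW1}---into an $\om$-weighted quantity of the form $\om(r)\,r^{m-|\ka|}$ with $r \approx 1$, or vice versa. These conversions introduce the factors $\om_a(1)$ or $\om_a(1)^{-1}$, and the hypothesis $C^{-1} \le \om_a(1) \le C$ ensures that they remain bounded uniformly in $a \in A$. First I would verify each of the four flagged estimates explicitly in the new setting; then, using these as drop-in replacements, the inductive structure $(\mathbf{I}_k)$ and the three-step reduction within the inductive step yield the desired family $(f_a)_{a \in A}$ with $\sup_{a \in A}\|f_a\|_{C^{m,\om_a}(\R^n)}<\infty$.

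The main obstacle will be bookkeeping rather than new ideas: one must ensure that no tacit use of the modulus of continuity elsewhere in the argument secretly requires $\om_a$-independent structural properties (such as a common lower bound on $\om_a(t)$ for small $t$, or an equicontinuity condition across the family). A careful pass through the proof shows that each inequality involving $\om$ is in fact invoked for a single fixed $a$, using only that $\om_a$ is positive, continuous, increasing, concave, and tends to $0$ at the origin; the only cross-family comparisons are those at the four flagged points, which are precisely what \eqref{eq:uniformom} is designed to control. A secondary point to check is that definability is preserved, but this is automatic since the extension construction in the proof of \Cref{thm:mainuniform} does not involve $\om$ at all---the modulus of continuity enters only as a bookkeeping device for the estimates, not in the definition of $f_a$. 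With these verifications complete, the theorem follows.
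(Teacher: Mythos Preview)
Your proposal is correct and is essentially the same approach as the paper's: the paper states \Cref{thm:omdep} as an immediate consequence of the observation (made in the paragraph preceding it) that the proof of \Cref{thm:mainuniform} depends on $\om$ only through the factors $\om(1)$ and $\om(1)^{-1}$ at the four flagged places \eqref{eq:omub}, \eqref{eq:divom1}, \eqref{eq:adapted}, \eqref{eq:Step2flat}, all of which are controlled by \eqref{eq:uniformom}. Your write-up in fact supplies more detail than the paper does, in particular the explicit remarks that the geometric machinery of \Cref{sec:strat} is $\om$-independent and that definability is preserved because $\om$ never enters the construction of $f_a$.
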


\subsection{Proof of \Cref{thm:Cmbd}} \label{sec:proofCm}

Let $(F_a)_{a \in A}$ be a definable family of Whitney jets of class $C^{m}$ on $(E_a)_{a \in A}$, 
where $E_a \subseteq \R^n$ is compact.
We say that the family $(F_a)_{a \in A}$ 
is \emph{bounded} if 
\begin{align}
    &\sup_{a \in A} \sup_{x \in E_a} \sup_{|\ga|\le m} |F^\ga_a(x)| < \infty 
    \intertext{and}
    &\sup_{a \in A} \sup_{x\ne y \in E_a} \sup_{|\ga|\le m} \frac{|(R^m_x F_a)^\ga (y)|}{|x-y|^{m-|\ga|}} < \infty.
    \label{eq:Cmbd}
\end{align}

\begin{proof}[Proof of \Cref{thm:Cmbd}]
    We modify slightly an argument used in \cite[Proposition IV.1.5]{Tougeron72}.
    For each $a \in A$, consider 
    \[
        \si_a(t) := \sup_{\substack{x\ne y \in E_a\\ |x-y|\le t}} \sup_{|\ga|\le m} \frac{|(R^m_x F_a)^\ga (y)|}{|x-y|^{m-|\ga|}}, \quad t>0, \quad \si_a(0):=0.
    \]
    Then $\si_a : [0,\infty) \to [0,\infty)$ is an increasing function that is continuous at $0$ 
    and 
    \begin{equation*}
        \si_a(t) = \si_a(\on{diam} E_a), \quad t \ge \on{diam} E_a. 
    \end{equation*}
    Thus also $\ta_a :[0,\infty) \to [0,\infty)$, defined by 
    \[
        \ta_a(t) := 
        \begin{cases}
            \si_a(t) &\text{ if } t<1,
            \\
            \max\{1,\si_a(t)\} &\text{ if } t\ge 1,
        \end{cases}
    \]
    is increasing and continuous at $0$ with 
    \begin{equation} \label{eq:taafinite}
        \ta_a(t) \le \max\{1, \si_a(\on{diam} E_a) \}, \quad t \ge 0. 
    \end{equation}
    Let $\om_a$ be the least concave majorant of $\ta_a$ 
    which is finite, thanks to \eqref{eq:taafinite}. 
    Then $\om_a$ is a modulus of continuity 
    and 
    \[
        \sup_{a \in A} \sup_{x\ne y \in E_a} \sup_{|\ga|\le m} \frac{|(R^m_x F_a)^\ga (y)|}{\om_a(|x-y|)|x-y|^{m-|\ga|}} \le 1.
    \]
    Moreover, $\om_a(1) \ge 1$ and, by \eqref{eq:taafinite}, 
    \begin{equation*}
        \om_a(t)  \le \max\{1,\si_a(\on{diam} E_a)\} \le C, \quad t \ge 0,
    \end{equation*}
    for a constant $C>0$ independent of $a \in A$, thanks to  \eqref{eq:Cmbd}.
    In particular, \eqref{eq:uniformom} is satisfied.

    Thus \Cref{thm:omdep} implies 
    that there is a definable family $(f_a)_{a \in A}$ 
    such that, for each $a \in A$, $f_a$ is a $C^{m,\om_a}$-extension to $\R^n$ of $F_a$, $C^p$ outside $E_a$, 
    and 
    \[
        \sup_{a \in A} \|f_a\|_{C^{m,\om_a}(\R^n)} < \infty.
    \]
    In particular, $(f_a)_{a \in A}$ is a bounded family of $C^m$-functions.
\end{proof}

\subsection{Definable Whitney jets as Lipschitz maps} \label{ssec:corrLip}

We end with a few observations on a definable version of a correspondence, 
due to Shvartsman \cite{Shvartsman:2008aa}, between Whitney jets of class 
$C^{m,\om}$ and certain Lipschitz maps. Here the notation follows closely the one of \cite{Shvartsman:2008aa}.

Let $\om$ be a modulus of continuity and $m$ a positive integer.
For $\al \in \N^n$ with $|\al|<m$ let $\ps_\al$ be the inverse of the (strictly increasing) function 
$s \mapsto s^{m-|\al|} \om(s)$ and put $\vh_\al := \om\o \ps_\al$.
For $|\al|=m$, set $\vh_\al(t) := t$.

Let $\cP_m$ denote the space of real polynomials of degree at most $m$ in $n$ variables.
For $T_i=(P_i,x_i) \in \cP_m \times \R^n$, $i=1,2$, 
define 
\begin{align*}
    \de_\om(T_1,T_2) := \max\Big\{\om(|x_1-x_2|), 
    \max_{\substack{|\al|\le m\\ i =1,2}} \vh_\al(|\p^\al(P_1-P_2)(x_i)|)\Big\}.
\end{align*}
Then we get a metric $d_\om$ on $\cP_m \times \R^n$ by setting
\[
    d_\om(T,T') := \inf \sum_{j=0}^{k-1} \de_\om(T_j,T_{j+1}),
\]
where the infimum is taken over all finite sequences $T=T_1,T_2,\ldots,T_k=T'$ in $\cP_m \times \R^n$. 
It turns out (see \cite[Theorem 2.1]{Shvartsman:2008aa}) that 
\[
    d_\om((P,x),(P',x')) \le \de_\om((P,x),(P',x')) \le d_\om((e^n P,x),(e^n P',x')).
\]
Let $\cT_{m,n}$ be the metric space $(\cP_m \times \R^n,d_\om)$. 
For a nonempty subset $X \subseteq \R^n$, we denote by $X_\om$ the metric space $(X,(x,y) \mapsto \om(|x-y|))$.
Let $\mathbf{Lip}(X_\om,\cT_{m,n})$ be the space of Lipschitz maps $T: x \mapsto (P_x,z_x)$ 
such that $\max_{|\al|\le m} \sup_{x \in X} |\p^\al P_x(x)| < \infty$, equipped with the norm
\begin{align*}
    \|T\|^*_{\on{LO}(X)} &:= \max_{|\al|\le m} \sup_{x \in X} |\p^\al P_x(x)| 
    \\
                    &\quad + \inf\{\la >0 : d_\om(\la^{-1}T(x),\la^{-1}T(y)) \le \om(|x-y|) 
                    \text{ for all } x,y \in X\},
\end{align*}
where $\la^{-1}T(x) := (\la^{-1}P_x,z_x)$.
Let $T^m_x f$ be the Taylor polynomial of order $m$ at $x$ of a $C^m$-function $f$.

Now let us recall a result of \cite{Shvartsman:2008aa}.

\begin{proposition}[{\cite[Propositions 1.9 and 2.8]{Shvartsman:2008aa}}]
    Let $X \subseteq \R^n$ be a closed set. 
    Given a family of polynomials $\{P_x \in \cP_m : x \in X\}$,
    there exists $f \in C^{m,\om}(\R^n)$ such that $T^m_x f = P_x$ for all $x \in X$
    if and only if the map $T : x \mapsto (P_x,x)$ belongs to $\mathbf{Lip}(X_\om,\cT_{m,n})$.
    We have 
    \begin{equation*}
        \inf\{\|f\|_{C^{m,\om}(\R^n)} : T^m_x f = P_x \text{ for all } x\in X \} \approx \|T\|^*_{\on{LO}(X)}
    \end{equation*}
    in the sense that either side is bounded by a constant $C(m,n)$ times the other side.  
    If, moreover, $T : x \mapsto (P_x,x)$ belongs to $\mathbf{Lip}(X_\om,\cT_{m,n})$, 
    then $T$ has an extension $\widetilde T : x \mapsto (\widetilde P_x,x)$ 
    in $\mathbf{Lip}(\R^n_\om,\cT_{m,n})$ satisfying
    \begin{equation*}
        \|\widetilde T\|^*_{\on{LO}(\R^n)} \le C(m,n)\, \|T\|^*_{\on{LO}(X)}.
    \end{equation*}
\end{proposition}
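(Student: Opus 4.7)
The plan is to identify the map $T:x\mapsto(P_x,x)$ with the $m$-jet $F=(F^\al)_{|\al|\le m}$ on $X$ defined by $F^\al(x):=\p^\al P_x(x)$; with this choice one automatically has $T^m_x F = P_x$, since $P_x\in\cP_m$. I would then translate the Lipschitz condition in the metric $d_\om$ on $\cT_{m,n}$ into the Whitney compatibility of $F$ for class $C^{m,\om}$, and invoke the classical $C^{m,\om}$-Whitney extension theorem \cite{Whitney34a,Glaeser:1958aa}.

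First, I would unpack the functions $\vh_\al$: since $\ps_\al$ inverts $s\mapsto s^{m-|\al|}\om(s)$ and $\vh_\al=\om\circ\ps_\al$, the inequality $\vh_\al(r)\le\om(d)$ is equivalent to $r\le d^{m-|\al|}\om(d)$. Hence $\de_\om((P_x,x),(P_y,y))\le\om(|x-y|)$ holds exactly when
\[
    |\p^\al(P_x-P_y)(z)|\le |x-y|^{m-|\al|}\om(|x-y|) \quad \text{for } z\in\{x,y\},\ |\al|\le m,
\]
which, up to a constant $C(m,n)$, is precisely the Whitney compatibility condition for class $C^{m,\om}$ applied to $F$; compare \Cref{rem:equivW2}. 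The additional summand $\max_{|\al|\le m}\sup_{x\in X}|\p^\al P_x(x)|$ in $\|T\|^*_{\on{LO}(X)}$ accounts for the sup-norm bound \eqref{eq:W1}. Next, I would use the bracketing $d_\om((P,x),(P',x'))\le \de_\om((P,x),(P',x'))\le d_\om((e^nP,x),(e^nP',x'))$ recalled in the excerpt to connect the chain-infimum metric $d_\om$ to the pointwise $\de_\om$: the condition defining $\|T\|^*_{\on{LO}(X)}$ with $d_\om$ is then comparable, up to a $C(m,n)$ factor, to the same condition with $\de_\om$. Combined with the first step this yields that $T\in\mathbf{Lip}(X_\om,\cT_{m,n})$ if and only if $F$ is a Whitney jet of class $C^{m,\om}$, with norms equivalent up to a factor depending only on $m$ and $n$.

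The forward direction of the proposition then follows from the classical $C^{m,\om}$-extension theorem applied to $F$: it yields $f\in C^{m,\om}(\R^n)$ with $T^m_x f=P_x$ for all $x\in X$ and $\|f\|_{C^{m,\om}(\R^n)}\lesssim\|T\|^*_{\on{LO}(X)}$; the converse implication is immediate from $J^m_X(f)=F$. For the extension assertion, set $\widetilde T(x):=(T^m_x f,x)$ on $\R^n$; applying the correspondence of the previous paragraph to the Whitney jet $J^m_{\R^n}(f)$ shows $\widetilde T\in\mathbf{Lip}(\R^n_\om,\cT_{m,n})$ with $\|\widetilde T\|^*_{\on{LO}(\R^n)}\lesssim\|f\|_{C^{m,\om}(\R^n)}$, and $\widetilde T$ manifestly extends $T$. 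The main obstacle is the chain-infimum nature of $d_\om$: while $d_\om\le\de_\om$ is tautological, preventing shortcuts through long chains in $\cT_{m,n}$ requires the nontrivial inequality $\de_\om\le d_\om((e^nP,x),(e^nP',x'))$, which I would accept as a separate lemma from \cite[Theorem 2.1]{Shvartsman:2008aa} and which is exactly what makes the correspondence two-sided.
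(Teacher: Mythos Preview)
Your proposal is correct and matches what the paper does: the paper does not give its own proof of this proposition at all, but simply cites \cite[Propositions 1.9 and 2.8]{Shvartsman:2008aa} and remarks that ``these results are based on the classical extension theorem for Whitney jets of class $C^{m,\om}$.'' Your sketch---identifying $T$ with the jet $F^\al(x)=\p^\al P_x(x)$, unpacking $\vh_\al$ to see that $\de_\om\le\om(|x-y|)$ is exactly Whitney compatibility, bridging $d_\om$ and $\de_\om$ via the inequality from \cite[Theorem 2.1]{Shvartsman:2008aa}, and then invoking the classical $C^{m,\om}$-Whitney extension theorem---is precisely the content of Shvartsman's original argument and is consistent with the paper's one-line summary.
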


These results are based on the classical extension theorem for Whitney jets of class $C^{m,\om}$. 
As a consequence of \Cref{thm:main}, we may conclude the following definable version, where, 
provided that $X$ is definable,
$\mathbf{Lip}_{\on{def}}(X_\om,\cT_{m,n})$ is the subspace of definable maps $T : x \mapsto (P_x,z_x)$ in 
$\mathbf{Lip}(X_\om,\cT_{m,n})$, which means that $z_x$ and the coefficients of $P_x$ are definable maps in $x$.
Recall that $C^{m,\om}_{\on{def}}(\R^n)$ is the subspace of $C^{m,\om}(\R^n)$ consisting of all definable functions in   
$C^{m,\om}(\R^n)$.

\begin{proposition} \label{prop:Lip}
    Let $X \subseteq \R^n$ be a definable closed set. 
    Given a definable family of polynomials $\{P_x \in \cP_m : x \in X\}$,
    there exists $f \in C^{m,\om}_{\on{def}}(\R^n)$ such that $T^m_x f = P_x$ for all $x \in X$
    if and only if the map $T : x \mapsto (P_x,x)$ belongs to $\mathbf{Lip}_{\on{def}}(X_\om,\cT_{m,n})$.
    If, moreover, $T : x \mapsto (P_x,x)$ belongs to $\mathbf{Lip}_{\on{def}}(X_\om,\cT_{m,n})$, 
    then $T$ has an extension $\widetilde T : x \mapsto (\widetilde P_x,x)$ 
    in $\mathbf{Lip}_{\on{def}}(\R^n_\om,\cT_{m,n})$.
\end{proposition}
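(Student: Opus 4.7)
The plan is to combine Shvartsman's classical correspondence (quoted just before the statement) with the definable Whitney extension theorem \Cref{thm:main}, tracking definability through the dictionary between the two formulations. The substance of the proposition is already packaged in \Cref{thm:main}; what remains is bookkeeping.

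First I would set up the dictionary. To a family of polynomials $\{P_x \in \cP_m : x \in X\}$ I associate the $m$-jet $F = (F^\al)_{|\al|\le m}$ on $X$ defined by $F^\al(x) := \p^\al P_x(x)$. Since the coefficients of $P_x$ are polynomial expressions in the values $(F^\al(x))_{|\al|\le m}$ and vice versa, the family $\{P_x\}$ (equivalently the map $T : x \mapsto (P_x,x)$) is definable if and only if $F$ is a definable $m$-jet on $X$. Shvartsman's result (in its non-definable form) says that $T \in \mathbf{Lip}(X_\om,\cT_{m,n})$ if and only if $F$ is a Whitney jet of class $C^{m,\om}$ on $X$, and that in this case the existence of $f \in C^{m,\om}(\R^n)$ with $T^m_x f = P_x$ for $x \in X$ is automatic.

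The equivalence in the proposition now splits cleanly. For the forward direction, if $f \in C^{m,\om}_{\on{def}}(\R^n)$ satisfies $T^m_x f = P_x$ on $X$, then the coefficients of $P_x$ are definable in $x$ (since they are partial derivatives of the definable function $f$), so $T$ is definable, and $T \in \mathbf{Lip}(X_\om,\cT_{m,n})$ by Shvartsman; thus $T \in \mathbf{Lip}_{\on{def}}(X_\om,\cT_{m,n})$. Conversely, if $T \in \mathbf{Lip}_{\on{def}}(X_\om,\cT_{m,n})$, the associated jet $F$ is a definable Whitney jet of class $C^{m,\om}$ on $X$ (by the dictionary), so \Cref{thm:main} produces a definable $C^{m,\om}$-function $f$ on $\R^n$ with $J^m_X(f) = F$, which is equivalent to $T^m_x f = P_x$ for all $x \in X$. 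For the extension statement, I take the $f \in C^{m,\om}_{\on{def}}(\R^n)$ just produced and define $\widetilde T : \R^n \to \cP_m \times \R^n$ by $\widetilde T(x) := (T^m_x f, x)$, with $\widetilde P_x := T^m_x f$. Definability of $\widetilde T$ is inherited from $f$, the equality $\widetilde T|_X = T$ holds by construction, and applying the forward direction of the equivalence with $X$ replaced by $\R^n$ (and the family $\{T^m_x f\}_{x \in \R^n}$) yields $\widetilde T \in \mathbf{Lip}_{\on{def}}(\R^n_\om,\cT_{m,n})$.

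The main (and only) obstacle is the definable $C^{m,\om}$-extension of Whitney jets, which is precisely \Cref{thm:main}; everything else is a routine verification that definability is preserved by taking Taylor polynomials and reading off jet components, since both transitions are given by polynomial formulas in the defining data. Note that we do not attempt to reproduce Shvartsman's norm comparisons here, as \Cref{thm:main} is a qualitative statement; the quantitative version would require invoking \Cref{thm:mainuniform} instead, parameterizing over $T$.
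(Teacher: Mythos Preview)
Your proposal is correct and follows exactly the approach the paper intends: the paper presents \Cref{prop:Lip} as an immediate consequence of \Cref{thm:main} combined with Shvartsman's correspondence, without spelling out the dictionary, and you have simply made that dictionary explicit. The only minor redundancy is that in the forward direction you argue definability of $T$ from that of $f$, whereas definability of the family $\{P_x\}$ is already part of the hypothesis; but this does no harm.
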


Concerning the existence of uniform bounds for the norms, 
remarks similar to the ones in \cite[Section 4.4]{ParusinskiRainer:2023ab} apply.
But \Cref{thm:mainuniform} implies the following supplement.

\begin{proposition}
    Suppose that in the setting of \Cref{prop:Lip},
    the family of polynomials depends definably on additional parameters $a \in A$, 
    i.e., a definable family of polynomials $\{P^a_x \in \cP_m : x \in X,\, a \in A\}$ is given. 
    Then there exists a bounded family $(f^a)_{a \in A}$ of definable $C^{m,\om}$-functions $f^a : \R^n \to \R$ such that 
    \[
        T_x^m f^a = P^a_x\quad \text{ for all } x \in X \text{ and } a \in A 
    \]
    if and only if $(T^a : x \mapsto (P^a_x,x))_{a \in A}$ forms a bounded subset of $\mathbf{Lip}_{\on{def}}(X_\om,\cT_{m,n})$.
    If, moreover,  $(T^a : x \mapsto (P^a_x,x))_{a \in A}$ forms a bounded subset of $\mathbf{Lip}_{\on{def}}(X_\om,\cT_{m,n})$, 
    then there is a family $(\widetilde T^a : x \mapsto (\widetilde P^a_x,x))_{a \in A}$ of extensions $\widetilde T^a$ of 
    $T^a$ which forms a bounded subset of 
    $\mathbf{Lip}_{\on{def}}(\R^n_\om,\cT_{m,n})$.
\end{proposition}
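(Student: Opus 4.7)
The plan is to reduce this uniform statement to \Cref{thm:mainuniform} via the polynomial/jet correspondence underlying \Cref{prop:Lip}. For each $a \in A$, I would associate with $\{P^a_x\}_{x \in X}$ the $m$-jet $F_a = (F_a^\al)_{|\al|\le m}$ on $X$ defined by $F_a^\al(x) := \p^\al P^a_x(x)$. Since $P^a_x$ is recovered from $(F_a^\al(x))_{|\al|\le m}$ by Taylor expansion at $x$, definability of $\{P^a_x\}$ in $(a,x)$ is equivalent to definability of the family $(F_a)_{a \in A}$, and the requirement $T^m_x f^a = P^a_x$ for all $x \in X$ is just $J^m_X(f^a) = F_a$.

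The key step is to verify that $(T^a)_{a \in A}$ is a bounded subset of $\mathbf{Lip}_{\on{def}}(X_\om,\cT_{m,n})$ if and only if $(F_a)_{a \in A}$ is a definable bounded family of Whitney jets of class $C^{m,\om}$ on the constant family $E_a := X$. The supremum term of $\|T^a\|^*_{\on{LO}(X)}$ corresponds directly to \eqref{eq:uniformW1}. For the Lipschitz term, I would use that $\vh_\al$ is the inverse of $s \mapsto s^{m-|\al|}\om(s)$ to rearrange $\vh_\al(\la^{-1}|\p^\al(P^a_x - P^a_y)(z)|) \le \om(|x-y|)$, for $z \in \{x,y\}$, into $|\p^\al(P^a_x - P^a_y)(z)| \lesssim \la\, |x-y|^{m-|\al|}\om(|x-y|)$; combined with \Cref{rem:equivW2} this is equivalent to the uniform Whitney estimate \eqref{eq:uniformW2} for the remainder $(R^m_y F_a)^\al$, with constants depending only on $m$ and $n$.

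Given this translation, the forward direction follows immediately: a bounded family $(T^a)_{a \in A}$ in $\mathbf{Lip}_{\on{def}}(X_\om,\cT_{m,n})$ yields a definable bounded family of Whitney jets $(F_a)_{a \in A}$ of class $C^{m,\om}$ on $X$, and \Cref{thm:mainuniform} applied with the constant family $E_a \equiv X$ furnishes the desired definable bounded family $(f^a)_{a \in A}$ of $C^{m,\om}$-extensions. For the converse and the extension statement, I would set $\widetilde T^a(x) := (T^m_x f^a, x)$ for $x \in \R^n$ and apply the same equivalence in the reverse direction with $X$ replaced by $\R^n$ and $F_a$ replaced by $J^m_{\R^n}(f^a)$: uniform boundedness of $(f^a)$ in $C^{m,\om}(\R^n)$ translates back to uniform boundedness of $(\widetilde T^a)$ in $\mathbf{Lip}_{\on{def}}(\R^n_\om,\cT_{m,n})$. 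The main obstacle is isolating the fact that the constants in the equivalence between the $d_\om$-Lipschitz condition and the Whitney W2 estimate depend only on $m$ and $n$ (not on $\om$ or $a$); this is implicit in \cite{Shvartsman:2008aa} but must be extracted from the argument there.
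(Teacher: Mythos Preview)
Your proposal is correct and matches the paper's approach: the paper states this proposition without proof, merely indicating that ``\Cref{thm:mainuniform} implies the following supplement,'' and your reduction via the jet/polynomial correspondence $F_a^\al(x) = \p^\al P^a_x(x)$ together with the uniform constants $C(m,n)$ in Shvartsman's equivalence is precisely what is intended. One small slip: it is $\ps_\al$, not $\vh_\al$, that inverts $s \mapsto s^{m-|\al|}\om(s)$ (with $\vh_\al = \om \o \ps_\al$), but your subsequent rearrangement is nonetheless correct.
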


\subsection*{Acknowledgements}
This research was funded in whole or in part by the Austrian Science Fund (FWF) DOI 10.55776/P32905.
For open access purposes, the author has applied a CC BY public copyright license to any author-accepted manuscript version arising from this submission.
A large part of this work has been done at Mathematisches Forschungsinstitut Oberwolfach 
(\emph{Oberwolfach Research Fellows (OWRF) ID 2244p, October 31 - November 19, 2022}).
We are grateful for the support and the excellent working conditions.


\def\cprime{$'$}
\providecommand{\bysame}{\leavevmode\hbox to3em{\hrulefill}\thinspace}
\providecommand{\MR}{\relax\ifhmode\unskip\space\fi MR }
\providecommand{\MRhref}[2]{%
  \href{http://www.ams.org/mathscinet-getitem?mr=#1}{#2}
}
\providecommand{\href}[2]{#2}

\end{document}